\documentclass[12pt]{amsart}
\usepackage{amsmath,amsfonts,amssymb,amsthm}
\usepackage{thmtools}
\usepackage{graphicx,color}

\usepackage{forest}
\usepackage{float}

\usepackage[hypertexnames = false]{hyperref}

\usepackage{cleveref}

\newtheorem{theorem}{Theorem}[section]
\newtheorem{definition}[theorem]{Definition}

\newtheorem{lemma}[theorem]{Lemma}
\newtheorem{corollary}[theorem]{Corollary}
\newtheorem{proposition}[theorem]{Proposition}

\newtheorem{remark}[theorem]{Remark}
\newtheorem{example}[theorem]{Example}

\newcommand{\N}{\mathbb{N}}
\newcommand{\Z}{\mathbb{Z}}
\newcommand{\R}{\mathbb{R}}
\newcommand{\Q}{\mathbb{Q}}
\newcommand{\C}{\mathbb{C}}

\newcommand{\abs}[1]{\left\lvert#1\right\rvert}

\def\Diff{\operatorname{Diff}}
\def\Leb{\operatorname{Leb}}
\def\Disc{\operatorname{Disc}}

\title[Complex Lagrange spectrum with real part 1/2]{Lagrange spectrum for Diophantine approximations of complex numbers with real part equal to one half}

\author[H. Cheng]{Hao Cheng}
\address{IMPA, Estrada Dona Castorina 110, 22460-320, Rio de Janeiro, Brazil}
\email{hao.cheng@impa.br}
\thanks{The first author was partially supported by CNPq.}

\author[H. Erazo]{Harold Erazo}
\address{SUSTech International Center for Mathematics, Shenzhen, Guangdong, People’s Republic of China}
\email{harold.eraz@gmail.com}
\thanks{The second author was partially supported by CAPES and FAPERJ}

\author[C. G. Moreira]{Carlos Gustavo Moreira}
\address{Carlos Gustavo Moreira: SUSTech International Center for Mathematics, Shenzhen, Guangdong, People’s Republic of China and IMPA, Estrada Dona Castorina 110, 22460-320, Rio de Janeiro, Brazil}
\email{gugu@impa.br}
\thanks{The third author was partially supported by CNPq and FAPERJ}

\author[T. Vasconcelos]{Thiago Vasconcelos}
\address{IMPA, Estrada Dona Castorina 110, 22460-320, Rio de Janeiro, Brazil}
\email{thiago.greco@impa.br}
\thanks{The fourth author was partially supported by CNPq}

\date{\today}

\subjclass[2020]{11J06, 11J70, 28A80}  

\date{\today}

\keywords{Fractal geometry, Dynamical Markov and Lagrange spectra, Regular Cantor sets, Hyperbolic Dynamics, Diophantine Approximation}

\begin{document}

\begin{abstract}
    Motivated by a theorem of A. Schmidt on the part of the complex Lagrange spectrum below $2$, we study the restricted Lagrange spectrum $L_{\frac{1}{2}+i\R}$ arising from the approximation of complex numbers of the form $\frac{1}{2}+i\alpha$, $\alpha\in\R\setminus\Q$ by Gaussian rationals $p/q$ with $p,q\in\Z[i]$, $q\neq 0$. We show that this spectrum admits a description in terms of a dynamical spectrum associated with a real horseshoe. As a consequence, we obtain several fractal properties of $L_{\frac{1}{2}+i\R}$, such as continuity of the dimension function $t\mapsto\dim_H(L_{\frac{1}{2}+i\R}\cap(-\infty,t))$.  

    We also prove that the set of complex numbers $z$ satisfying
    \begin{equation*}
        \abs{z-\frac{p}{q}}\geq\frac{1}{2|q|^2}, \quad\text{for all } p,q\in\Z[i], q\neq 0,
    \end{equation*}
    is uncountable. In fact, we show that this inequality holds for every complex number of the form $z=\frac{1}{2}(1+i\theta)$ where $\theta\in\R\setminus\Q$ is a root of one of Schmidt's $C$-minimal forms. 
\end{abstract}

\maketitle

\section{Introduction}

One of the earliest systematic studies of Diophantine approximation in the complex plane by elements of $\Q(i)=\{a+bi:a,b\in\Q\}$ was carried out by Adolf Hurwitz through his theory of complex continued fractions \cite{Hur}. He used an analogue of the continued fraction expansion for complex numbers which are now called Hurwitz continued fractions. More details on this algorithm can be found in \cite{Bug}. Hurwitz proved the following version of Dirichlet's theorem for Diophantine approximations of complex numbers:

\begin{theorem}[Hurwitz]\label{thm:Hurwitz}
    Let $\xi\in \C\setminus \Q(i)$. Then the inequality $|\xi-\frac{p}{q}|<\frac{1}{|q|^2}$ has infinitely many solutions $(p,q)\in\Z[i]\times\Z[i]$ with $q\neq 0$.
\end{theorem}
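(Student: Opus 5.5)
We will deduce the theorem from Minkowski's theorem on convex bodies applied to the lattice $\Z[i]^2\cong\Z^4$ inside $\C^2\cong\R^4$. The alternative, running Hurwitz's continued fraction algorithm and controlling its convergents, is avoided here. Fix $\xi\in\C\setminus\Q(i)$ and a parameter $Q\ge 1$. Consider the set
\begin{equation*}
K_Q=\{(p,q)\in\C^2:\ |q\xi-p|\le c/Q,\ |q|\le Q\},
\end{equation*}
where $c>0$ is a constant to be fixed. This is a product of two Euclidean discs in $\C\cong\R^2$ after the linear change of variables $(p,q)\mapsto(q\xi-p,q)$, which has determinant $1$ as a real map. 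So $K_Q$ is compact, convex and symmetric, with $\mathrm{vol}(K_Q)=\pi (c/Q)^2\cdot\pi Q^2=\pi^2c^2$. The lattice $\Z[i]^2$ has covolume $1$ in $\R^4$, so Minkowski's theorem (compact version) produces a nonzero lattice point in $K_Q$ as soon as $\pi^2c^2\ge 2^4$, i.e.\ $c=4/\pi$.

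This lattice point $(p,q)$ has $q\ne0$. Otherwise $|p|\le c/Q$, which for $Q>c$ forces $p=0$ because $p\in\Z[i]$ is nonzero only when $|p|\ge1$. We then obtain
\begin{equation*}
\Bigl|\xi-\frac pq\Bigr|\le\frac{c}{|q|Q}\le\frac{c}{|q|^2},
\end{equation*}
with constant $c=4/\pi>1$. This gives Dirichlet-type infinitude: as $Q\to\infty$ the quantity $|q\xi-p|\to0$. It is never zero since $\xi\notin\Q(i)$, so infinitely many distinct fractions occur. However, the constant is too weak for the stated inequality.

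The main obstacle is improving the constant from $4/\pi$ to $1$. A better convex body is the first thing to try. Replace the discs by squares: use the sup-norm on real and imaginary parts, $\max(|\Re w|,|\Im w|)\le a$ with $w=q\xi-p$, together with $\max(|\Re q|,|\Im q|)\le b$. The volume is then $16a^2b^2$, so $ab=1$ suffices. Since $|w|\le\sqrt2 a$ and $|q|\le\sqrt2 b$, this gives $|q||q\xi-p|\le 2ab=2$, which is worse. Mixing shapes is better: take $|w|\le a$ as a disc but $q$ in a square with $|q|_\infty\le b$. The volume is $\pi a^2\cdot4b^2$, needing $ab=2/\sqrt\pi$, and then $|q||w|\le\sqrt2\cdot2/\sqrt\pi\approx1.6$, also not good enough. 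The geometry of numbers alone therefore seems to give only a constant around $4/\pi$.

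So in the end I would revert to Hurwitz's original route for the sharp constant $1$. Expand $\xi$ by the nearest-Gaussian-integer algorithm: $\xi_0=\xi$, $a_n=[\xi_n]$ the nearest element of $\Z[i]$, and $\xi_{n+1}=1/(\xi_n-a_n)$. Here $|\xi_n-a_n|\le1/\sqrt2$, so $|\xi_{n+1}|\ge\sqrt2$. Form the convergents $p_n/q_n$ via the usual recursions. Then
\begin{equation*}
\xi-\frac{p_n}{q_n}=\frac{\pm1}{q_n(q_n\xi_{n+1}+q_{n-1})},
\end{equation*}
and it suffices to show $|\xi_{n+1}+q_{n-1}/q_n|>1$. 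The key step, and the real difficulty, is Hurwitz's lemma that $|q_n|$ is strictly increasing, i.e.\ $|q_{n-1}/q_n|<1$. One proves this by induction by tracking the region in which $q_{n-1}/q_n$ lies. That region is constrained by $|\xi_{n+1}|\ge\sqrt2$ and by the excluded discs coming from the nearest-integer choice. Combining it with $|\xi_{n+1}|\ge\sqrt2$ and the geometry of the fundamental square then gives the strict inequality $|\xi_{n+1}+q_{n-1}/q_n|>1$. Infinitude holds because $\xi\notin\Q(i)$ makes the expansion infinite.
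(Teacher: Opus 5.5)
The paper gives no proof of this statement; it is quoted from Hurwitz~\cite{Hur}. So your proposal has to stand on its own, and as written it does not. The route you finally commit to stops exactly at the step you yourself call the real difficulty.

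\textbf{The gap in the continued-fraction route.} From $|\xi_{n+1}|\ge\sqrt2$ and $|q_{n-1}/q_n|<1$, the triangle inequality only gives $|\xi_{n+1}+q_{n-1}/q_n|>\sqrt2-1$. No appeal to ``the geometry of the fundamental square'' can do better while these two constraints are used separately. Indeed, $\xi_{n+1}$ can be arbitrarily close to $1+i$ (when $\xi_n-a_n$ is near the corner $(1-i)/2$). Also $q_0/q_1=1/a_1=-(1+i)/2$ when $a_1=-1+i$. Combined, these would give $|\xi_{n+1}+q_{n-1}/q_n|\approx 1/\sqrt2<1$.

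What excludes this is a \emph{joint} constraint through the digit $a_n$. Since $\xi_n=a_n+1/\xi_{n+1}$ must itself avoid the open unit discs about $\pm1,\pm i$, the digit $a_n=-1+i$ forces $\Re\xi_{n+1}\le\frac12$ and $\Im\xi_{n+1}\le\frac12$. A proof along Hurwitz's lines therefore needs a case analysis over the small digits (those whose cylinders are not mapped onto the whole square), coupled with the region of $q_{n-1}/q_n$ given $a_n$. The strict monotonicity of $|q_n|$ also has to be proved, not just described. None of this is in the proposal.

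\textbf{A fix within your first approach.} You can avoid all of this, because your conclusion that the geometry of numbers only yields a constant near $4/\pi$ is wrong; the problem is the choice of body. Take instead
$E_h=\{(p,q)\in\C^2:\ |q\xi-p|^2/h+h|q|^2\le R^2\}$ with $h>0$.
\begin{itemize}
\item Apply your determinant-one substitution, then $(w,q)\mapsto(w/\sqrt h,\sqrt h\,q)$, which also has determinant one. This turns $E_h$ into the Euclidean ball of radius $R$ in $\R^4$, of volume $\pi^2R^4/2$, so Minkowski applies with $R^2=4\sqrt2/\pi$.
\item By AM--GM, a nonzero lattice point in $E_h$ satisfies $2|q|\,|q\xi-p|\le R^2$, i.e.\ $|q|\,|q\xi-p|\le 2\sqrt2/\pi<1$.
\item For $h<1/R^2$ one has $q\neq0$. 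Since $|q\xi-p|^2\le R^2h\to0$, you get infinitely many distinct solutions exactly as in your first paragraph.
\end{itemize}
The ball does better because, for the same bound on $|q|\,|q\xi-p|$, it has twice the volume of the product of discs. Reduction of binary Hermitian forms over $\Z[i]$ (Hermite constant $\sqrt2$) would even give the constant $1/\sqrt2$, but $2\sqrt2/\pi$ already suffices.
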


\Cref{thm:Hurwitz} naturally leads to the study of the quality of approximation of complex irrational numbers by Gaussian rationals. For $\xi\in\Q(i)$, define its \emph{Lagrange constant} by
\begin{equation*}
    k(\xi):=\underset{p,q\in\Z\left[i\right]}{\underset{|p|,|q|\rightarrow \infty}{\limsup}}(|q||q\xi-p|)^{-1}.
\end{equation*}
The \emph{complex Lagrange spectrum}, also known as the \emph{Hurwitz spectrum}, is then defined by
\begin{equation*}
    L_{\Q(i)}:=\{k(\xi):\xi\in \C\setminus \Q(i), k(\xi)<\infty\}.
\end{equation*}

The first result on this spectrum was obtained by Ford \cite{Ford}, who improved \Cref{thm:Hurwitz} by proving that the minimum of $L_{\Q(i)}$ is $\sqrt{3}=k(\frac{1}{2}+i\frac{\sqrt{3}}{2})$. However, the study of $L_{\Q(i)}$ via Hurwitz continued fractions is considerably more delicate than in the real case, since Hurwitz continued fractions lack several of the good properties enjoyed by regular continued fractions. In order to investigate the fine structure of $L_{\Q(i)}$, A. L. Schmidt \cite{Sch} introduced a different continued fraction algorithm based on Farey sets.

Using this approach, Schmidt established in \cite[Theorem 6.7]{Sch} an analogue of Markov's theorem for the complex Lagrange spectrum. He completely characterized the initial portion of $L_{\Q(i)}$ up to its first accumulation point. Indeed, he showed that $L_{\Q(i)}\cap(-\infty,2)$ is a discrete set whose elements can be described in terms of the Vulakh's Diophantine equation $y_1^2+y_2^2+2x^2=4y_1y_2x$, whose solutions can be organized in a ternary tree, analogous to the binary tree of solutions of Markov's equation $x^2+y^2+z^2=3xyz$. In particular, $2$ is the first accumulation point of $L_{\Q(i)}$. On the other hand, he also proved that there are uncountably many equivalence classes of complex numbers $\xi$ satisfying $k(\xi)=2$. 

We say that two complex numbers $\xi$ and $\eta$ are \emph{equivalent} if there are Gaussian integers $a,b,c,d\in\Z[i]$ with $ad-bc\in\{\pm 1,\pm i\}$ such that $\eta=(a\xi +b)/(c\xi +d)$. Notice that equivalent numbers have the same Lagrange constant, i.e., $k(\xi)=k(\eta)$.

\begin{theorem}[A. Schmidt]\label{thm:schmidt}
Let $\mathcal{N}(\Lambda):=\{1, 5, 29, 65, 169, 349,\dots\}$ be the sequence of positive integers arising as the values of $x$ in the Diophantine equation $y_1^2+y_2^2+2x^2=4y_1y_2x$. For each $\Lambda\in\mathcal{N}(\Lambda)$, there exists a complex quadratic irrational $\xi_{\Lambda}\in\Q(i\sqrt{4\Lambda^2-1})$ such that the following holds. If $\xi\in \C\setminus \Q(i)$ satisfies $k(\xi)<2$, then either
\begin{itemize}
    \item $k(\xi)=\sqrt{\frac{3}{5}\sqrt{41}}$, in which case $\xi$ is equivalent to one of the four numbers $\frac{1}{6}(3\pm\sqrt{-15\pm 12i})$;
    \item or $\xi$ is equivalent to $\xi_\Lambda$ for some $\Lambda\in\mathcal{N}(\Lambda)$.
\end{itemize}
Moreover, each $\xi_\Lambda$ is equivalent to a number of the form $\frac{1}{2}(1+i\theta_\Lambda)$, where $\theta_\Lambda\in\R\setminus\Q$ is a root of a form $g_\Lambda$ defined on \eqref{eq:forms} and $k(\xi_\Lambda)=\sqrt{4-\Lambda^{-2}}$. Consequently,
\begin{align*}
    L_{\Q(i)}\cap (-\infty,2)&=\left\{\sqrt{4-\Lambda^{-2}}:\Lambda\in\mathcal{N}(\Lambda)\right\}\cup\left\{\sqrt{\frac{3}{5}\sqrt{41}}\right\}
\end{align*}
\end{theorem}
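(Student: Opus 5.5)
This is Schmidt's theorem, so the natural plan is to follow the structure of a proof of Markov's theorem, transported to $\mathrm{PSL}_2(\Z[i])$ acting on binary Hermitian-type forms. The quantity $k(\xi)$ will be reformulated as a minimum of an indefinite binary quadratic form. Concretely, for $\xi$ with conjugate data $(\xi,\eta)$ one considers $f(x,y)=(x-\xi y)(x-\eta y)$ over $\Z[i]$ and obtains
\[
k(\xi)^{-1}\approx \liminf \frac{|f(p,q)|}{\sqrt{|\Disc f|}}.
\]
The condition $k(\xi)<2$ then becomes the condition that $f$ is "$C$-minimal": every $\Z[i]$-equivalent form $f'=ax^2+bxy+cy^2$ satisfies $|a|>\frac12\sqrt{|\Disc f|}$ along the relevant tail.

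The first step is to run Schmidt's Farey-set continued fraction, which uses the action of $\mathrm{PSL}_2(\Z[i])$ on the Farey tessellation of hyperbolic 3-space by ideal octahedra. This encodes $\xi$ by a sequence of generators. The local condition "no approximation better than $1/(2|q|^2)$" is then translated into forbidden finite patterns, exactly as forbidding the blocks $1$, $121$, $212$, etc. in the real case yields $L\cap(-\infty,3)$. Each forbidden block is checked by an explicit finite computation. Here one expects that every admissible sequence with $k<2$ eventually lies in a "real" subsystem: after equivalence, $\xi=\frac12(1+i\theta)$ with $\theta$ real. The key geometric fact is that the vertical line $\Re z=\frac12$ is invariant under a reflection normalizing $\mathrm{PSL}_2(\Z[i])$. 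There is a single exceptional branch, which leads to the period of $\frac16(3\pm\sqrt{-15\pm12i})$ and the value $\sqrt{\frac35\sqrt{41}}$; it is isolated by case analysis.

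On the real line, the problem reduces to approximating $\theta$ by a restricted set of rationals. The form $g$ for $\theta$ now has integer coefficients, and the constraint becomes a Markov-type minimality problem for real forms with a modified lattice. I would then mimic the Cohn/Bombieri proof of Markov's theorem. Minimal periodic words are generated by a substitution tree, and triples of periods correspond to solutions $(y_1,y_2,x)$ of $y_1^2+y_2^2+2x^2=4y_1y_2x$. Vieta jumping, $y_1\mapsto 4y_2x-y_1$ and $x\mapsto 2y_1y_2-x$, generates the ternary tree. For each node $\Lambda$ one computes the trace of the associated matrix and the discriminant $4\Lambda^2-1$, which gives $\xi_\Lambda\in\Q(i\sqrt{4\Lambda^2-1})$ and $k(\xi_\Lambda)=\sqrt{4-\Lambda^{-2}}$ through the form $g_\Lambda$ of \eqref{eq:forms}. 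Conversely, one must show that any non-periodic admissible sequence has $k\ge2$; this follows by a limiting argument from the fact that the values $\sqrt{4-\Lambda^{-2}}$ increase to $2$ along the tree.

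The main obstacle is the combinatorial classification: proving that the admissible sequences with $k<2$ are exactly the periodic Christoffel-like words attached to the tree, and nothing else. My first attempt would be an inductive "renormalization" argument in the style of Bombieri. Show that the allowed blocks are two words $A$ and $B$, that the forbidden patterns force a sequence in $\{A,B\}$ to be balanced, and that the substitution $A\mapsto A$, $B\mapsto AB$ (and its ternary analogues) preserves the constraint. One then recurses to the Vulakh tree.
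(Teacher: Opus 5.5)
The paper does not prove this theorem; it quotes it from Schmidt \cite[Theorem 6.7]{Sch}. Measured against what a complete proof has to contain, your proposal has genuine gaps. It is a plan in which the two load-bearing steps are stated as expectations, and one of the reductions is set up so that it would give the wrong answer below $2$.

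\textbf{The reduction to the line $\Re z=\frac12$ is not proved.} You write that ``one expects'' every $\xi$ with $k(\xi)<2$ to be equivalent to some $\frac12+i\alpha$. The reason you give, invariance of the line under a reflection normalizing the group, cannot be the mechanism. The real axis is fixed by $z\mapsto\bar z$, which also normalizes $\mathrm{PSL}_2(\Z[i])$, yet every real irrational has $k\ge\sqrt5>2$, because real rationals are among the approximants. What is needed is an actual proof that $k<2$ forces Schmidt's expansion eventually into the configuration of a number $\frac12+i\alpha$, with exactly one exceptional periodic branch. The isolation of the class with value $\sqrt{\frac35\sqrt{41}}$ is likewise only announced.

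\textbf{On the line you discard too many approximants.} You reduce to approximating $\theta$ by a restricted set of rationals, i.e.\ you keep only $p/q$ with $\Re(p/q)=\frac12$ and so compute $k_{\frac12}$. This is legitimate only above $2$ (\Cref{lem:fundamental_lemma}, \Cref{cor:simple_cor}). Below $2$, fractions with $\Re(p/q)\ne\frac12$ satisfy only $|\xi-p/q|>\frac1{2|q|^2}$, and they can push $k$ up to exactly $2$. \Cref{rem:optimal_inequality} shows this happens for $\alpha=\frac12(\sqrt2-1)$, half a root of $h_1$: there $k_{\frac12}=\sqrt2$ but $k=2$.

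Consequently a Markov-type classification carried out purely on the line would admit the value $\sqrt2<\sqrt3=\min L_{\Q(i)}$. It would also fail to exclude the forms $h_M$, which satisfy the same two-class minimality as the $g_\Lambda$ (\Cref{lem:schmidt_59}) but are absent from the list. For the same reason, even the direct claim $k(\xi_\Lambda)=\sqrt{4-\Lambda^{-2}}$ needs a separate argument bounding the off-line approximants of $\frac12(1+i\theta_\Lambda)$. Computing the trace and discriminant of $g_\Lambda$ gives only the on-line constant.

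\textbf{The classification itself is only a first attempt.} You do not specify the alphabet, the forbidden blocks, or the three substitutions that would realize the ternary Vulakh tree. On the line the constraint is not on partial quotients alone: it lives on the skew product with $(q_{n-1},q_n)\bmod 4$ and involves the intermediate fractions $\frac{p_{n+1}-p_n}{q_{n+1}-q_n}$ and $\frac{p_n+p_{n-1}}{q_n+q_{n-1}}$ (\Cref{lem:formula_for_k}). So a two-letter balanced-word renormalization does not transfer as it stands.

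\textbf{The ``limiting argument'' for non-periodic sequences is invalid.} That the tree values $\sqrt{4-\Lambda^{-2}}$ increase to $2$ gives no lower bound for the value of a sequence outside the tree. At best, lower semicontinuity of a supremum of continuous heights bounds the value of a limit from above, not below. What must be shown is that $k<2$ forces eventual periodicity with a tree period, and that is precisely the missing classification.
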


The forms $g_\Lambda$ appearing in \Cref{thm:schmidt} are a family of binary quadratic rational forms introduced by Schmidt in \cite{Sch} and studied further in \cite{SchmidtCforms}. They play an important role in some of the results of this paper, and their definition and main properties will be recalled in \Cref{sec:binary_forms}.

Analogous to the real case, Schmidt also studied the infimum of complex binary quadratic forms. More precisely, given
\begin{equation*}
    f(x,y)=Ax^2+Bxy+Cy^2, \quad A,B,C\in\C, \quad B^2-4AC\neq 0,
\end{equation*}
he defined
\begin{equation*}
    m(f):=\frac{\sqrt{|B^2-4AC|}}{\inf_{p,q\in\Z[i]^2\setminus(0,0)}|f(p,q)|}.
\end{equation*}

The \emph{complex Markov spectrum} is the set
\begin{equation*}
    \mathcal{M}_{\Q(i)} = \left\{m(f):f(x,y)=Ax^2+Bxy+Cy^2,A,B,C\in\C,B^2-4AC\neq 0\right\}.
\end{equation*}

In \cite[Theorem 6.6]{Sch}, Schmidt characterized the initial portion of this spectrum and showed, in particular, that
\begin{equation*}
    L_{\Q(i)}\cap(-\infty,2)
    =
    \mathcal{M}_{\Q(i)}\cap(-\infty,2).
\end{equation*}
Moreover, he determined all forms giving rise to this discrete part of the complex Markov spectrum and showed that they can be constructed from the $C$--minimal forms $g_\Lambda$.

\subsection{Approximations of numbers whose real parts is one half}
In his proof of \Cref{thm:schmidt}, Schmidt observed that, with the exception of the value $\sqrt{\frac{3}{5}\sqrt{41}}$, every element of the discrete part of $L_{\Q(i)}$ arises as the approximation constant of a complex number of the form $\frac{1}{2}
+i\alpha$ with $\alpha\in \R\setminus \Q$. Moreover, by restricting his algorithm to complex numbers whose real part is $\frac{1}{2}$, he obtained a theory exhibiting striking similarities with the classical theory of regular continued fractions. Using Moreira's theory of dynamically defined Lagrange spectra \cite{LMV}, it will follow that the spectrum
\begin{equation*}
    L_{\frac{1}{2}+i\R}:=\left\{k\left(\frac{1}{2}+i\alpha\right):\alpha\in\R\setminus \Q, k\left(\frac{1}{2}+i\alpha\right)<\infty\right\},
\end{equation*}
can be realized as a dynamically defined Lagrange spectrum associated with a conservative horseshoe on a surface, exactly as in the real setting. 

Let $\xi=\frac{1}{2}+i\alpha$ with $\alpha\in \R\setminus \Q$. The restriction of Schmidt's algorithm for this class of numbers is related to what he calls the $C$-regular and $C$-dually regular continued fraction expansions. The $C$-regular expansion of $\alpha$ is the representation 
$$\alpha=2a_0-1+\frac{2\varepsilon_1}{2a_1+\frac{2}{2a_2+\frac{2\varepsilon_3}{2a_3+\frac{2}{2a_4+\frac{2\varepsilon_4}{\ddots}}}}},$$ 

where $a_n\in \N$ and $\varepsilon_{2n-1}\in \{1,-1\}$.
Similarly, the $C$-dually regular expansion represents $\alpha$ as $$\alpha=2b_0-1+\frac{2}{2b_1+\frac{2\varepsilon_2}{2b_2+\frac{2}{2a_3+\frac{2\varepsilon_4}{2a_4+\frac{2}{\ddots}}}}}.$$ 

Schmidt studied the approximation of $\xi$ by complex numbers of the form $
\frac{1}{2}(1+i\frac{p_n}{q_n})$, where the $\frac{p_n}{q_n}$ are convergents of the $C$-regular or $C$-dually regular continued fraction expansion of $\alpha$. We refer the reader to \cite[Section 4]{Sch} for the details of this construction.

In the present paper, we adopt a different approach. Since classical continued fractions are more familiar and better suited to our purposes, we shall work directly with the classical continued fraction expansion of $\alpha$. This will be done with the following lemma:

\begin{lemma}\label{lem:A}
Let $p,q\in \Z[i]$ and $\alpha\in \R\setminus\Q$. If 
\begin{equation*}
    \abs{\frac{1}{2}+i\alpha-\frac{p}{q}}\leq\frac{1}{2\abs{q}^2},
\end{equation*}
then $\Re(p/q)=1/2$. Moreover, for any such approximation with $\Re(p/q)=1/2$, if $(p_m/q_m)_{m\geq 0}$ are the convergents of the continued fraction of $\alpha$,  then one of the following holds for some $n\geq 0$:
\begin{itemize}
    \item $\Im(p/q)=\frac{p_{n+1}-p_n}{q_{n+1}-q_n}$ and $q_{n+1}-q_n\equiv 2\pmod{4}$; or
    \item $\Im(p/q)=\frac{p_{n}+p_{n-1}}{q_{n}+q_{n-1}}$ and $q_n+q_{n-1}\equiv 2\pmod{4}$; or
    \item $\Im(p/q)=\frac{p_n}{q_n}$.
\end{itemize}
\end{lemma}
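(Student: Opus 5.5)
The proof has two parts: first forcing $\Re(p/q)=1/2$ by an integrality argument, then translating the condition on $\Im(p/q)$ into a classical real Diophantine inequality for $\alpha$.

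\emph{Step 1: the real part.} Write $\xi=\frac12+i\alpha$ and suppose $\abs{\xi-\frac pq}\le\frac{1}{2\abs q^2}$. Since $\Im(p/q)\in\Q$ and $\alpha\notin\Q$, the imaginary part of $\xi-p/q$ is nonzero. Hence the real part satisfies the strict bound
\[
\abs{\Re(p/q)-\tfrac12}<\frac{1}{2\abs q^2}.
\]
Now $\Re(p/q)=\Re(p\bar q)/\abs q^2$, so the integer $N:=2\Re(p\bar q)-\abs q^2$ satisfies $\abs N<1$. Therefore $N=0$, that is, $\Re(p/q)=\frac12$. This part needs no reducedness assumption.

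\emph{Step 2: reducing to a real approximation.} Now assume $\Re(p/q)=\frac12$ and write $\Im(p/q)=r/s$ with $\gcd(r,s)=1$ and $s>0$. Then
\[
\frac pq=\frac{s+2ir}{2s}.
\]
We compute the smallest possible $\abs q^2$ by computing the Gaussian gcd of numerator and denominator, in three cases.
\begin{itemize}
\item If $s$ is odd, then $s+2ir$ is coprime both to $s$ and to $1+i$ (its norm $s^2+4r^2$ is odd). So $\abs q^2\ge 4s^2$.
\item If $4\mid s$, write $s=2t$; then $r$ is odd and $\frac pq=\frac{t+ir}{2t}$ with $t+ir$ of odd norm. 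So $\abs q^2\ge 4t^2=s^2$.
\item If $s\equiv2\pmod 4$, write $s=2t$ with $t$ and $r$ both odd. Then $t+ir$ is divisible by $1+i$ exactly once, since its norm is $\equiv2\pmod 4$. The reduced denominator is $2t/(1+i)$, so $\abs q^2\ge 2t^2=s^2/2$.
\end{itemize}

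\emph{Step 3: the two regimes.} Since $\abs{\xi-p/q}\ge\abs{\alpha-r/s}$, the hypothesis gives $\abs{\alpha-r/s}\le\frac1{2\abs q^2}$. In the cases $s$ odd or $4\mid s$ this yields $\abs{\alpha-r/s}\le\frac1{2s^2}$, which is strict by irrationality. By Legendre's theorem $r/s=p_n/q_n$ is a convergent, giving the third alternative. In the case $s\equiv2\pmod4$ we only get $\abs{\alpha-r/s}<\frac1{s^2}$, and here we need the classical theorem of Fatou/Grace. It says that a reduced fraction satisfying this inequality is either a convergent, or one of the intermediate fractions $\frac{p_{n+1}-p_n}{q_{n+1}-q_n}$ or $\frac{p_n+p_{n-1}}{q_n+q_{n-1}}$. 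In the last two cases the denominator equals $s\equiv2\pmod 4$, which is exactly the congruence condition in the statement. If $r/s$ turns out to be a convergent, we are again in the third alternative.

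\emph{Main obstacle.} The main obstacle is Step 3 in the case $s\equiv 2\pmod 4$, if the Fatou/Grace theorem is not simply cited. To prove it I would place $r/s$ in the Farey/Stern--Brocot tree. If $r/s$ is not a convergent, it is an intermediate fraction $\frac{kp_n+p_{n-1}}{kq_n+q_{n-1}}$ with $1\le k\le a_{n+1}-1$, or it lies off the path of $\alpha$. In the off-path case $\abs{\alpha-r/s}\ge\frac{1}{s(s+q)}$-type estimates already exceed $1/s^2$. For intermediate fractions, the distance from $\alpha$ is at least
\[
\frac{1}{(kq_n+q_{n-1})\,q_n}\cdot\bigl(\text{a factor depending on }k\bigr).
\]
Comparing this with $1/s^2$ will show that only $k=1$ or $k=a_{n+1}-1$ can satisfy the inequality, and these are precisely the two mediants listed in the statement. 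I would also verify that these two mediants are exactly $\frac{p_n+p_{n-1}}{q_n+q_{n-1}}$ and $\frac{p_{n+1}-p_n}{q_{n+1}-q_n}$ after reindexing $n$.
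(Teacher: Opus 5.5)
Your proof is correct and follows the paper's argument essentially step for step. The paper also forces $\Re(p/q)=\frac12$ by noting that $2\Re(p\bar q)-\abs{q}^2$ is an integer of absolute value less than $1$. It then does the same three-case computation of the reduced denominator according to $s \bmod 4$, applies Legendre's theorem when $s$ is odd or $4\mid s$, and uses the Fatou/Grace theorem (which the paper cites from Lang as a generalized Legendre theorem) when $s\equiv 2\pmod 4$.

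The one slip is in your optional sketch of Fatou's theorem. In the off-path case the bound you need is $\abs{\alpha-r/s}\ge 1/(s s')$, where $s'<s$ is the denominator of a Farey parent of $r/s$. The bound $1/(s(s+q))$ you wrote is smaller than $1/s^2$, so it would not suffice.
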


Schmidt proved an analogous statement using the convergents of the $C$-regular and $C$-dually regular continued fraction expansions. Together with his characterization of the discrete part of $L_{\frac{1}{2}+i\R}$, the above lemma shows that the non-discrete portion of the restricted spectrum $L_{\frac{1}{2}+i\R}\cap(2,\infty)$ can be studied through the classical continued fraction expansion of the imaginary part.

Throughout the paper, $\dim_H$ denotes Hausdorff dimension. We shall prove the following results concerning $L_{\frac{1}{2}+i\R}$.

\begin{theorem}\label{thm:main}
Let $d_0(t):=\dim_H\left(L_{\frac{1}{2}+i\R}\cap (-\infty,t)\right)$ for $t\in\R$. Then we have that: 
    \begin{enumerate}
        \item\label{itm:thmcont} The function $d_0$ is continuous.
        \item\label{itm:thmcont2} $d_0(t)=\overline{\dim_{B}}\left(L_{\frac{1}{2}+i\R}\cap (-\infty,t)\right)$ for $t\in\R$.
        \item $\max\{t\in\R:d_0(t)=0\}=2$.
        \item The spectrum $L_{\frac{1}{2}+i\R}$ is a closed set.
        \item The set of accumulation points $L_{\frac{1}{2}+i\R}^\prime$ form a perfect set, that is,  $L_{\frac{1}{2}+i\R}^{\prime\prime}=L_{\frac{1}{2}+i\R}^\prime$.
    \end{enumerate}
\end{theorem}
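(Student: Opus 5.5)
Using \Cref{lem:A}, we first express $k(\frac12+i\alpha)$ as a lim sup of a continuous height function along the shift orbit of the continued fraction of $\alpha$. Then we identify $L_{\frac12+i\R}$ with a dynamical Lagrange spectrum of a horseshoe and import the general theory.

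\textbf{Step 1 (formula for $k$).} Write $\alpha=[a_0;a_1,a_2,\dots]$. By \Cref{lem:A}, every approximation with $|\xi-p/q|\le 1/(2|q|^2)$ has imaginary part equal to a convergent $p_n/q_n$ or one of the two mediants, under the congruence conditions modulo $4$. These are exactly the candidates that matter, since $2\in L_{\frac12+i\R}$ is the threshold. For $\Re(p/q)=1/2$ we write $p/q=\frac12(1+i\,r/s)$. One checks that the best Gaussian denominator has $|q|^2$ comparable to $s^2/2$ or $s^2$ depending on the parity of $r,s$, and hence to residues of $s$ modulo $4$. The key step is to check that these parity data are determined by the partial quotients together with a finite-state ``parity class'' of the pair $(q_{n-1},q_n)$ modulo $4$ and $(p_{n-1},p_n)$ modulo $2$. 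Using the classical identities $q_n|q_n\alpha-p_n|^{-1}=[a_{n+1};a_{n+2},\dots]+[0;a_n,\dots,a_1]$, and their analogues for the mediants, $(|q||q\xi-p|)^{-1}$ becomes a function $f(\theta)$, where $\theta$ is a bi-infinite sequence in the alphabet $\N\times S$ and $S$ is a finite set of residue states. Then $k(\xi)=\limsup_n f(\sigma^n\theta)$, and $f$ is continuous in the product topology (a maximum of finitely many expressions in $[a_{n+1};\dots]$ and $[0;a_n,\dots]$).

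\textbf{Step 2 (reduction to a horseshoe).} We encode the residue states as a subshift of finite type. The transitions of $(q_{n-1},q_n)\bmod 4$ are governed by $a_{n+1}\bmod 4$, so the symbolic space is a skew product of the Gauss shift with a finite permutation-like action. When $k<\infty$ the partial quotients are bounded. For each $N$, restrict to $a_i\le N$ and realize the sequences as a horseshoe $\Lambda_N$, built from the product of the Gauss-type Cantor set $K_N$ with itself times finitely many copies, i.e.\ a finite union of products of regular Cantor sets. We take $f$ as a $C^1$ height function that is locally a max of functions of the form $x+y$. Then $L_{\frac12+i\R}=\bigcup_N L(\Lambda_N,f)$, where $L(\Lambda_N,f)$ is the dynamical Lagrange spectrum. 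Restricting to transitive components is possible after checking irreducibility of the state graph, which should follow from the freedom in choosing $a_n$ mod 4.

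\textbf{Step 3 (conclusions).} Items (1)–(2) follow from Moreira's theorem \cite{LMV} on continuity of $t\mapsto\dim_H(L(\Lambda,f)\cap(-\infty,t))$ and its equality with box dimension for conservative horseshoes with locally-max-of-linear height functions. Exhaustion in $N$ and the fact that bounded $t$ forces bounded $a_i$ let us pass to the union. For item (3), Schmidt's \Cref{thm:schmidt} gives that below $2$ the spectrum is countable, so $d_0(t)=0$ for $t\le2$. For $t>2$ we construct a subhorseshoe of positive dimension with $f$ bounded by $t$: we concatenate long blocks of a periodic word realizing the value $2$ (the limit of the $\theta_\Lambda$, e.g.\ sequences built from $1$'s and $2$'s in the right residue pattern), with free choices separated by long stretches. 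Item (4), closedness, is the standard argument: take a convergent sequence of values, approximate by periodic orbits, and glue with increasing-length blocks via compactness of $\Lambda_N$. Item (5) follows the real case: every accumulation point is approximated by values realized by sequences containing arbitrarily long free tails, which yields accumulation of accumulation points.

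\textbf{Main obstacle.} The main difficulty is Step 1. We must show rigorously that the relevant quantity is exactly a continuous function of the shifted sequence together with a finite residue state, and that this finite state system is transitive, so that the horseshoe framework of \cite{LMV} applies. Item (3) for $t$ slightly above $2$ is the second delicate point. I would first try to use Schmidt's $C$-minimal forms to identify the limiting periodic pattern and then perturb it.
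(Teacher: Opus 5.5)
Your overall architecture matches the paper's (\Cref{lem:A} $\to$ a continued-fraction formula with $(q_{n-1},q_n)\bmod 4$ states $\to$ the skew-product horseshoe $C(N)\times C(N)\times\mathcal I$ $\to$ the continuity theory, plus Schmidt's theorem below $2$). However, Step 1 asserts something false, and the rest of the argument depends on it.

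\textbf{The identification below $2$.} The identity $k(\xi)=\limsup_n f(\sigma^n\theta)$, and with it $L_{\frac12+i\R}=\bigcup_N L(\Lambda_N,f)$, holds when $k(\xi)>2$ but fails in general. \Cref{lem:A} only controls approximations with $\abs{\xi-p/q}\le\frac{1}{2|q|^2}$. When $k(\xi)\le 2$, approximations with $\Re(p/q)\neq\frac12$ may dominate.

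The paper's \Cref{rem:optimal_inequality} gives a counterexample: $\alpha=\frac12(\sqrt2-1)=[0;\overline{4,1}]$ has $k(\frac12+i\alpha)=2$ but $\limsup_n\eta(n)=\sqrt2$. So $\sqrt2$ belongs to the dynamical spectrum, although $\min L_{\frac12+i\R}=\sqrt3$.

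Hence continuity of $t\mapsto\dim_H(L_{\varphi,f,\tilde\Sigma}\cap(-\infty,t))$ gives items (1)--(3) for $d_0$ only after two further facts are established:
\begin{itemize}
\item the part of the dynamical spectrum below $2$ is dimensionally negligible;
\item every dynamical value $\ge 2$, starting from an arbitrary residue state, is realized by an actual $\alpha$.
\end{itemize}
This is exactly the content of \Cref{lem:model_lemma}. There the identification is proved only on $[2,t]$, using three ingredients:
\begin{itemize}
\item the one-sided bound \eqref{eq:inequality_k_less_than_2}, valid for all $\alpha$, with equality when $k>2$;
\item an explicit prefix construction $\alpha(\tilde x)$ reaching any state of $\mathcal I$ (this is what your irreducibility remark has to become);
\item Schmidt's theorem, to get countability of the dynamical spectrum below $2$.
\end{itemize}
The same issue affects your item (3): a positive-dimensional subhorseshoe under level $t$ helps only if its Lagrange values lie in $[2,t)$.

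\textbf{Items (3) and (5).} For item (5), the ``free tails'' heuristic does not explain why the approximating values are themselves accumulation points. The paper instead applies \Cref{thm:perfect_set}: Vieira's theorem, extended to a maximum of finitely many transverse height functions via a Rokhlin tower. That theorem also gives $\inf L'=\max\{t:\dim_H(L\cap(-\infty,t))=0\}$, and hence item (3), since the spectrum is discrete below $2$ and accumulates at $2$.

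For an explicit proof of $d_0(2+\varepsilon)>0$, your suggested pattern of $1$'s and $2$'s cannot work. Along such sequences the residue dynamics forces infinitely many $n$ with either $q_n\equiv 2\pmod 4$, or $q_{n-1}\equiv q_n$ odd and $a_{n+1}=2$. In both cases $\eta(n)$ stays a fixed amount above $2$. The paper uses the blocks $((4,1)^k,1,3,1)$ and $((4,1)^{k+1},1,3,1)$, whose periodic values decrease to $2$.

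\textbf{Applying the continuity theorem.} It must be applied to this specific system, not generically, and three things need checking:
\begin{itemize}
\item each piece of $\tilde f$ must have nonzero derivative along both the stable and unstable directions; the mediant pieces such as $2(\frac{1}{1+y}+\frac{1}{1/x-1})$ are not of the form $x+y$;
\item you need the max-of-functions version, \Cref{thm:continuity};
\item the dimension formula must hold for all subhorseshoes, which the paper obtains from the non-essential affinity of Gauss--Cantor sets (\Cref{prop:geometric_properties_prop1}).
\end{itemize}
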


We then have the immediate corollary for $L_{\Q(i)}$:

\begin{corollary}\label{cor:dimension}
    Let $d(t):=\dim_H(L_{\Q(i)}\cap (-\infty,t))$. Then we have that $d(2+\varepsilon)>0$ for every $\varepsilon>0$.
\end{corollary}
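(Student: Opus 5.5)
The corollary follows from \Cref{thm:main} by monotonicity of Hausdorff dimension. The key observation is the inclusion $L_{\frac{1}{2}+i\R}\subseteq L_{\Q(i)}$. By definition, every element of $L_{\frac{1}{2}+i\R}$ has the form $k(\frac{1}{2}+i\alpha)<\infty$ with $\alpha\in\R\setminus\Q$. The number $\frac{1}{2}+i\alpha$ is not in $\Q(i)$, since its imaginary part is irrational. Hence this value also belongs to $L_{\Q(i)}$. It follows that, for every $t\in\R$,
\begin{equation*}
    L_{\frac{1}{2}+i\R}\cap(-\infty,t)\subseteq L_{\Q(i)}\cap(-\infty,t),
\end{equation*}
and therefore $d_0(t)\le d(t)$.

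It then suffices to show that $d_0(2+\varepsilon)>0$ for every $\varepsilon>0$. This is exactly the content of item (3) of \Cref{thm:main}. That item says $2=\max\{t:d_0(t)=0\}$, so $2$ is the largest $t$ with $d_0(t)=0$. Since $d_0$ is non-decreasing, $d_0(t)=0$ for all $t\le 2$. If we had $d_0(2+\varepsilon)=0$ for some $\varepsilon>0$, then $2+\varepsilon$ would lie in $\{t:d_0(t)=0\}$ and would exceed its maximum, a contradiction. Hence $d(2+\varepsilon)\ge d_0(2+\varepsilon)>0$.

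There is no genuine obstacle here. All the depth is in item (3) of \Cref{thm:main}, which we assume. The only point needing care is the inclusion of spectra, namely that finiteness of the Lagrange constant and irrationality of $\frac{1}{2}+i\alpha$ over $\Q(i)$ carry over directly. As a complementary remark, \Cref{thm:schmidt} shows that $L_{\Q(i)}\cap(-\infty,2)$ is countable. Hence $d(t)=0$ for $t\le 2$, so the corollary says $2$ is also the threshold for the full complex Lagrange spectrum.
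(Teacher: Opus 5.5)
Your proposal is correct and is essentially the paper's argument. The paper states this as an immediate corollary of \Cref{thm:main}, obtained exactly as you do: the inclusion $L_{\frac{1}{2}+i\R}\subseteq L_{\Q(i)}$ together with monotonicity of Hausdorff dimension gives $d(2+\varepsilon)\geq d_0(2+\varepsilon)$, and item (3) gives $d_0(2+\varepsilon)>0$. The paper also checks $d_0(2+\varepsilon)>0$ explicitly, via Cantor sets built from $x_k=[0;\overline{(4,1)^k,1,3,1}]$.
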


To prove this result, we will use a completely different approach from Schmidt for a matter of convenience. Our construction is based on the classical continued fraction algorithm, via the \Cref{lem:A}.

\subsection{Poorly approximable complex numbers}

We say that a complex number $z$ is \emph{poorly approximable} if
\begin{equation}\label{eq:1/2|q|^2}
    \abs{z-\frac{p}{q}}\geq\frac{1}{2|q|^2}, \quad\text{for all } p,q\in\Z[i], q\neq 0.
\end{equation}

Gurwood \cite{Gurwood} introduced and characterized the real poorly approximable numbers, where the constant $\frac{1}{2}$ in \eqref{eq:1/2|q|^2} is replaced by $\frac{1}{3}$.

It follows from \Cref{thm:schmidt} that, if $\theta_\Lambda$ is a root of one of the forms $g_\Lambda$ defined in \eqref{eq:forms}, then the number $z=\frac{1}{2}(1+i\theta_\Lambda)$ satisfies \eqref{eq:1/2|q|^2} with at most finitely many exceptions. We shall prove that, in fact, no exceptions occur.

In the real setting, Harcos \cite{Harcos} and Florek \cite{Florek} observed that the roots of the classical Markov forms enjoy a stronger property than merely having Lagrange constants in the discrete part of the spectrum: they are poorly approximable. We prove an analogous result for the roots of Schmidt's $C$-minimal forms. These are binary quadratic real forms satisfying properties closely analogous to those of the classical Markov forms and were introduced by Schmidt in his study of the complex Lagrange spectrum. Their definition is recalled in detail in \Cref{sec:binary_forms}.

\begin{theorem}\label{thm:poorly_approximable}
    Let $z=\frac{1}{2}(1+i\theta)$ where $\theta$ is a root of one of the forms $g_\Lambda$ or $h_M$ defined on \eqref{eq:forms}. Then
    \begin{equation*}
        \abs{z-\frac{p}{q}}\geq\frac{1}{2|q|^2}, \quad\text{for all } p,q\in\Z[i], q\neq 0.
    \end{equation*}
\end{theorem}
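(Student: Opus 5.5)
We argue by contradiction. Suppose some $p,q\in\Z[i]$, $q\neq 0$, satisfy $\abs{z-p/q}<\frac{1}{2|q|^2}$. Writing $z=\frac12+i\alpha$ with $\alpha=\theta/2\in\R\setminus\Q$, \Cref{lem:A} gives $\Re(p/q)=\frac12$, so $p/q=\frac12+i\,r/s$ for a reduced rational $r/s$. We first compute the size of $|q|^2$ in terms of $s$. Since $p/q=\frac{s+2ir}{2s}$, the reduced Gaussian denominator of $p/q$ is $2s/\gcd_{\Z[i]}(s+2ir,2s)$. This gcd is a power of $1+i$ (times a unit). A short parity computation should give the following: if $s$ is odd then $|q|^2\geq 4s^2$; if $s\equiv 2\pmod 4$ then $|q|^2\geq s^2/2$ minimally, attained when $(1+i)^2$ divides both terms; and if $4\mid s$ then $|q|^2\ge 2s^2$. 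The inequality then reads
\begin{equation*}
    \abs{\alpha-\frac rs}<\frac{1}{2|q|^2},\qquad\text{i.e.}\qquad \abs{\theta-\frac{2r}{s}}<\frac{1}{|q|^2}.
\end{equation*}
Which cases occur is exactly what the congruences in \Cref{lem:A} record.

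We then reduce to a real statement about $\theta$. The three alternatives of \Cref{lem:A} say that $r/s$ is a convergent $p_n/q_n$ of $\alpha$, or an intermediate fraction $(p_{n+1}-p_n)/(q_{n+1}-q_n)$ or $(p_n+p_{n-1})/(q_n+q_{n-1})$ with denominator $\equiv 2\pmod 4$. The claim to prove therefore becomes a finite list of real inequalities. For convergents with $s$ odd we need $s^2\abs{\alpha-r/s}\geq\frac18$, and similar thresholds for the other parities. For the intermediate fractions with $s\equiv2\pmod4$ we need $s^2\abs{\alpha-r/s}\geq 1$. All of these are expressible through the classical formula
\begin{equation*}
    q_n^2\abs{\alpha-\frac{p_n}{q_n}}=\frac{1}{\alpha_{n+1}+\beta_n},\qquad \beta_n=[0;a_n,\dots,a_1],
\end{equation*}
together with its analogue for intermediate fractions, $(q_{n+1}-q_n)^2\abs{\alpha-\cdot}$, expressed via $\alpha_{n+1}$ and $\beta_n$. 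So the theorem amounts to explicit upper bounds on quantities like $\alpha_{n+1}+\beta_n$ holding for \emph{every} $n$, including the initial indices.

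To establish these bounds we use the structure of $g_\Lambda$ and $h_M$ from \Cref{sec:binary_forms}. By \Cref{thm:schmidt}, $k(z)<2$, i.e.\ the inequalities hold asymptotically, with the tail quantities bounded by $\sqrt{4-\Lambda^{-2}}$-type constants. The point is to remove the finitely many exceptions. Following Harcos and Florek, we would use that the roots of $g_\Lambda$ (or $h_M$) are purely periodic, or nearly so, in the appropriate expansion, with the period given by Schmidt's words. Any initial convergent of $\theta$ then coincides with a convergent of a shifted root $\theta'$ of an equivalent $C$-minimal form, and the form values satisfy $|g_\Lambda(x,y)|\geq m(g_\Lambda)$ for all integer $(x,y)\neq 0$. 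Concretely, for $\theta$ a root of $g(x,y)=A x^2+Bxy+Cy^2$, if $|\theta-2r/s|$ were too small then
\begin{equation*}
    |g(2r,s)|=|A|\,s^2\abs{\theta-\tfrac{2r}{s}}\abs{\theta'-\tfrac{2r}{s}}
\end{equation*}
would fall below the minimum of $g$ on the relevant lattice. The parity constraint, requiring first coordinate even or $s\equiv 2\pmod 4$, corresponds precisely to the sublattice on which Schmidt's $C$-minimality is stated. This turns the problem into checking $|\theta'-2r/s|\le$ (discriminant-dependent bound), which holds since both roots are controlled.

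The main obstacle is this last step. We must match the three parity cases of \Cref{lem:A} with the exact minimality property of $g_\Lambda$ and $h_M$, including the constants $\frac18$ versus $1$. We must also handle the possibility that $2r/s$ is not near the conjugate root, where the factor $|\theta'-2r/s|$ could be large. We would first try to bound $|\theta-\theta'|$ using the explicit discriminant $4\Lambda^2-1$ (or the analogous one for $h_M$). Failing that, we would use Schmidt's recursive tree structure to induct on $\Lambda$, verifying the base forms by direct computation.
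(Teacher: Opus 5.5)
There is a genuine gap. By \Cref{lem:A} your reduction is fine in outline, apart from one slip noted below: everything comes down to showing $\abs{\theta-\frac{2r}{s}}\geq\frac{c_4(s)}{4s^2}$ for every reduced $r/s$ and both roots. The step you call the main obstacle is the whole content of the theorem, and the mechanism you propose does not get past it.

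Suppose $s^2\abs{\theta-2r/s}<c_4(s)/4$ and $\abs{F(2r,s)}\geq c_4(s)$, the latter coming from \Cref{lem:schmidt_59} on the right sublattice. Then the product formula only gives $\abs{\Theta-2r/s}>4$. This contradicts $\theta-\Theta=\sqrt{\Disc F}<4$ when $2r/s$ lies on the side of $\theta$ facing $\Theta$. When $2r/s$ lies just beyond $\theta$, you only know $\abs{\Theta-2r/s}\le\sqrt{\Disc F}+\abs{\theta-2r/s}$. Moreover $4-\sqrt{\Disc g_\Lambda}=4-\sqrt{16-4\Lambda^{-2}}\approx\frac{1}{2\Lambda^2}$, and $\approx M^{-2}$ for $h_M$. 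So you reach a contradiction only when $s$ is large compared with $\Lambda$ (resp. $M$). That is precisely the asymptotic regime already covered by \Cref{thm:schmidt}.

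The remaining small denominators cannot be settled by direct computation, since there are infinitely many forms. The periodicity/equivalence idea does not rescue this, for three reasons:
\begin{itemize}
\item shifting the expansion changes the residues $(q_{n-1},q_n)\bmod 4$ that determine $c_4(s)$;
\item $GL_2(\Z)$-equivalence does not preserve $\Z_0^2$ and $\Z_1^2$;
\item at initial indices the finite backward expansions differ from the periodic ones, so periodicity alone does not transfer the asymptotic bound to the early convergents.
\end{itemize}

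What is missing is the Harcos-type argument of \Cref{lem:theta,lem:Theta}. It proves the sharper bound $\Delta s^2\abs{\theta-2r/s}\ge c_4(s)$ with $\Delta=2+\sqrt{4-tm^{-2}}<4$.
\begin{itemize}
\item Normalize $m^2F(x,y)=F'(y,z)=ty^2+4myz+z^2$ with $z=mx-ky$, and put $u=2mr-ks$.
\item The case $u\le 0$ is exactly your product argument, now with $\abs{\Theta-2r/s}\le k/m-\Theta=\Delta$.
\item For $u>0$ one needs a \emph{second} lattice point. Since $k^2+t=lm$ gives $ku\equiv ts\pmod m$, the pair $(s',u')=(u,-ts)$ comes from an integral point $(r',s')$. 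It satisfies $F'(s',u')=tF'(s,u)-8tmsu$.
\item A parity analysis then gives either $\abs{F(r',s')}\ge\max\{t,c_4(s)\}$ with $\min\{t,c_4(s)\}=1$, or $\abs{F(r',s')}\ge 2c_4(s)$. Each sign of $F'(s',u')$ yields the required inequality.
\end{itemize}

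Separately, your denominator computation is wrong for $4\mid s$. There $p/q=(s/2+ri)/s$ is already reduced, so $\abs{q}^2=s^2$ (that is, $c_4(s)=4$), not $\abs{q}^2\ge 2s^2$. With your value, the target inequality in that case would be a factor $2$ too weak.
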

In particular, an application of the argument used in \cite[Chapter II, Lemma 14]{Cassels} yields the following result.
\begin{theorem}\label{thm:uncountably_many_for_c=2}
    There exist uncountably many complex numbers $z=\frac{1}{2}(1+i\theta)$ such that
    \begin{equation*}
        \abs{z-\frac{p}{q}}\geq\frac{1}{2|q|^2}, \quad\text{for all } p,q\in\Z[i], q\neq 0.
    \end{equation*}
    More precisely, it holds for any $z$ on the closure of
    \begin{equation*}
    	\left\{\frac{1}{2}(1+i\theta): \theta \text{ a root of one of the forms $g_\Lambda$ or $h_M$ defined on \eqref{eq:forms}}\right\},
    \end{equation*}
    which is an uncountable set.
\end{theorem}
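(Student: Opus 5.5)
The plan is to deduce the statement from \Cref{thm:poorly_approximable} by a closure argument, and then to show that the closure is uncountable by a Cassels-type argument on the sequences of partial quotients.

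\textbf{Step 1: the inequality passes to limits.} Let $S$ be the set of $z=\frac12(1+i\theta)$ with $\theta$ a root of some $g_\Lambda$ or $h_M$. Fix $p,q\in\Z[i]$ with $q\neq 0$. The set
\begin{equation*}
\left\{z\in\C:\abs{z-\frac{p}{q}}\geq\frac{1}{2\abs{q}^2}\right\}
\end{equation*}
is closed, since it is the complement of an open disc. By \Cref{thm:poorly_approximable} it contains $S$, so it also contains $\overline{S}$. Intersecting over all pairs $(p,q)$ shows that every $z\in\overline{S}$ satisfies the inequality. Moreover $\overline{S}$ lies on the line $\Re z=\frac12$, so every element has the form $\frac12(1+i\theta)$ with $\theta\in\R$. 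There is no irrationality issue here: if $\theta$ were rational, then $z$ would lie in $\Q(i)$ and would violate the inequality with $p/q=z$. Hence every $\theta$ arising from $\overline{S}$ is automatically irrational.

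\textbf{Step 2: $\overline{S}$ is uncountable.} This step follows \cite[Chapter II, Lemma 14]{Cassels}. By the material of \Cref{sec:binary_forms}, the roots $\theta$ of the $C$-minimal forms have purely periodic (or eventually periodic) continued fraction expansions. The periods are built from two basic blocks $A$ and $B$ by the usual Christoffel/Markov-type substitution rules that generate the tree of solutions of $y_1^2+y_2^2+2x^2=4y_1y_2x$. In the real Markov case these periods are Christoffel words in $\{11,22\}$, and here the analogous words come from Schmidt's $C$-regular description.

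Given any infinite path in the ternary tree, the periods along that path form a nested sequence: each is a prefix-extension of the previous one, up to a fixed finite correction. The corresponding $\theta$'s therefore converge to a real number whose expansion is the limiting infinite word, and that number lies in $\overline{\{\theta\}}$.

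Distinct paths produce distinct limit words, and hence distinct limits, because continued fraction expansions of irrationals are unique. The limits are irrational by Step 1. Since the ternary tree has uncountably many infinite paths (the mediant/slope parametrization by $[0,1]$ gives continuum many), $\overline{S}$ is uncountable. An alternative that avoids the tree is to note that the limits of the $\theta$ sequences correspond to Sturmian-type words of every irrational slope $\gamma\in(0,1)$, with distinct slopes giving distinct words.

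\textbf{Main obstacle.} The delicate step is making the combinatorial description of the periods of the $g_\Lambda$ and $h_M$ precise enough to guarantee two things:
\begin{itemize}
\item[(i)] convergence along infinite paths, which needs common growing prefixes;
\item[(ii)] injectivity of the limit map.
\end{itemize}
I would first try to use Schmidt's explicit recursive formulas for the forms from \eqref{eq:forms} and \cite{SchmidtCforms}, translated into classical continued fractions via \Cref{lem:A}. If that translation is messy, a cheaper route is available: it suffices to exhibit countably many roots $\theta_\Lambda$ whose closure contains a Cantor set. Any closed countable set has an isolated point, and because of the self-similar prefix structure each $\theta_\Lambda$ is an accumulation point of other roots. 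So $\overline{S}$ would be a nonempty closed set without isolated points, hence perfect and uncountable.
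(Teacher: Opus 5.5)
Your Step 1 is correct and is exactly the closure argument the paper uses. For fixed $(p,q)$ the inequality defines a closed set, which contains $S$ by \Cref{thm:poorly_approximable} and hence contains $\overline{S}$. Your remark that limit points automatically have irrational $\theta$ is a nice addition.

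\textbf{The gap is Step 2.} Step 2 carries the whole uncountability claim, and it rests on an asserted description of the \emph{classical} continued fraction expansions of the roots of $g_\Lambda$ and $h_M$. You assume periods generated from two blocks by Christoffel-type substitutions, nested along paths of the Vulakh tree, with distinct paths giving distinct infinite words. Nothing in \Cref{sec:binary_forms} provides this. \Cref{lem:A} cannot provide it either: it only says which Gaussian rationals can be good approximations to $\frac12+i\alpha$, in terms of the convergents of $\alpha$, and says nothing about the expansion of a root of $g_\Lambda$. The paper explicitly remarks that even Schmidt's $C$-expansions of these roots are hard to describe, and avoids them for that reason. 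So both (i), convergence along paths, and (ii), injectivity, are unproved, and together they are the entire content of the claim.

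Your fallback has the same hole. The assertion that each $\theta_\Lambda$ is an accumulation point of other roots is the unproved prefix structure again, and it is not a formality. Consider the branch that keeps $\Lambda_1$ fixed, always taking the child $(\Lambda_1,\Lambda_{12};M_2,M_{12})$. By \eqref{eq:determinant_equation} and \eqref{eq:Vulakh}, the middle coefficients $4-2l/\Lambda$ of $g_{\Lambda_2}$ and $g_{\Lambda_1}$ differ by $8M_1^2/(M_1^2+M_2^2)$ in absolute value. Along this branch that quantity tends to $8/(1+\rho^2)\neq 0$, where $\rho=2\Lambda_1+\sqrt{4\Lambda_1^2-1}$. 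So these forms do not converge to $g_{\Lambda_1}$, and at most one root of $g_{\Lambda_1}$ can be approached along that branch.

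\textbf{How the paper closes this.} It uses no continued fraction combinatorics; it follows \cite[Chapter II, Lemma 14]{Cassels} and works directly with the coefficients of the forms.
\begin{enumerate}
\item Convergence: along a downward path, the normalized coefficients of $h_{M_1^{(r)}}$ and $g_{\Lambda_1^{(r)}}$ are Cauchy. By \eqref{eq:determinant_equation} and \eqref{eq:M_2>Lambda_1}, each change of coefficients is $O((M_1^{(r)})^{-2})$, resp.\ $O((\Lambda_1^{(r)})^{-2})$. Moreover $M_1^{(r)}$ and $\Lambda_1^{(r)}$ strictly increase whenever they change, so the changes are summable.
\item Injectivity: for all but countably many paths, $\Lambda_1^{(r)},M_1^{(r)}\to\infty$. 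For such paths the limit form has discriminant exactly $16$ and inherits the sign conditions of \Cref{lem:N_inequalities}. A common limit of two paths that split at some vertex would satisfy these conditions for two different children of that vertex. By \Cref{lem:Schmidt511,lem:new_Schmidt511} this forces discriminant $>16$, a contradiction.
\end{enumerate}
To repair your proof, replace Step 2 by this coefficient-plus-discriminant argument; the description of the expansions is not needed.
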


The $C$--expansions of the roots of the $C$--minimal forms are rather difficult to describe explicitly, see \cite{SchmidtCforms}. For this reason, we follow instead the approach developed by Harcos \cite{Harcos} in his unpublished undergraduate thesis, which works directly with binary quadratic forms. The previous results lead naturally to the following question. For $c>0$, define 
\begin{multline*}
    X_c = \bigg\{z\in\C\setminus\Q(i) : \abs{z-\frac{p}{q}}<\frac{1}{c|q|^2} \\ 
    \text{has only finitely many solutions } p,q\in\Z[i]\bigg\}.
\end{multline*}

The cardinality of $X_c$ is completely determined by Schmidt's theorem and our results: for $c<2$ this follows from \Cref{thm:schmidt}, for $c>2$ from \Cref{cor:dimension}, and for $c=2$ from \Cref{thm:uncountably_many_for_c=2}.

\begin{corollary} 
    The set $X_c$ is uncountable for $c\geq 2$ and countable for $c<2$. 
\end{corollary}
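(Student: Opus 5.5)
The corollary splits into three regimes, and each is handled by the result cited for it in the preceding paragraph. First I will unpack the definition. A number $z\in\C\setminus\Q(i)$ lies in $X_c$ exactly when $|q||qz-p|\geq 1/c$ for all but finitely many pairs $p,q\in\Z[i]$. If $k(z)<c$, this holds, so $z\in X_c$. Conversely, if $z\in X_c$, then $k(z)\leq c$. Hence
\begin{equation*}
\{z: k(z)<c\}\subseteq X_c\subseteq\{z:k(z)\leq c\}.
\end{equation*}
To relate the $\limsup$ over $|p|,|q|\to\infty$ with "finitely many solutions", I will note that for fixed $q$ only finitely many $p$ satisfy the inequality. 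Also, $|q|$ bounded with $|p|$ large cannot occur in the inequality.

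\emph{Case $c<2$.} By \Cref{thm:schmidt}, every $z$ with $k(z)\leq c<2$ is equivalent to one of the countably many numbers $\xi_\Lambda$ or $\frac{1}{6}(3\pm\sqrt{-15\pm12i})$. Each equivalence class is countable, because $GL_2(\Z[i])$ is countable. So $X_c$ is countable. The upper inclusion is what is needed here.

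\emph{Case $c=2$.} I will use \Cref{thm:uncountably_many_for_c=2}. It gives uncountably many $z=\frac{1}{2}(1+i\theta)$ with $|z-p/q|\geq\frac{1}{2|q|^2}$ for every $p/q$. Such $z$ are irrational: a point $z=p_0/q_0\in\Q(i)$ would violate the inequality at $p/q=p_0/q_0$ itself. So the inequality $|z-p/q|<\frac{1}{2|q|^2}$ has no solutions at all, and these $z$ lie in $X_2$. The same argument gives $X_2\subseteq X_c$ for every $c\geq 2$, since $X_c$ increases with $c$.

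\emph{Case $c>2$.} Although monotonicity already settles this, I will also give an independent argument via \Cref{cor:dimension}. Take $\varepsilon>0$ with $2+\varepsilon<c$. Then $L_{\Q(i)}\cap(-\infty,2+\varepsilon)$ has positive Hausdorff dimension, so it is uncountable. Pick uncountably many distinct values $k(z)<c$ in it; the corresponding $z$ are distinct and lie in $X_c$.

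The only delicate point is the bookkeeping between $\limsup$ and "finitely many solutions", which is routine. Everything else reduces to the cited results.
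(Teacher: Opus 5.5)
Your proposal is correct and is essentially the paper's argument: it uses the same split into $c<2$, $c=2$ and $c>2$, handled respectively by \Cref{thm:schmidt}, \Cref{thm:uncountably_many_for_c=2} and \Cref{cor:dimension}. Your sandwich $\{z:k(z)<c\}\subseteq X_c\subseteq\{z:k(z)\leq c\}$ and your remark that $X_c$ increases with $c$ (so the case $c>2$ already follows from $c=2$, without \Cref{cor:dimension}) just make explicit what the paper's one-line justification leaves implicit.
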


The paper is organized as follows. In \Cref{sec:preliminary} we recall some basic facts on continued fractions and prove \Cref{lem:A}. In \Cref{sec:binary_forms} we recall the theory of $C$--minimal forms and show how their roots can be used to construct uncountably many poorly approximable complex numbers. In \Cref{sec:dynamical_spectra} we review the theory of dynamically defined spectra. In \Cref{sec:dynamical_characterization}, we prove the dynamical characterization of the spectrum $L_{\frac{1}{2}+i\R}$ in terms of regular continued fractions. Finally, in \Cref{sec:fractal_properties}, we establish the fractal properties of the restricted spectrum $L_{\frac{1}{2}+i\R}$ and prove \Cref{thm:main}. We remark that \Cref{sec:binary_forms} is independent from the fractal geometry results and can be read independently.

\section{Basic facts of continued fractions}\label{sec:preliminary}

Given $\alpha \in \R\setminus\Q$, we define recursively $\alpha_0 := \alpha$,
\begin{equation*}
a_n = \lfloor \alpha_n \rfloor, \qquad \alpha_{n+1} = \frac{1}{\alpha_n-a_n}, \quad\text{for every } n\in\N.
\end{equation*}

Thus, for every $n\in\N$, we have $\alpha_n=a_n+\frac1{\alpha_{n+1}}$ and
\begin{equation*}
    \alpha = \alpha_0 = [a_0;a_1,a_2,\dots,a_n,\alpha_{n+1}]= a_0 + \cfrac1{a_1 + \cfrac1{a_2 +{\ddots +{\cfrac1{a_n+\cfrac1{\alpha_{n+1}}}}}}}.
\end{equation*}

The rational numbers
\begin{equation*}
    \frac{p_n}{q_n}=[a_0;a_1,a_2,\dots,a_n]=a_0 + \cfrac1{a_1 + \cfrac1{a_2 + {\ddots +{\cfrac1{a_n}}}}}, \qquad n \in \mathbb N,
\end{equation*}
are called the convergents of the continued fraction of $\alpha$ and
$$
\alpha = [a_0;a_1,a_2,\dots] = a_0 +\cfrac1{a_1 + \cfrac1{a_2 + {\ddots}}}.
$$
is the representation by continued fractions of the irrational real number $\alpha$. For every $n\in\mathbb N$, it holds that 
\begin{equation*}
    \left| \alpha - \frac{p_n}{q_n}\right| < \frac{1}{2q_n^2}
  \quad\text{or}\quad
  \left| \alpha - \frac{p_{n+1}}{q_{n+1}}\right| < \frac{1}{2q_{n+1}^2}.
\end{equation*}

On the other hand, if $\bigl| \alpha-\frac pq\bigr| < \frac{1}{2q^2}$ then $\frac pq$ is a convergent of the continued fraction of $\alpha$ by Legendre's theorem. We will need the following generalization \cite[Theorem 10, Page 16]{SergeLang}.

\begin{lemma}\label{lem:legendre_generalized}
    Let $\alpha=[a_0;a_1,\dots]$ and suppose $q_n\leq q<q_{n+1}$. If
    \begin{equation*}
        \abs{\alpha-\frac{p}{q}}\leq\frac{1}{q^2},
    \end{equation*}
    then one of the following holds for some $n\geq 0$:
    \begin{itemize}
        \item $\frac{p}{q}=\frac{p_n}{q_n}$,
        \item $a_{n+1}\geq 2$, $\frac{p}{q}=\frac{p_{n+1}-p_n}{q_{n+1}-q_n}$ and $a_{n+1}-2+\beta_{n+1}<\alpha_{n+2}$,
        \item $a_{n+1}\geq 2$, $\frac{p}{q}=\frac{p_{n}+p_{n-1}}{q_{n}+q_{n-1}}$ and $(\alpha_{n+1}-2)\beta_{n+1}<1$.
    \end{itemize}
\end{lemma}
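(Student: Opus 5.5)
The plan is to write $(p,q)$ in the basis formed by two consecutive convergents and then reduce everything to an explicit one-variable inequality. We may assume $q>0$. Since $p_{n+1}q_n-p_nq_{n+1}=\pm1$, there are unique integers $u,v$ with
\begin{equation*}
    p=up_{n+1}+vp_n,\qquad q=uq_{n+1}+vq_n .
\end{equation*}
Put $\delta_k:=q_k\alpha-p_k$. The classical identities give $\delta_k=(-1)^k/(q_k\alpha_{k+1}+q_{k-1})$, so $\delta_n$ and $\delta_{n+1}$ have opposite signs. Also
\begin{equation*}
    q\alpha-p=u\delta_{n+1}+v\delta_n .
\end{equation*}
Throughout I take $\beta_{n+1}=q_{n-1}/q_n$, which is the usual meaning of this notation.

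\textbf{Step 1 (sign analysis).} If $u=0$, then $v=\pm1$ and $p/q=p_n/q_n$, which is the first alternative. If $v=0$, then $q=|u|q_{n+1}\ge q_{n+1}$, which is excluded. If $u,v$ are nonzero of the same sign, then $q\ge q_{n+1}+q_n$, again excluded. So we may assume $u\ge1$ and $v=-k$ with $k\ge1$.

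In this case the two terms $u\delta_{n+1}$ and $-k\delta_n$ have the same sign. Hence
\begin{equation*}
    |q\alpha-p|=u|\delta_{n+1}|+k|\delta_n|.
\end{equation*}
The hypothesis then becomes $F(u,k):=q\,(u|\delta_{n+1}|+k|\delta_n|)\le1$.

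\textbf{Step 2 (reduce to $u=1$).} Suppose $u\ge2$. The condition $q<q_{n+1}$ forces $kq_n>(u-1)q_{n+1}$, so $k\ge2$ and $k|\delta_n|$ is comparable to $k/(q_n\alpha_{n+1})$. The condition $q\ge q_n$ gives a lower bound on $q$. Together these give $F>1$. I expect this to be a routine estimate, using $q_{n+1}<q_n\alpha_{n+1}+q_{n-1}$.

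Now take $u=1$, so $q=q_{n+1}-kq_n$. The condition $q\ge q_n$ forces $1\le k\le a_{n+1}-1$, and in particular $a_{n+1}\ge2$. The value $k=1$ gives $(p_{n+1}-p_n)/(q_{n+1}-q_n)$. The value $k=a_{n+1}-1$ gives $q=q_n+q_{n-1}$, that is, $(p_n+p_{n-1})/(q_n+q_{n-1})$.

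\textbf{Step 3 (normalize).} Divide by $q_n$ and write $a=a_{n+1}$, $\beta=\beta_{n+1}$. Then $q_{n+1}/q_n=a+\beta$, $q_n|\delta_n|=1/(\alpha_{n+1}+\beta)$ and $q_n|\delta_{n+1}|=1/((a+\beta)\alpha_{n+2}+1)$. The quantity $F$ becomes
\begin{equation*}
    F(k)=(a+\beta-k)\Bigl(\frac{1}{(a+\beta)\alpha_{n+2}+1}+\frac{k}{\alpha_{n+1}+\beta}\Bigr),\qquad \alpha_{n+1}=a+\frac{1}{\alpha_{n+2}}.
\end{equation*}

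\textbf{Step 4 (the main obstacle: exclude $2\le k\le a-2$).} Set $j=a-k$, so $j,k\ge2$. The second summand of $F(k)$ equals $(j+\beta)k/(a+\beta+1/\alpha_{n+2})$. We have the identity
\begin{equation*}
    (j+\beta)k-(j+k+\beta)=(j-1)(k-1)-1+\beta(k-1)\ge0 .
\end{equation*}
So the second summand is at least $(a+\beta)/(a+\beta+1/\alpha_{n+2})$. The first summand is at least $j/((a+\beta)\alpha_{n+2}+1)$, and I will check that it compensates for the deficit $1/\alpha_{n+2}$ in the denominator, so that $F(k)>1$. This calculation has to be done carefully, and it is where I expect the main difficulty.

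\textbf{Step 5 (endpoints).} For $k=1$ and $k=a-1$, I expand $F(k)\le1$, clear denominators and simplify using $\alpha_{n+1}=a+1/\alpha_{n+2}$. For $k=1$ this should reduce to $a_{n+1}-2+\beta_{n+1}<\alpha_{n+2}$. For $k=a-1$ it should reduce to $(\alpha_{n+1}-2)\beta_{n+1}<1$. The boundary case of equality can be discarded by irrationality of $\alpha$. These two conditions are exactly the side conditions in the second and third alternatives of the statement.
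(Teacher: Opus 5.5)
The paper gives no proof of this lemma; it quotes it from Lang \cite[Theorem 10, Page 16]{SergeLang}. Your route is the standard self-contained one: expand $(p,q)$ in the basis of two consecutive convergents, then reduce to an explicit inequality. It is correct in substance, and it makes the side conditions of the second and third alternatives transparent, which the citation does not. As written, though, Step~1 misses a case, and the estimates of Steps~2 and~4 are only promised.

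\textbf{Missing case in Step 1.} The sign analysis shows only that $u$ and $v$ have opposite signs. It does not justify ``$u\ge1$, $v=-k$''. The case $u\le-1$, $v\ge1$ is not vacuous. For example, $u=-1$, $v=a_{n+1}+2$ gives $q=2q_n-q_{n-1}$, which lies in $[q_n,q_{n+1})$ as soon as $a_{n+1}\ge3$. This case is excluded as follows. The two terms again have the same sign, and $vq_n=q+|u|q_{n+1}\ge q_n+q_{n+1}$. Hence
\begin{equation*}
\abs{q\alpha-p}\ge v\abs{\delta_n}=\frac{v}{q_{n+1}+q_n/\alpha_{n+2}}>\frac{v}{q_{n+1}+q_n}\ge\frac{1}{q_n}\ge\frac{1}{q},
\end{equation*}
which contradicts the hypothesis.

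\textbf{Steps 2 and 4 do close.} Put $A=q_{n+1}/q_n=a_{n+1}+\beta_{n+1}$ and $\gamma=\alpha_{n+2}$. Then $q_n\abs{\delta_{n+1}}=1/(A\gamma+1)$ and $q_n\abs{\delta_n}=\gamma/(A\gamma+1)$, so
\begin{equation*}
F(u,k)=\frac{(uA-k)(u+k\gamma)}{A\gamma+1}.
\end{equation*}
For $u\ge2$, set $Q=uA-k$; the condition $Q\ge1$ is just $q\ge q_n$. Then
\begin{equation*}
(A\gamma+1)(F-1)=Q\Bigl(u+k\gamma-\frac{\gamma}{u}\Bigr)-\frac{k\gamma}{u}-1\ge(u-1)+\gamma\,\frac{k(u-1)-1}{u}>0,
\end{equation*}
so you do not even need $k\ge2$. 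For $u=1$,
\begin{equation*}
(A\gamma+1)\bigl(F(k)-1\bigr)=(A-k-1)+\gamma\bigl[(k-1)(A-k-1)-1\bigr].
\end{equation*}
This is positive for $2\le k\le a_{n+1}-2$, because then $A-k-1\ge1+\beta_{n+1}$. For $k=1$ and $k=a_{n+1}-1$ it reduces exactly to your two endpoint conditions. Your own Step~4 argument also works: the deficit is exactly $1/(A\gamma+1)$, while the first summand is $(j+\beta)/(A\gamma+1)\ge2/(A\gamma+1)$.

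\textbf{Degenerate case.} When $n=1$ and $a_1=1$, you have $q_0=q_1=1$ and $\beta_2=1$. Then $k=a_2$ is also allowed, giving $q=1$ and $p/q=p_0/q_0$; the hypothesis does hold there, since $\abs{\alpha-a_0}<1$. So ``$q\ge q_n$ forces $k\le a_{n+1}-1$'' is false in this case, and the lemma survives only because it says ``for some $n$''. Similarly, $u=0$ forces $v=\pm1$ only when $\gcd(p,q)=1$, although the conclusion $p/q=p_n/q_n$ is unaffected.
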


We have the following exact formula for the error term:
\begin{equation*}
 \alpha-\frac{p_n}{q_n} =\frac{(-1)^n}{(\alpha_{n+1}+\beta_{n+1}) q_n^2},\quad \text{for all } n\in {\mathbb N},
\end{equation*}
where $p_n/q_n=[a_0;a_1,a_2,\dots,a_n]$, $\alpha_{n}=[a_n;a_{n+1},a_{n+2},\dots]$ and $\beta_n=[0;a_{n-1},a_{n-2},\dots,a_1]$. 

As consequence given $x=[a_0;a_1,\dots, a_n, a_{n+1},\dots]$ and \\ $\tilde{x}=[a_0;a_1,\dots,a_n,b_{n+1},\dots]$, we have
\begin{equation}\label{eq:compare_continued_fractions}
    x-\tilde{x} = (-1)^n\frac{\tilde{\alpha}_{n+1}-\alpha_{n+1}}{q_n^2(\beta_{n+1}+\alpha_{n+1})(\beta_{n+1}+\tilde{\alpha}_{n+1})} 
\end{equation}
where $\alpha_{n+1}=[a_{n+1};a_{n+2},\dots]$, $\tilde{\alpha}_{n+1}=[b_{n+1};b_{n+2},\dots]$ and $\beta_{n+1}=\frac{q_{n-1}}{q_n}=[0;a_n,\dots,a_1]$. 

\begin{lemma}
Given two continued fractions $[0;a_1,\dots,a_n,a_{n+1},\dots]$ and $[0;a_1,\dots,a_n,b_{n+1},\dots]$ whose coefficients are equal to each other until the $n$-th coefficient, we have the following inequality:
    $$\abs{[0;a_1,\dots,a_n,a_{n+1},\dots]-[0;a_1,\dots,a_n,b_{n+1},\dots]}<2^{-(n-1)}$$
\end{lemma}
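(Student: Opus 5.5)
The plan is to use the exact error formula \eqref{eq:compare_continued_fractions} together with crude bounds on $q_n$ and on the denominator factors. Write $x=[0;a_1,\dots,a_n,a_{n+1},\dots]$ and $\tilde x=[0;a_1,\dots,a_n,b_{n+1},\dots]$. Since the integer parts agree (both are $0$), the formula gives
\begin{equation*}
    \abs{x-\tilde x}=\frac{\abs{\tilde\alpha_{n+1}-\alpha_{n+1}}}{q_n^2(\beta_{n+1}+\alpha_{n+1})(\beta_{n+1}+\tilde\alpha_{n+1})}.
\end{equation*}

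First I bound the $\alpha$-dependent part. Since $\beta_{n+1}\ge 0$, we have $\beta_{n+1}+\alpha_{n+1}\ge\alpha_{n+1}$ and $\beta_{n+1}+\tilde\alpha_{n+1}\ge\tilde\alpha_{n+1}$. Moreover $\alpha_{n+1},\tilde\alpha_{n+1}\ge 1$. Hence
\begin{equation*}
    \frac{\abs{\tilde\alpha_{n+1}-\alpha_{n+1}}}{\alpha_{n+1}\tilde\alpha_{n+1}}=\abs{\frac{1}{\alpha_{n+1}}-\frac{1}{\tilde\alpha_{n+1}}}<1,
\end{equation*}
because both reciprocals lie in $(0,1]$. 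This gives $\abs{x-\tilde x}<1/q_n^2$.

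Next I bound $q_n$ from below. From $q_{k+1}=a_{k+1}q_k+q_{k-1}\ge q_k+q_{k-1}$ and $q_0=q_1/a_1\ge 1$, we get $q_{k+1}\ge 2q_{k-1}$. This yields $q_n\ge 2^{(n-1)/2}$, and therefore
\begin{equation*}
    q_n^2\ge 2^{n-1},\qquad \abs{x-\tilde x}<2^{-(n-1)}.
\end{equation*}

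The only care needed is at the edge cases. For $n=0$ and $n=1$ the bound on $q_n$ is trivial, since $q_n\ge1\ge 2^{(n-1)/2}$. The strictness of the final inequality comes from the first step: the reciprocals $1/\alpha_{n+1}$ and $1/\tilde\alpha_{n+1}$ cannot be $0$ and $1$ simultaneously, so their difference is strictly less than $1$. If the lemma is also meant to cover finite expansions, where some $\alpha$ could be rational, the same argument applies with $\alpha_{n+1}\ge1$ still valid.
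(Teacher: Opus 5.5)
Your proof is correct. The paper gives no written proof of this lemma; it states it right after \eqref{eq:compare_continued_fractions}, so your derivation from that identity, using $\abs{1/\alpha_{n+1}-1/\tilde\alpha_{n+1}}<1$ and the growth bound $q_n\ge 2^{(n-1)/2}$, is presumably the intended argument. The one step you leave unstated is $q_k\ge q_{k-1}$, which you need for $q_{k+1}\ge 2q_{k-1}$; it follows immediately from the recurrence.
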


Through the paper we will work mainly with quotients of Gaussian integers with real part $1/2$. Given such an approximation $\frac{p}{q}=\frac{1}{2}+i\frac{r}{s}$ for some $r/s\in\Q$ and $\gcd(p,q)=\gcd(r,s)=1$, then the reduced form of $\frac{p}{q}$ is equal to 
\begin{equation}\label{eq:denominator_cases}
    \frac{p}{q}=\frac{1}{2}+i\frac{r}{s}=
    \begin{cases}
        \frac{s+2ri}{2s}, & \text{if $s\equiv1\pmod{2}$}, \\
        \frac{s/2+ri}{s}, & \text{if $s\equiv0\pmod{4}$}, \\
        \frac{(s/2+r)/2+i(r-s/2)/2}{s/(i+1)}, & \text{if $s\equiv2\pmod{4}$}.
    \end{cases}
\end{equation}

Indeed, first recall that if $a,b\in\Z$ are relatively prime over $\Z$, then there are $u,v\in\Z$ such that $au+bv=1$, so they are also relatively prime over $\Z[i]$. If $s$ is odd, then $\gcd(s,2r)=1$  and $\gcd(2,s+2ri)=\gcd(2,s)=1$. If $s\equiv 0\pmod{4}$, then $\gcd(2,s/2+ri)=\gcd(2,ri)=1$ and $\gcd(s/4,s/2+ri)=\gcd(s/4,ri)=1$. If $s\equiv 2\pmod{4}$, then we use the fact that $\gcd(s/2,s/2+ri)=\gcd(s/2,ri)=1$ and $\gcd(2,s/2+ri)=1+i$.

We will now characterize the good approximations and approximation constants of numbers of the form $\frac{1}{2}+i\alpha$ where $\alpha\in \R\setminus \Q$. We will see later in \Cref{rem:optimal_inequality} that the constant $\frac{1}{2}$ in the following lemma is optimal.

\begin{lemma}\label{lem:fundamental_lemma}
Let $p,q\in \Z[i]$ and $\alpha\in \R\setminus\Q$. If 
\begin{equation*}
    \abs{\frac{1}{2}+i\alpha-\frac{p}{q}}\leq\frac{1}{2\abs{q}^2},
\end{equation*}
then $\Re(p/q)=1/2$. Moreover, for any such approximation with $\Re(p/q)=1/2$, if $(p_m/q_m)_{m\geq 0}$ are the convergents of the continued fraction of $\alpha$,  then one of the following holds for some $n\geq 0$:
\begin{itemize}
    \item $\Im(p/q)=\frac{p_{n+1}-p_n}{q_{n+1}-q_n}$ and $q_{n+1}-q_n\equiv 2\pmod{4}$; or
    \item $\Im(p/q)=\frac{p_{n}+p_{n-1}}{q_{n}+q_{n-1}}$ and $q_n+q_{n-1}\equiv 2\pmod{4}$; or
    \item $\Im(p/q)=\frac{p_n}{q_n}$.
\end{itemize}

\end{lemma}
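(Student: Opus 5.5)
The plan has two parts: first show that $\Re(p/q)=1/2$ by an integrality argument, and then reduce to a real approximation problem for $\alpha$, using the reduced forms \eqref{eq:denominator_cases} together with Legendre's theorem and \Cref{lem:legendre_generalized}.

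\emph{Real part.} Write $p/q=x+iy$, so that $x=\Re(p\bar q)/\abs{q}^2$. Then
\begin{equation*}
\frac12-x=\frac{\abs{q}^2-2\Re(p\bar q)}{2\abs{q}^2},
\end{equation*}
and the numerator $\abs{q}^2-2\Re(p\bar q)$ is a rational integer. If $x\neq 1/2$, this integer is nonzero, so $\abs{1/2-x}\geq \frac{1}{2\abs{q}^2}$. Since
\begin{equation*}
\abs{\tfrac12+i\alpha-\tfrac pq}^2=(\tfrac12-x)^2+(\alpha-y)^2,
\end{equation*}
the hypothesis can then only hold if $\abs{1/2-x}=\frac{1}{2\abs{q}^2}$ and $\alpha=y$. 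But $y\in\Q$ while $\alpha\notin\Q$, a contradiction. Hence $x=1/2$.

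\emph{Reduction to the real line.} Now let $p/q=\frac12+i\frac rs$ with $\gcd(r,s)=1$ and $s>0$, and assume $p/q$ is in lowest terms (replacing $p,q$ by the reduced pair only decreases $\abs{q}$, so the inequality is preserved). The hypothesis becomes $\abs{\alpha-r/s}\leq\frac{1}{2\abs{q}^2}$. By \eqref{eq:denominator_cases}, $\abs{q}^2$ equals $4s^2$, $s^2$ or $s^2/2$, according to whether $s$ is odd, $s\equiv 0\pmod 4$ or $s\equiv 2\pmod 4$.

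In the first two cases we get $\abs{\alpha-r/s}\leq\frac{1}{2s^2}$. Equality is impossible because $\alpha$ is irrational, so Legendre's theorem gives $r/s=p_n/q_n$, which is the third alternative of the lemma.

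In the last case, $\abs{\alpha-r/s}\leq\frac1{s^2}$. Choose $n$ with $q_n\leq s<q_{n+1}$ and apply \Cref{lem:legendre_generalized}. This gives three possibilities for $r/s$:
\begin{itemize}
\item[] $r/s=p_n/q_n$, which is again the third alternative;
\item[] $r/s=\frac{p_{n+1}-p_n}{q_{n+1}-q_n}$;
\item[] $r/s=\frac{p_n+p_{n-1}}{q_n+q_{n-1}}$.
\end{itemize}

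\emph{Matching the congruence.} In the two mediant cases I must check that the displayed fractions are already reduced, with positive denominators. This holds because the determinants satisfy
\begin{equation*}
(p_{n+1}-p_n)q_n-(q_{n+1}-q_n)p_n=p_{n+1}q_n-q_{n+1}p_n=\pm1,
\end{equation*}
and similarly for $p_n+p_{n-1}$, $q_n+q_{n-1}$. The denominators are positive since $q_{n+1}>q_n$ in the relevant range; the case $a_{n+1}\geq 2$ guarantees this. Consequently $s=q_{n+1}-q_n$ or $s=q_n+q_{n-1}$ respectively, and the standing assumption $s\equiv 2\pmod 4$ yields exactly the stated congruences.

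The main obstacles are bookkeeping rather than new ideas: ruling out the boundary equality cases using irrationality, and handling small indices (e.g. $n=0$, where $q_{-1}=0$) so that the alternatives are stated with some $n\geq 0$.
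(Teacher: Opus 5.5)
Your proposal is correct and follows essentially the same route as the paper. It uses the same integrality argument to force $\Re(p/q)=1/2$, then the case split according to \eqref{eq:denominator_cases}, with Legendre's theorem when $s\not\equiv 2\pmod 4$ and \Cref{lem:legendre_generalized} when $s\equiv 2\pmod 4$. The differences are only cosmetic: you pass to lowest terms at the outset instead of writing $q=sk$ and separating $\abs{k}>1$ from $\abs{k}=1$. You also check explicitly, via the determinant $\pm 1$, that the mediant fractions are reduced, so that $s$ equals their denominator and the stated congruences follow; the paper leaves this step implicit.
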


\begin{proof}
Write $p=a+bi$, $q=c+di$. Then 

\begin{equation*}
    \Re\left(\frac{a+bi}{c+di}\right)=\frac{1}{2}\left(\frac{a+bi}{c+di}+\frac{a-bi}{c-di}\right)=\frac{ac+bd}{c^2+d^2}.
\end{equation*}

We see that $\frac{ac+bd}{c^2+d^2}$ must be equal to $\frac{1}{2}$. In fact, if this quotient is not equal to $1/2$, since $\alpha$ is irrational we get
\begin{equation*}
    \frac{1}{2(c^2+d^2)}\geq\abs{\frac{1}{2}+i\alpha-\frac{p}{q}}>\abs{\frac{ac+bd}{c^2+d^2}-\frac{1}{2}}\geq\frac{1}{2(c^2+d^2)}.
\end{equation*}
In particular, we have that $\frac{p}{q}=\frac{1}{2}+i\frac{r}{s}$ for some $r/s\in\Q$ with $\gcd(r,s)=1$. We will now divide the proof into the cases that appear in \eqref{eq:denominator_cases}. 

If $s$ is odd or $s\equiv 0\pmod{4}$, then $q=sk$ for some $k\in\Z[i]$. In particular, the inequality
\begin{equation*}
    \abs{\frac{1}{2}+i\alpha-\frac{p}{q}}=\abs{\alpha-\frac{r}{s}}\leq\frac{1}{2s^2|k|^2}
\end{equation*}
implies that $r/s$ is a convergent of $\alpha$ by Legendre's theorem. 

If $s\equiv 2\pmod{4}$, then $q=sk/(1+i)$ for some $k\in\Z[i]$. In particular,
\begin{equation*}
    \abs{\frac{1}{2}+i\alpha-\frac{p}{q}}=\abs{\alpha-\frac{r}{s}}\leq\frac{1}{s^2|k|^2}.
\end{equation*}
If $|k|>1$, then $r/s$ is again a convergent of $\alpha$. However, if $|k|=1$, then we have $\abs{\alpha-\frac{r}{s}}\leq\frac{1}{s^2}$, so by \Cref{lem:legendre_generalized} we have that either $\frac{r}{s}=\frac{p_n}{q_n}$, $\frac{r}{s}=\frac{p_{n+1}-p_n}{q_{n+1}-q_n}$, or $\frac{r}{s}=\frac{p_n+p_{n-1}}{q_n+q_{n-1}}$. 
\end{proof}

\section{Binary quadratic forms}\label{sec:binary_forms}

We begin with binary quadratic forms with real coefficients. Given two such forms $f(x,y)$ and $g(x,y)$, we say that they are \emph{$GL_2(\Z)$-equivalent} if there exists $A=\left(\begin{smallmatrix} a & b \\ c & d \end{smallmatrix}\right)\in GL_2(\Z)$ such that $f(ax+by,cx+dy)=g(x,y)$. Recall that a real quadratic form $f(x,y)=Ax^2+Bxy+Cy^2$ is indefinite, that is, takes both positive and negative values, if and only if its discriminant $B^2-4AC$ is positive.

Recall that a Markov number is the largest coordinate of a positive integer solution triple $(x,y,z)$ of the Markov equation $x^2+y^2+z^2=3xyz$. The \emph{multiplicity} of a Markov number $m$ is the number of distinct solutions $(x,y,z)$ with $x\leq y\leq z$ of Markov equation with $z=m$. Let $(z_r)_{r\geq1}$ be the sequence of the ordered Markov numbers with multiplicity, i.e., a non-decreasing sequence whose terms are all Markov numbers and such that the number of times each term appears is equal to its multiplicity. 

Suppose that $(x_r,y_r,z_r)$ is a positive integer solution of Markov's equation $x^2+y^2+z^2=3xyz$ with $\max\{x_r, y_r\}\leq z_r$. The solution is called singular if $x_r=y_r$. It is easy to see that the only singular solutions are $(1,1,1)$ and $(1,1,2)$.

Let $0\leq k_r<z_r$ and $l_r>0$ be the integers defined by
\begin{equation}\label{eq:k-l}
    k_r\equiv\frac{y_r}{x_r}\equiv-\frac{x_r}{y_r}\pmod{z_r} \qquad k_r^2+1=l_rz_r.
\end{equation}
We order $x_r$ and $y_r$ such that $k_r$ is minimal. In particular $0\leq 2k_r\leq z_r$.

\begin{definition}
    Let $z_r$ be a Markov number. Then the \emph{Markov form} $F_{r}$ is the binary quadratic form 
    \begin{equation}\label{eq:forms_F_r}
        z_rF_{r}=z_rx^2+(3z_r-2k_r)xy+(l_r-3k_r)y^2
    \end{equation}
    where $k_r$ and $l_r$ are defined on \eqref{eq:k-l}.
\end{definition}

The definition of $k_r$ and $l_r$ is asymmetric on $x_r,y_r$. If we exchange $x_r$ with $y_r$, the corresponding $k_r^\prime, l_r^\prime$ defined on \eqref{eq:k-l} yield the equivalent form $F_{r}^\prime(x,y)=F_{r}(x+2y,-y)$. 

In spite of the fact that it is still unknown whether there can exist different positive solutions $(x_r,y_r,z_r)$ and $(x_s,y_s,z_r)$ of Markov's equation with $\{x_r,y_r\}\neq\{x_s,y_s\}$, the triple determines uniquely $F_r$, up to permuting $(x_r,y_r)$. In fact, $F_r$ and $F_s$ are nonequivalent for $r\neq s$, because their roots are $GL_2(\Z)$-nonequivalent between each other because of \cite[Theorem 18]{Bombieri}. On the other hand, it is known \cite[Chapter II, Lemma 10]{Cassels} that $\abs{F_r(x,y)}\geq 1$ for all $(x,y)\in\Z^2\setminus\{(0,0)\}$.

We now turn to complex binary quadratic forms. Let $f(x,y)=Ax^2+Bxy+Cy^2$ with $A,B,C\in\C$ be such that
\begin{equation*}
    A\neq 0, \quad B^2-4AC\neq 0.
\end{equation*}

Recall that $\mathcal{N}(\Lambda)=\{1, 5, 29, 65, 169, 349,\dots\}$ is the set of positive integer solutions $x$ of Vulakh's \cite{Vulakh} equation $2x^{2}+y_{1}^{2}+y_{2}^{2}=4y_{1}y_{2}x$, which is equivalently written as
\begin{equation}\label{eq:Vulakh}
\left\{
\begin{aligned}
    x_1+x_2 &= 2y_1y_2,\\
    2x_1x_2 &= y_1^2+y_2^2.
\end{aligned}\right.
\end{equation}

\begin{figure}
\centering
\begin{forest}
for tree={
    draw=none,
    edge path={
        \noexpand\path[\forestoption{edge}]
        (!u.parent anchor) -- +(0,-30pt)
        -| (.child anchor)\forestoption{edge label};
    },
    l sep=12mm,
    s sep=12mm,
}   
[{$(1,1;1,1)$}
    [{$(1,5;1,3)$}
        [{$(1,65;3,11)$}
            []
            [$\vdots$]
            []
        ]
        [{$(5,349;3,59)$}
            []
            [$\vdots$]
            []
        ]
        [{$(5,29;1,17)$}
            []
            [$\vdots$]
            []
        ]
    ]
]
\end{forest}
\caption{Solutions of Vulakh's equation \eqref{eq:Vulakh}.}
\label{fig:Vulakh_tree}
\end{figure}

We will now state several facts about the solutions of Vulakh's equation, see \cite[Section 5]{Sch}. A solution $(\Lambda_1,\Lambda_2;M_1,M_2)$ of \eqref{eq:Vulakh} is called singular if $\Lambda_1=\Lambda_2$ or $M_1=M_2$. Two solutions $(\Lambda_1,\Lambda_2;M_1,M_2)$ and $(\Lambda_1^\ast,\Lambda_2^\ast;M_1^\ast,M_2^\ast)$ are neighbors if $\{\Lambda_1,\Lambda_2\}\cap\{\Lambda_1^\ast,\Lambda_2^\ast\}\neq\varnothing$ and  $\{M_1,M_2\}\cap\{M_1^\ast,M_2^\ast\}\neq\varnothing$. There is precisely one singular solution, namely $(1,1;1,1)$ and this solution has the unique neighbor $(1,5;1,3)$. Moreover, for any solution we have $\gcd(\Lambda_1,\Lambda_2)=\gcd(M_1,M_2)=1$. 

Given a solution $(\Lambda_1,\Lambda_2;M_1,M_2)$ of \eqref{eq:Vulakh} with $\Lambda_1\leq\Lambda_2$ and $M_1\leq M_2$, it is easy to see that
\begin{equation}\label{eq:M_2>Lambda_1}
    \Lambda_2\geq M_1M_2, \quad M_2\geq M_1\Lambda_1,
\end{equation}
with strict inequality when the solution is not singular.

Every non singular solution $(\Lambda_1,\Lambda_2;M_1,M_2)$ with $\Lambda_1<\Lambda_2$ and $M_1<M_2$ has precisely four different neighbors solutions 
\begin{equation}\label{eq:neighbors_solutions}
    (\Lambda_i,\Lambda_{ij};M_j,M_{ij}), \quad i,j\in\{1,2\}, 
\end{equation}
where
\begin{equation*}
	\Lambda_{ij}=\Lambda_i(8M_j^2-1)-(\Lambda_1+\Lambda_2), \quad M_{ij}=4\Lambda_iM_j-M_k,
\end{equation*}
with $k\in\{1,2\}\setminus\{j\}$.

For each nonsingular solution we associate a quintuple $(\varepsilon,l_1,l_2;m_1,m_2)$ as follows: for $(1,5;1,3)$ we associate $(1,0,2;1,1)$ and given
\begin{equation*}
    (\Lambda_1,\Lambda_2;M_1,M_2)\mapsto(\varepsilon,l_1,l_2;m_1,m_2),
\end{equation*}
then for $(i,j)=(1,2),(2,1),(2,2)$ we associate
\begin{equation*}
	(\Lambda_i,\Lambda_{ij};M_j,M_{ij})\mapsto(\varepsilon_{ij},l_i,l_{ij};m_j,m_{ij}), 
\end{equation*}
where
\begin{align*}
    \varepsilon_{12}&=\varepsilon_{21}=\varepsilon, \varepsilon_{22}=-\varepsilon, \\
    l_{ij}&=(\Lambda_{ij}m_j-\varepsilon_{ij}M_{ij})/M_j, \\
    m_{ij}&=(M_{ij}l_i+\varepsilon_{ij}M_j)/\Lambda_i.
\end{align*}

Given a solution $(\Lambda_1,\Lambda_2;M_1,M_2)$, $\Lambda_1<\Lambda_2$, $M_1<M_2$ with respective quintuple $(\varepsilon,l_1,l_2;m_1,m_2)$, it can be seen by induction that it holds
\begin{equation}\label{eq:determinant_equation}
    \Lambda_1l_2-\Lambda_2l_1=2\varepsilon M_1^2 \quad M_1m_2-M_2m_1=-2\varepsilon\Lambda_1,
\end{equation}
\begin{equation}\label{eq:determinant_equation2}
    \Lambda_2m_2-M_2l_2=-\varepsilon M_1,
\end{equation}
and that for $i=1,2$ it holds
\begin{equation}\label{eq:congruences}
	l_i \equiv 0 \pmod{2}, \qquad m_i\equiv 1\pmod{2},
\end{equation}
\begin{equation}\label{eq:l_less_than_Lambda}
	0\leq l_i\leq \Lambda_i, \qquad 0\leq m_i\leq M_i,
\end{equation}	
and
\begin{equation}\label{eq:congruences2}
	l_i^2+1\equiv 0\pmod{\Lambda_i}, \qquad m_i^2+2\equiv 0 \pmod{M_i},
\end{equation}

In particular we can denote for $i=1,2$, the positive integers
\begin{equation}\label{eq:lambda_i_mu_i}
    \lambda_i:=\frac{l_i^2+1}{\Lambda_i}, \qquad \mu_i:=\frac{m_i^2+2}{M_i}.
\end{equation}

Now we define the families of binary quadratic rational forms $g_\Lambda$ and $h_M$ by
\begin{equation}\label{eq:forms}
\begin{aligned}
    \Lambda g_\Lambda(x,y)
        &= \Lambda x^2+(4\Lambda-2l)xy+(\lambda-4l)y^2, \\
    Mh_M(x,y)
        &= Mx^2+(4M-2m)xy+(\mu-4m)y^2.
\end{aligned}
\end{equation}

The $C$--expansions of the roots of these families of forms were studied by Schmidt in \cite{SchmidtCforms}. It is likely that his arguments also imply that these roots are pairwise nonequivalent. In any case, Schmidt proved that the forms $g_\Lambda$ and $h_M$ are in one-to-one correspondence with a certain family of Markov symbols. As a consequence, all these forms are pairwise distinct. 

On the other hand, it is unknown to the authors whether the sequence $\mathcal{N}(\Lambda)$ satisfies an asymptotic formula analogous to the one obtained by Zagier \cite{Zagier} for the classical Markov numbers. Likewise, it is unclear whether there is a natural analogue of the Markov uniqueness conjecture in this setting.

It can be shown (see \cite[Lemma 5.7]{Sch}) that 
\begin{equation}\label{eq:special_values_of_g_and_h}
\begin{aligned}
    g_{\Lambda_2}(l_2,\Lambda_2)&=g_{\Lambda_2}(l_2-4\Lambda_2,\Lambda_2)=1, \\
    h_{M_2}(m_2,M_2)&=h_{M_2}(m_2-4M_2,M_2)=2.
\end{aligned}
\end{equation}

Define the binary real forms 
\begin{equation*}
    g_\Lambda^\prime(y,z):=\Lambda^2 g_\Lambda(x,y), \qquad h_M^\prime(y,w)=M^2h_M(x,y)
\end{equation*}
where $z=\Lambda x- ly$ and $w=Mx-my$. So
\begin{equation*}
    g_\Lambda^\prime(y,z)=y^2+4\Lambda yz+z^2, \qquad h_M^\prime(y,w)=2y^2+4M yw+w^2. 
\end{equation*}
Denote
\begin{equation*}
    \Z_j^2=\{(x,y)\in\Z^2\setminus\{(0,0)\}:\gcd(x,y)=1,x+y\equiv j\pmod{2}\}.
\end{equation*}
We have \cite[Lemma 5.9]{Sch}
\begin{lemma}\label{lem:schmidt_59}
For any of the forms $g_\Lambda$ and $h_M$ defined in \eqref{eq:forms} we have
\begin{equation*}
    \abs{g_\Lambda(x,y)}\geq 2 \quad\text{and}\quad \abs{h_M(x,y)}\geq 2, \qquad \text{for all }(x,y)\in\Z_0^2,
\end{equation*}
\begin{equation*}
    \abs{g_\Lambda(x,y)}\geq 1 \quad\text{and}\quad \abs{h_M(x,y)}\geq 1, \qquad \text{for all }(x,y)\in\Z_1^2.
\end{equation*}
\end{lemma}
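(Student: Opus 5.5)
The plan is to follow the classical Cassels-style descent for Markov forms, adapted to Schmidt's forms and to the parity classes $\Z_0^2,\Z_1^2$. First I reduce to integer statements. Using $\Lambda\equiv 1\pmod 2$ (every element of $\mathcal{N}(\Lambda)$ is odd), \eqref{eq:congruences} and \eqref{eq:lambda_i_mu_i}, the form $\Lambda g_\Lambda$ has integer coefficients $\Lambda$, $4\Lambda-2l$ and $\lambda-4l$. Here $\lambda$ is odd, and $\mu$ is odd because $m$ is odd.

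Reducing modulo $2$ gives $\Lambda g_\Lambda(x,y)\equiv x^2+y^2\pmod 2$. So on $\Z_0^2$ the integer $\Lambda g_\Lambda(x,y)$ is even, and on $\Z_1^2$ it is odd; the same holds for $Mh_M$. The claims therefore become
\[
\abs{\Lambda g_\Lambda(x,y)}\ge 2\Lambda \text{ on } \Z_0^2, \qquad \abs{\Lambda g_\Lambda(x,y)}\ge \Lambda \text{ on } \Z_1^2,
\]
and analogously for $h_M$. I would work with $g'_\Lambda(y,z)=y^2+4\Lambda yz+z^2$, which has discriminant $4(4\Lambda^2-1)$ and satisfies $\Lambda^2 g_\Lambda(x,y)=g'_\Lambda(y,z)$ with $z=\Lambda x-ly$. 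In these coordinates the statement reads: $\abs{y^2+4\Lambda yz+z^2}\ge\Lambda^2$, or $\ge 2\Lambda^2$ in the even class, for all $(y,z)$ in the sublattice $z\equiv -ly\pmod\Lambda$ coming from coprime $(x,y)$.

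Next I use the minimum principle for indefinite forms. If an indefinite form takes a value $N$ with $0<\abs N<\sqrt{D}/2$ at a primitive vector, then that vector is a successive "best approximation" to a root. Equivalently, $\pm N$ appears as a first coefficient of a reduced form in the cycle of the form. So it suffices to know the cycle of reduced forms equivalent to $g_\Lambda$, or the continued fraction period of its root $\theta$, and to read off their first coefficients together with the parity class of the corresponding vectors. The parity class is tracked by the product of the transition matrices modulo $2$. By \eqref{eq:special_values_of_g_and_h} the values $1$ (for $g$) and $2$ (for $h$) are attained, so the bound is sharp and only the inequality needs proof.

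The main step is an induction along the ternary tree of Figure~\ref{fig:Vulakh_tree}. The base cases $(1,1;1,1)$ and $(1,5;1,3)$ are checked directly. For the inductive step, take a solution $(\Lambda_1,\Lambda_2;M_1,M_2)$ with neighbours $(\Lambda_i,\Lambda_{ij};M_j,M_{ij})$. I would use the determinant relations \eqref{eq:determinant_equation}, \eqref{eq:determinant_equation2} to produce explicit matrices in $GL_2(\Z)$ relating the root of $g_{\Lambda_{ij}}$ to the roots of $g_{\Lambda_i}$ and $h_{M_j}$. The goal is to show that the period of $g_{\Lambda_{ij}}$ (respectively $h_{M_{ij}}$) is a concatenation of the periods attached to the parents, in analogy with the Cohn-word concatenation for Markov forms. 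The first coefficients of the reduced forms in the new cycle are then controlled by comparing two-sided expansions: an entry deep inside a parent block has a value close to the corresponding parent value, rescaled by the factor $\Lambda_{ij}/\Lambda_i$. I would make this comparison quantitative with \eqref{eq:compare_continued_fractions} and the congruences \eqref{eq:congruences2}.

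I expect the main obstacle to be the junctions between the concatenated blocks. There the comparison with the parents is weakest, and the parity bookkeeping matters: the value must stay $\ge 2$ exactly when the vector lies in $\Z_0^2$, while values close to $1$ are allowed only in $\Z_1^2$. At these junctions I would first try a Vieta-jumping argument on $g'_\Lambda$. The involution $(y,z)\mapsto(y,-4\Lambda y-z)$, composed with the automorph of $g_\Lambda$, should send any hypothetical small value in the wrong parity class to a smaller one on a parent form, contradicting the inductive hypothesis. The inequalities \eqref{eq:M_2>Lambda_1} guarantee that this descent terminates.
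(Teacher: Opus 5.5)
Your first step is correct and worth keeping. Since $\Lambda,\lambda,M,\mu$ are odd, $\Lambda g_\Lambda(x,y)\equiv Mh_M(x,y)\equiv x^2+y^2\pmod 2$. Hence the values of $g_\Lambda$ lie in $\frac{2}{\Lambda}\Z$ on $\Z_0^2$ and in $\frac{1}{\Lambda}(2\Z+1)$ on $\Z_1^2$, and similarly for $h_M$. You should say explicitly that this is what makes the reduction-theory step applicable. Because $\sqrt{\Disc(g_\Lambda)}/2=\sqrt{4-\Lambda^{-2}}<2$, the $\Z_0^2$ bound lies \emph{above} the threshold of the minimum principle. A violation must still appear as a leading coefficient in the reduced cycle only because its modulus is at most $2-2/\Lambda<\sqrt{4-\Lambda^{-2}}$ (resp.\ $2-2/M<\sqrt{4-2M^{-2}}$).

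After that, the proposal is a programme rather than a proof, and what is left open is the whole content of the lemma. (The paper does not reprove the lemma; it quotes \cite[Lemma 5.9]{Sch}.) You never determine the reduced cycle of $g_{\Lambda_{ij}}$ or $h_{M_{ij}}$. You also give no exact relation between its values and those of the parent forms, so there is no inductive step.

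None of the three tools you name supplies that step.
\begin{itemize}
\item The concatenation rule is only a goal, and it is not the naive one. For $(5,29;1,17)$ the positive root of $g_{29}$ is $[0;\overline{2,2,2,1,1,3}]$. The positive root of $h_{17}$ is $[0;\overline{2,2,3,1,1,2,2,1,1,3}]$, in which the period $(2,1,1,3)$ of $g_5$ occurs twice, once reversed. So a precise block rule for $g$- and $h$-children would have to be formulated and proved.
\item Comparing an entry with ``the corresponding parent value'' is only approximate, whereas the bounds are attained by \eqref{eq:special_values_of_g_and_h}. To conclude, you would need the error to be below the spacing $2/\Lambda_{ij}$ of admissible values, or you would need sign control. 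Near the junctions you have neither. There is also no rescaling by $\Lambda_{ij}/\Lambda_i$: all these forms are monic with discriminant close to $16$.
\item The descent has no mechanism. The map $(y,z)\mapsto(y,-4\Lambda y-z)$ is \eqref{eq:identity_F} up to sign, an improper symmetry of $g'_\Lambda$. Composed with automorphs, it preserves the value and never changes $\Lambda$, so it cannot send a small value to a smaller one on $g_{\Lambda_i}$ or $h_{M_j}$. Without an actual map to the parents, \eqref{eq:M_2>Lambda_1} gives no termination. The paper's own jump $(s,u)\mapsto(u,-ts)$ in \Cref{lem:theta} likewise stays on the same form and takes this very lemma as input.
\end{itemize}
What is missing is an exact, not approximate, relation between the child form and its parents. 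It must control the child's values on a fundamental domain of its automorph group by values of $g_{\Lambda_i}$ and $h_{M_j}$, with the parity classes tracked.
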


We now adapt the method of \cite{Harcos}. Since the arguments apply uniformly to the families of forms $g_\Lambda$ and $h_M$, as well as to the forms $F_r$ defined in \eqref{eq:forms_F_r}, it is convenient to formulate them in a more general setting.

Let $m,l,n,t$ be positive integers with $n\geq 2$ and $k$ an integer such that $m\geq k\geq 1$, $n>t+1/m$ and $k^2+t=lm$. Assume moreover that $n^2m^2-4t$ is not a perfect square. Define the binary quadratic form $F(x,y)$ by 
\begin{equation}\label{eq:form_F}
    mF(x,y):=mx^2+(nm-2k)xy+(l-nk)y^2.
\end{equation}

Let $F(x,y)=(x-\theta y)(x-\Theta y)$ with $\Theta<\theta$ and set
\begin{equation*}
    \Delta := \frac{n+\sqrt{n^2-4tm^{-2}}}{2}.
\end{equation*}
Note that $\theta\in\R\setminus\Q$ and $\Theta\in\R\setminus\Q$.

\begin{lemma}\label{lem:roots_of_F}
    The roots $\Theta<\theta$ of the quadratic form \eqref{eq:form_F} satisfy
    \begin{equation}\label{eq:theta_and_Theta}
        \theta = \frac{k}{m}-n+\Delta, \qquad \Theta = \frac{k}{m}-\Delta.
    \end{equation}
    Moreover $-n<\Theta<-(n-1)<0<\theta<1$.
\end{lemma}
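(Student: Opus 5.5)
Since $F(x,y)=(x-\theta y)(x-\Theta y)$ is monic in $x$, the roots $\theta,\Theta$ are the two roots of $F(X,1)=X^2+\bigl(n-\tfrac{2k}{m}\bigr)X+\tfrac{l-nk}{m}$. We will use the quadratic formula and then estimate $\Delta$.

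\textbf{Step 1 (discriminant).} The sum of the roots is $\frac{2k}{m}-n$ and the product is $\frac{l-nk}{m}$. The discriminant is
\begin{equation*}
    \Bigl(n-\frac{2k}{m}\Bigr)^2-\frac{4(l-nk)}{m}=n^2-\frac{4nk}{m}+\frac{4k^2}{m^2}-\frac{4l}{m}+\frac{4nk}{m}=n^2+\frac{4(k^2-lm)}{m^2}=n^2-\frac{4t}{m^2}.
\end{equation*}
Here we used $k^2+t=lm$. This is positive: from $n>t+1/m$ we get $n>t/m$, so $nm>t\ge 1$, and hence $n^2m^2>t^2\ge\dots$. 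More simply, $n\ge2$ and $n>t$ give $n^2m^2\ge 2nm^2>4t$. Since $n^2m^2-4t$ is not a perfect square, the roots are real and irrational.

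\textbf{Step 2 (the formulas).} The roots are
\begin{equation*}
    \frac{k}{m}-\frac{n}{2}\pm\frac{1}{2}\sqrt{n^2-4tm^{-2}}.
\end{equation*}
With the $+$ sign this equals $\frac km-\frac n2+\bigl(\Delta-\frac n2\bigr)=\frac km-n+\Delta=\theta$. With the $-$ sign it equals $\frac km-\frac n2-\bigl(\Delta-\frac n2\bigr)=\frac km-\Delta=\Theta$. The larger root corresponds to the $+$ sign, as required.

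\textbf{Step 3 (inequalities).} This is where the hypotheses are actually used. Write $s=\sqrt{n^2-4t/m^2}<n$, so $\Delta<n$. For a lower bound, square: we want $s>n-2+\epsilon$ for a suitable $\epsilon$. Indeed
\begin{equation*}
    n^2-\frac{4t}{m^2}>(n-2)^2 \iff 4n-4>\frac{4t}{m^2},
\end{equation*}
which holds since $n-1>t\ge t/m^2$. A sharper version is needed, namely $\Delta>n-1+\frac{t}{m}\cdot\frac1{\dots}$, to compare with $k/m$. Let us check each bound directly.

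First, $\theta>0 \iff \Delta>n-\frac km$. Since $k\ge1$ it suffices to show $\Delta>n-\frac1m$, i.e. $s>n-\frac2m$. Squaring, this is $-\frac{4t}{m^2}>-\frac{4n}{m}+\frac4{m^2}$, i.e. $nm>t+1$, i.e. $n>\frac tm+\frac1m$. This follows from $n>t+\frac1m\ge \frac tm+\frac1m$.

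Second, $\theta<1 \iff \Delta<n+1-\frac km$. Since $k\le m$, it suffices to have $\Delta<n$, which is true because $s<n$.

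Third, $\Theta<-(n-1) \iff \Delta>n-1+\frac km$. Here $k\le m$ would need $\Delta>n$, which is false, so we must use $2k\le m$. This holds in the applications: for $F_r$ we have $2k_r\le z_r$, and for $g_\Lambda,h_M$ the analogous normalisation should hold, which needs checking. We then need $\Delta>n-\frac12$, i.e. $s>n-1$, i.e. $2n-1>\frac{4t}{m^2}$, which is fine. I expect this to be the main obstacle: the stated hypotheses allow $k$ up to $m$, so either a hidden constraint on $k$ (like $k\le m/2$) is intended, or one argues using $k^2+t=lm$ with $l$ positive. I would first try to extract the needed bound $\frac km<\Delta-n+1$ from $l\ge1$ together with $n>t+\frac1m$, and otherwise invoke $2k\le m$.

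Fourth, $\Theta>-n \iff \Delta<n+\frac km$, which is immediate from $\Delta<n$.

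Finally, $-(n-1)<0$ holds because $n\ge2$.
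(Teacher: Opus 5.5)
Your Steps 1--2 are correct, and so are the bounds $\theta<1$, $\Theta>-n$ and $\theta>0$. Your proof of $\theta>0$ uses $k\ge 1$ to reduce to $\Delta>n-\frac1m$, i.e.\ $nm>t+1$. It is uniform in $k$ and tidier than the paper's, which splits into $k=m$ and $k<m$ and, for $k<m$, goes through $kn>l$. There is one minor slip: the chain $n^2m^2\ge 2nm^2>4t$ fails for $n=4$, $m=1$, $t=2$. Positivity of the discriminant follows instead from $nm>tm+1\ge t+1\ge 2\sqrt t$.

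The gap is $\Theta<-(n-1)$, and it cannot be closed under the stated hypotheses because it is false there. If $k=m$, then $\Theta=1-\Delta>1-n$, since $\Delta<n$. This case really occurs: $h_1(x,y)=x^2+2xy-y^2$ has $m=k=1$, $t=2$, $l=3$, $n=4$. It satisfies every hypothesis, yet $\Theta=-1-\sqrt2>-3$. So your fallback of deriving $\frac km<\Delta-n+1$ from $l\ge 1$ cannot work. The normalization $2k\le m$ you propose to import fails exactly for $h_1$, and it is not established for the other forms.

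What you missed is that $2k\le m$ is far more than needed. When $k\le m-1$, the inequality $\Delta>n-\frac1m$ that you already proved gives at once
$\Theta=\frac km-\Delta\le 1-\frac1m-\Delta<1-n$. This is precisely the paper's argument.

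The paper's proof has the same hole as the statement: in the case $k=m$ it only checks $\theta>0$ and never addresses $\Theta<-(n-1)$. The correct hypothesis is $1\le k\le m-1$. This holds for every $g_\Lambda$ and $h_M$ with $\Lambda,M>1$. The reason is the parities ($l$ even, $m$ odd, $\Lambda$ and $M$ odd) together with $\Lambda\mid l^2+1$ and $M\mid m^2+2$: these rule out $l\in\{0,\Lambda\}$ unless $\Lambda=1$, and force $M=1$ if $m=M$. With that hypothesis, your proof is complete once the $2k\le m$ step is replaced by the one-line bound above.
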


\begin{proof}
    From \eqref{eq:form_F}, we can write
    \begin{align*}
        F(x,y)&=x^2+\left(n-\frac{2k}{m}\right)xy+\left(\frac{l-nk}{m}\right)y^2 \\
        &=\left(x+\left(\frac{n}{2}-\frac{k}{m}\right)y\right)^2+\left(\left(\frac{l-nk}{m}\right)-\left(\frac{n}{2}-\frac{k}{m}\right)^2\right)y^2 \\
        &=\left(x+\left(\frac{n}{2}-\frac{k}{m}\right)y\right)^2-\left(\frac{n^2}{4}-\frac{t}{m^2}\right)y^2. 
    \end{align*}
    Hence we obtain \eqref{eq:theta_and_Theta}.

    It is easy to see that $\theta<k/m\leq 1$ and since $\Delta<n$, it follows $\Theta\geq-\Delta>-n$. If $m=k$, then $\theta>0$ is equivalent to $\sqrt{n^2-4tm^{-2}}>n-2$, which is true whenever $n>1+tm^{-2}$, which holds by hypothesis because $t+m^{-1}\geq 1+tm^{-2}$. Now assume that $m\neq k$, so $k\leq m-1$. Hence $\Theta\leq 1-\frac{1}{m}-\Delta$. Notice that $\Theta<-(n-1)$ will then follow from
    \begin{equation*}
        n-\frac{2}{m}<\sqrt{n^2-\frac{4t}{m^2}},
    \end{equation*}
    which is equivalent to $\frac{t+1}{m}< n$. Finally, notice that $\theta>0$ is equivalent
    \begin{equation*}
        \frac{n^2}{4}-\frac{t}{m^2} \geq\left(\frac{n}{2}-\frac{k}{m}\right)^2=\frac{n^2}{4}-\frac{kn}{m}+\frac{lm-t}{m^2},
    \end{equation*}
    which in turn is equivalent to $kn\geq l$, which is equivalent to $nm\geq lm/k=k+t/k$ which is true because $nm> tm+1\geq tk+1\geq k+t/k$.
\end{proof}

    Now let us define the normalized form
    \begin{equation}\label{eq:normalization_F}
        F^\prime(y,z):=m^2F(x,y)=ty^2+nmyz+z^2
    \end{equation}
    where $z=mx-ky$. Note that we have
    \begin{equation}\label{eq:identity_F}
        F^\prime(y,z)=F^\prime(-y,z+nmy).
    \end{equation}
    
    For $s\in\mathbb Z\setminus\{0\}$, define
    \begin{equation*}
    	c_4(s):=\begin{cases}
    		1, & \text{if } s\equiv 1\pmod{2}, \\
    		4, & \text{if } s\equiv 0\pmod{4}, \\
    		8, & \text{if } s\equiv 2\pmod{4}.
    	\end{cases}
    \end{equation*}
    This quantity is defined according to \eqref{eq:denominator_cases}, because if $\frac{p}{q}=\frac{1}{2}+i\frac{r}{s}$ with $\gcd(p,q)=\gcd(r,s)=1$, then 
    \begin{equation}\label{eq:c_4(s)}
    	c_4(s)=\frac{4s^2}{\abs{q}^2}.
    \end{equation}

	Now we specialize to the case we are interested, which is $n=4$, $t\in\{1,2\}$ and $k,m$ satisfying \eqref{eq:congruences}. 

\begin{lemma}\label{lem:theta}
    Let $F=g_\Lambda$ or $F=h_M$ be one of the forms defined in \eqref{eq:forms} and let $\theta>0$ denote the positive root of $F$, that is, $F(\theta,1)=0$. Then, for every $r,s\in\Z$ with $\gcd(r,s)=1$ and $s\neq 0$,
    \begin{equation*}
        \Delta s^2\abs{\theta - \frac{2r}{s}}\geq c_4(s),
    \end{equation*}
    with equality if and only if $(2r,s)=(k,m)$. 
\end{lemma}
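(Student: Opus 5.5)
The plan is to reduce everything to lower bounds on $\abs{F(2r,s)}$ supplied by \Cref{lem:schmidt_59}, and then to control the distance from $2r/s$ to the conjugate root $\Theta$. Without loss of generality $s>0$. Since $F(x,y)=(x-\theta y)(x-\Theta y)$, we have
\begin{equation*}
    s^2\abs{\theta-\frac{2r}{s}}=\frac{\abs{F(2r,s)}}{\abs{2r/s-\Theta}}.
\end{equation*}
Here $n=4$, and $t=1$ for $g_\Lambda$, $t=2$ for $h_M$. The first step is to record a parity case analysis matching the definition of $c_4(s)$.

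\emph{Case $s$ odd.} Then $(2r,s)\in\Z_1^2$, so $\abs{F(2r,s)}\geq 1$.

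\emph{Case $s\equiv 0\pmod 4$.} Then $r$ is odd and $(r,s/2)\in\Z_1^2$, so $\abs{F(2r,s)}=4\abs{F(r,s/2)}\geq 4$.

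\emph{Case $s\equiv 2\pmod 4$.} Then $r$ and $s/2$ are both odd, so $(r,s/2)\in\Z_0^2$ and $\abs{F(2r,s)}\geq 8$.

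In all three cases $\abs{F(2r,s)}\geq c_4(s)$. The lemma therefore follows whenever $\abs{2r/s-\Theta}\leq\Delta$, that is, whenever $2r/s\in[k/m-2\Delta,\,k/m]$, using $\Theta=k/m-\Delta$ from \Cref{lem:roots_of_F}.

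Next I would dispose of the two remaining ranges.

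\emph{Range $2r/s<k/m-2\Delta$.} By \eqref{eq:theta_and_Theta}, $\theta-2r/s>2\Delta-4+\Delta-\Delta\geq\Delta-4+\Delta$, which is essentially at least $4$. Hence $\Delta s^2\abs{\theta-2r/s}\geq 4\Delta>8\geq c_4(s)$, because $\Delta>2$ when $n=4$ and $t\leq 2\leq m^2\cdot(\text{something})$. I would check the small-$m$ cases ($m=1$) directly.

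\emph{Range $2r/s>k/m$.} This is the main obstacle. I would pass to the normalized form \eqref{eq:normalization_F}. Put $z:=2rm-ks$, a positive integer in this range. Then
\begin{equation*}
    m^2F(2r,s)=ts^2+4msz+z^2, \qquad 2r/s-\Theta=\Delta+\frac{z}{sm},
\end{equation*}
so that
\begin{equation*}
    \Delta s^2\abs{\theta-\frac{2r}{s}}=\frac{\Delta s\,(ts^2+4msz+z^2)}{m(\Delta sm+z)}.
\end{equation*}
I would compare this with $c_4(s)$ by splitting on the size of $z$ relative to $sm$.

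\emph{If $z\leq \Delta sm$.} The denominator is at most $2\Delta s m^2$. The numerator is at least $\Delta s\cdot m^2\abs{F(2r,s)}\geq \Delta s\,m^2c_4(s)$, where I use the integrality bound from the parity analysis above. Inserting the exact value of $\abs{F(2r,s)}$ should supply the remaining factor. Failing that, I would fall back on the trivial bound $s^2(2r/s-\theta)\geq s^2(4-\Delta)+sz/m$, which already suffices when $s\geq 2m$.

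\emph{If $z>\Delta sm$.} The quadratic term $z^2$ dominates the numerator, and the inequality is easy.

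I expect the delicate sub-case to be $s<2m$ with $z$ small, where all terms are comparable. There I would use that $ts^2+4msz+z^2$ must be divisible by $m^2$, together with \eqref{eq:congruences2}. The aim is to force $4msz+z^2\geq m^2c_4(s)$, which should close the estimate.

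Finally, for the equality statement I would trace where each inequality is strict. Equality forces $\abs{2r/s-\Theta}=\Delta$, i.e. $2r/s=k/m$, so $z=0$. It also forces $\abs{F(2r,s)}=c_4(s)$. Since $F(k,m)=t$, equality can occur only for $t=1$ with $m$ odd and $k$ even, and then $(2r,s)=(k,m)$. Conversely, for $(2r,s)=(k,m)$ we get $c_4(m)=1=F(k,m)$, and the equality is exact.
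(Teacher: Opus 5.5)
\paragraph{Gap in the case $2r/s>k/m$.}
The easy ranges are handled correctly. For $k/m-2\Delta\le 2r/s\le k/m$, the bound $\abs{F(2r,s)}\ge c_4(s)$ together with $\abs{2r/s-\Theta}\le\Delta$ settles it, and the far ranges and $s\ge 2m$ are routine. The gap is the range $2r/s>k/m$ with $s<2m$. This is the actual content of the lemma, and there you only say what ``should'' work. With $z=2rm-ks>0$, the claim in this range is equivalent to
\begin{equation*}
    \Delta s\bigl(F'(s,z)-c_4(s)m^2\bigr)\geq c_4(s)\,mz,\qquad F'(s,z)=ts^2+4msz+z^2=m^2F(2r,s),
\end{equation*}
that is, to $F(2r,s)\geq c_4(s)\bigl(1+\frac{z}{\Delta sm}\bigr)$. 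So a point with $F(2r,s)=c_4(s)$ and $z>0$ would violate the lemma. Hence no lower bound on $\abs{F(2r,s)}$ at this single point can supply ``the remaining factor''. The extra input you propose does not help either:
\begin{itemize}
\item $m^2F(2r,s)$ is in general divisible only by $m$, not by $m^2$, since only $mF$ has integer coefficients (e.g.\ $g_5(2,3)=53/5$).
\item The target $4msz+z^2\ge m^2c_4(s)$, even if proved, only leaves you needing $\Delta ts^3>c_4(s)mz$. This is false, e.g., for $s=1$ as soon as $mz\ge 8$.
\end{itemize}

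\paragraph{The missing idea: a second lattice point.}
Because $k^2+t=lm$, one has $kz\equiv ts\pmod m$. So $r':=(kz-ts)/m\in\Z$, and \eqref{eq:normalization_F} gives
\begin{equation*}
    m^2F(r',z)=F'(z,-ts)=t\bigl(ts^2-4msz+z^2\bigr).
\end{equation*}
Apply \Cref{lem:schmidt_59} to $(r',z)$, with parity bookkeeping according to $s\bmod 4$ and $t$ (this is where \eqref{eq:congruences} is used). This yields
\begin{equation*}
    \abs{ts^2-4msz+z^2}\ge c_4(s)m^2 .
\end{equation*}
\begin{itemize}
\item If this quantity is positive, then $F'(s,z)-c_4(s)m^2\ge 8msz$, and you conclude from $8\Delta s^2>c_4(s)$.
\item If it is negative, then $4msz\ge ts^2+z^2+c_4(s)m^2$. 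Hence $F'(s,z)-c_4(s)m^2\ge 2(ts^2+z^2)$ and $4sz>c_4(s)m$, and $\Delta>2$ finishes.
\end{itemize}
This is the paper's argument; in its notation the second point is $(s',u')=(u,-ts)$.

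\paragraph{The equality case.}
Your equality analysis has a slip: $2r/s=k/m$ forces $(2r,s)=(k,m)$ only when $k$ is even. For $F=h_M$ ($t=2$, $k$ odd) it forces $(2r,s)=(2k,2m)$. There $F(2k,2m)=4t=8=c_4(2m)$, so equality holds. For example, for $h_1=x^2+2xy-y^2$ and $(r,s)=(1,2)$ the left side is exactly $4(2+\sqrt2)(2-\sqrt2)=8$. So the equality clause is actually false for $h_M$; the paper's treatment of $u=0$ loses the factor $t$ in the same way. Only the non-strict inequality is used in \Cref{thm:roots_inequality_theorem}, so the later results are unaffected.
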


\begin{proof}
    First observe that if $2r/s\not\in[0,1]$, then, since $\theta\in(0,1)$, we will have that $s^2\abs{\theta - \frac{2r}{s}}\geq 2|s|\geq c_4(s)$. Since $\Delta\geq 2$, the desired inequality follows immediately. Therefore, we may assume that $2r/s\in[0,1]$, and hence, after replacing $(r,s)$ by $(-r,-s)$ if necessary, that $r,s\geq 0$. 
 
 	By \Cref{lem:schmidt_59}, we have $\abs{F(2x,y)}\geq c_4(y)$ for all $x,y\in\Z$ with $\gcd(x,y)=1$. Indeed, if $y$ is odd, then $(2x,y)\in\Z_1^2$, and the claim follows from the first inequality in \Cref{lem:schmidt_59}. If $y\equiv 0\pmod 4$, then $\abs{F(2x,y)}=4\abs{F(x,y/2)}\geq 4 =  c_4(y)$. Finally, if $y\equiv 2\pmod 4$, then $x$ is odd because $\gcd(x,y)=1$, so $(x,y/2)\in\Z_0^2$, and therefore $\abs{F(2x,y)}=4\abs{F(x,y/2)}\geq 8=c_4(y)$.
        
    The desired inequality is equivalent to
    \begin{equation*}
        \frac{\Delta}{c_4(s)}\abs{F(2r,s)}\geq \abs{\Theta-\frac{2r}{s}}.
    \end{equation*}
    By \Cref{lem:roots_of_F}, and since $2r/s\geq 0>\Theta$, the previous inequality is equivalent to \begin{equation*}
        \Delta\left(\frac{\abs{F(2r,s)}}{c_4(s)}-1\right)\geq \frac{2r}{s}-\frac{k}{m}.
    \end{equation*}
    Set 
    \begin{equation*} 
        u:=2mr-ks. 
    \end{equation*} 
    Multiplying both sides by $m^2$ and using \eqref{eq:normalization_F}, we see that it suffices to prove \begin{equation}\label{eq:inequality_to_be_proven1}
        \Delta\left(\frac{\abs{F^\prime(s,u)}}{c_4(s)}-m^2\right)\geq \frac{mu}{s}.
    \end{equation}
    Moreover, \eqref{eq:normalization_F} gives
    \begin{equation}\label{eq:inequality_c_4_u}
    	\abs{F^\prime(s,u)}=m^2\abs{F(2r,s)}\geq m^2c_4(s).	
    \end{equation}

    If $u=0$, then equality holds if and only if $(2r,s)=(k\sqrt{c_4(s)},m\sqrt{c_4(s)})$. Since $\sqrt{c_4(s)}$ is an integer only when $c_4(s)\in\{1,4\}$ and $2m\equiv 2\pmod{4}$, the tuple $(k\sqrt{c_4(s)},m\sqrt{c_4(s)})$ has integral coordinates if and only if $c_4(s)=1$. Consequently, equality can occur only when $c_4(s)=1$. Since the left-hand side of \eqref{eq:inequality_to_be_proven1} is non-negative, we may therefore assume that $u>0$, and we shall prove that the inequality in \eqref{eq:inequality_to_be_proven1} is strict. 
    
    By \eqref{eq:congruences} and \eqref{eq:congruences2}, the integer $m$ is odd. In particular, $\gcd(m,t)=1$. Since $u\equiv - ks\pmod{m}$ and $k^2+t=lm$, we obtain $ku\equiv ts\pmod{m}$. Set $s^\prime:=u$ and $u^\prime:=-ts$. Then $u^\prime\equiv -ks^\prime\pmod{m}$ and hence there exists an integer $r'$ such that $u^\prime=mr^\prime-ks^\prime$. Consequently,
    \begin{equation}\label{eq:F_s_u_prime}
    	F^\prime(s^\prime,u^\prime)=t(s^\prime)^2+nms^\prime u^\prime+(u^\prime)^2=t(u^2- nmus+ts^2).
    \end{equation}
    On the other hand, \eqref{eq:normalization_F} yields $\abs{F^\prime(s^\prime,u^\prime)}=m^2\abs{F(r^\prime,s^\prime)}$. Since $\gcd(m,t)=1$, it follows from \eqref{eq:F_s_u_prime} that $t$ divides $F(r',s')$. We claim that either
    \begin{equation}\label{eq:case_r_s_prime}
    	\abs{F(r^\prime,s^\prime)}\geq \max\{t,c_4(s)\} \quad\text{and}\quad \min\{t,c_4(s)\}=1,
    \end{equation} 
    or 
    \begin{equation}\label{eq:case_r_s_prime2}
    	\abs{F(r^\prime,s^\prime)}\geq 2c_4(s).
    \end{equation}

    If $s$ is odd, then $c_4(s)=1$, and therefore \eqref{eq:case_r_s_prime} follows immediately. Assume now that $s$ is even. Then $u$ is also even, and since $m$ is odd, the relation $-ts=u^\prime=mr'-ks'=mr^\prime-ku$ shows that $r'$ is even. Hence, by \eqref{eq:inequality_c_4_u}, 
    \begin{equation*}
    	\abs{F(r^\prime,s^\prime)}\geq \max\{t,c_4(u)\}.
    \end{equation*} 
    Moreover, since $\gcd(r,s)=1$ and $s$ is even, we must have $r$ odd.
    
    If $s \equiv 0\pmod{4}$, then $u=2mr-ks\equiv 2\pmod{m}$ and therefore $c_4(u)=8=2c_4(s)$. Thus \eqref{eq:case_r_s_prime2} holds. Finally, assume $s\equiv 2\pmod{4}$. If $t=1$, then \eqref{eq:congruences} implies that $k$ is even. Consequently, $u\equiv 2mr\pmod{4} \equiv 2\pmod{4}$ and hence $c_4(u)=8=c_4(s)$. Therefore, \eqref{eq:case_r_s_prime} holds. If $t=2$, then $k$ is odd, and so $u\equiv  2mr-2\pmod{4} \equiv 0 \pmod{4}$. Consequently, $r^\prime=(ku-ts)/m$ is divisible by $4$. Therefore $\abs{F(r^\prime,s^\prime)}=16\abs{F(r^\prime/4,u/4)}\geq 16 = 2c_4(s)$, and hence \eqref{eq:case_r_s_prime2} follows.
    
    Set $c=c_4(s)$. Assume first that $F^\prime(s^\prime,u^\prime)\geq 0$. If \eqref{eq:case_r_s_prime} holds, then using \eqref{eq:F_s_u_prime} we obtain
    \begin{align*}
    	\frac{ts^2+nmsu+u^2}{c}-m^2&\geq \frac{2nmsu+m^2\max\{t,c\}/t}{c}-m^2  \\
    	&\geq \frac{2nmsu}{c}+m^2(1/\min\{t,c\}-1).
    \end{align*}
    Since $\min\{t,c\}=1$, we obtain 
    \begin{equation*} 
        \frac{ts^2+nmsu+u^2}{c}-m^2 \geq \frac{nmsu}{c}. 
    \end{equation*} 
    The same inequality clearly remains true when \eqref{eq:case_r_s_prime2} holds. Consequently, \begin{equation*} 
        \frac{\abs{F'(s,u)}}{c}-m^2 = \frac{ts^2+nmsu+u^2}{c}-m^2 \geq \frac{nmsu}{c} > \frac{mu}{s}, 
    \end{equation*} 
    since $n\geq 4$.

    Now suppose that $F'(s',u')<0$. If \eqref{eq:case_r_s_prime} holds, then $F(s^\prime,u^\prime)<-m^2\max\{t,c\}$. Using \eqref{eq:inequality_c_4_u} and \eqref{eq:F_s_u_prime}, we obtain 
    \begin{equation*}
    	tm^2c-2tnmus\leq tF^\prime(s,u) - 2tnmus = F^\prime(s^\prime,u^\prime) < -m^2\max\{t,c\}.
    \end{equation*}
    Hence $(tc+\max\{t,c\})m^2<2tnmus$. Therefore,
    \begin{align*}
    	\frac{ts^2+nmsu+u^2}{c}-m^2&\geq \frac{ts^2+(m^2/2)(tc+\max\{t,c\})/t+u^2}{c}-m^2  \\
    	&\geq\frac{ts^2+u^2}{c}+\left(\frac{m^2}{2}\right)\left(\frac{1}{\min\{t,c\}}-1\right). 
    \end{align*}
    
    Finally, if \eqref{eq:case_r_s_prime2} holds, the same argument yields 
    \begin{equation*} 
        \frac{\abs{F'(s,u)}}{c}-m^2 = \frac{ts^2+nmsu+u^2}{c}-m^2 \geq \frac{ts^2+u^2}{c}. 
    \end{equation*}

    Since $mc<nus$ in this case, the right-hand side of \eqref{eq:inequality_to_be_proven1} is bounded above by $nu^2/c$. Therefore, it suffices to prove that 
    \begin{equation*}
        \Delta s^2\geq (n-\Delta)u^2 = \frac{u^2}{\Delta m^2}.
    \end{equation*}
    Equivalently, it suffices to show that $\Delta ms > u$. But this is immediate, since $u=2mr-ks\leq 2mr\leq ms$, where the last inequality follows from $2r/s\in[0,1]$. As $\Delta> 1$, we conclude that $\Delta ms> ms\geq u$. This completes the proof. 
    \end{proof}

\begin{lemma}\label{lem:Theta}
    Let $F=g_\Lambda$ or $F=h_M$ be one of the forms defined in \eqref{eq:forms} and let $\Theta<0$ denote the negative root of $F$, that is, $F(\Theta,1)=0$. Then, for every $r,s\in\Z$ with $\gcd(r,s)=1$ and $s\neq 0$,
    \begin{equation*}
    	\Delta s^2\abs{\Theta - \frac{2r}{s}}\geq c_4(s),
    \end{equation*}
    with equality if and only if $(2r,s)=(k,m)$. 
\end{lemma}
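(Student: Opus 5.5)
The plan is to run the proof of \Cref{lem:theta} again, with the roles of $\theta$ and $\Theta$ exchanged. No integral change of variables seems to send $\Theta$ to $\theta$ while preserving the parity data behind $c_4(s)$. The identity \eqref{eq:identity_F} gives $x\mapsto x+(n-2k/m)y$, which is not integral. The substitution $x\mapsto -x-4y$ does preserve $c_4$, but it turns $F$ into a form of the same shape with $k$ replaced by $-k$, which falls outside the hypothesis $k\geq 1$. So I would argue directly, reusing the ingredients already established: the bound $\abs{F(2x,y)}\geq c_4(y)$ for coprime $x,y$ (from \Cref{lem:schmidt_59}), the normalization \eqref{eq:normalization_F}, and the involution-like map $(s,u)\mapsto(s',u')=(u,-ts)$ together with the dichotomy \eqref{eq:case_r_s_prime}/\eqref{eq:case_r_s_prime2}. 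That dichotomy depends only on $s$, $u$ and the congruences \eqref{eq:congruences}, \eqref{eq:congruences2}, not on which root is being approximated.

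\emph{Step 1 (far case).} By \Cref{lem:roots_of_F} with $n=4$ we have $\Theta\in(-4,-3)$. If $2r/s\notin[-4,-3]$, then $s^2\abs{\Theta-2r/s}$ is bounded below by roughly $\abs{s}$ times the distance to the interval. I would check that this already gives at least $c_4(s)/\Delta$, using $\Delta>3$ (since $\Delta=2+\sqrt{4-t/m^2}$) and $c_4(s)\leq 8$. The only delicate instances are small $\abs{s}$ with $2r/s$ just outside the interval, and these can be checked by hand. Otherwise we may assume $2r/s\in[-4,-3]$ and, replacing $(r,s)$ by $(-r,-s)$ if necessary, $s>0$.

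\emph{Step 2 (reduction).} Using $F(2r,s)=s^2(2r/s-\theta)(2r/s-\Theta)$, the claim becomes
\[
\frac{\Delta}{c}\abs{F(2r,s)}\geq \theta-\frac{2r}{s}=\frac{k}{m}-4+\Delta-\frac{2r}{s},\qquad c=c_4(s).
\]
With $u=2mr-ks$ (now $u<0$), multiplying by $m^2$ turns this into
\[
\Delta\Big(\frac{\abs{F'(s,u)}}{c}-m^2\Big)\geq -\frac{mu}{s}-4m^2 .
\]
This is the analogue of \eqref{eq:inequality_to_be_proven1}. Here $-mu/s\in[3m^2-mk/s,\,4m^2]$, so the right-hand side is at most $0$ up to a term of size $mk/s$. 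That makes this version noticeably easier than the $\theta$ case. Equality analysis goes as before. The value $u=0$ cannot occur here since $k/m>0>2r/s$. So I expect strict inequality throughout, and I would revisit the stated equality clause, whose evident analogue is attained at the root-conjugate point rather than at $(k,m)$.

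\emph{Step 3 (main estimate).} Apply the same map $s'=u$, $u'=-ts$ and the same case split on $F'(s',u')\geq0$ or $<0$, obtaining lower bounds of the form $\abs{F'(s,u)}/c-m^2\geq nm\abs{su}/c$ or $\geq (ts^2+u^2)/c$. In the first case we are done, since $nm\abs{su}\Delta/c$ dominates $m^2$-size quantities once $\abs{u}\geq m s$, which holds because $2r/s\leq -3$. The main obstacle is the second sign case: there one must check that $(ts^2+u^2)\Delta/c$ beats the residual term $mk/s$. Here $\abs{u}\approx 3.5\,ms$ gives ample room, but the inequalities from the $\theta$ proof have to be re-signed carefully.
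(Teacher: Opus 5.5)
Step 1 and the reduced inequality in Step 2 are correct. The latter is exactly the paper's, since $-\frac{mu}{s}-4m^2=\frac{mv}{s}$ with $v:=-u-4ms=ks-2mr-4ms$. (The one caveat in Step 1 is $h_1$: there $k=m$ and $\Theta=-1-\sqrt2\notin(-4,-3)$, a slip already present in \Cref{lem:roots_of_F}.) After that there is a genuine gap.

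Your range is wrong: $-\frac{mu}{s}=mk-m^2\frac{2r}{s}\in[3m^2+mk,\,4m^2+mk]$. Hence $\frac{v}{s}\in[k-m,\,k]$, and the right-hand side can be as large as $mk$, not $mk/s$. So this case is the mirror image of the $\theta$ case (with $k\leftrightarrow m-k$), not an easier one.

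More seriously, the lower bounds Step 3 is supposed to produce, $|F'(s,u)|/c-m^2\ge nm|su|/c$ or $\ge(ts^2+u^2)/c$, are false. Take $F=g_\Lambda$ and $(2r,s)=(k-4m,m)$. Then $u=-4m^2$, $F'(s,u)=m^2$ and $c=1$, so the left side is $0$ while the right sides are $16m^4$ and $m^2+16m^4$. The reason is that for $u<0$ the twist gives $F'(u,-ts)=tF'(s,-u)=t(ts^2+4ms|u|+u^2)>0$ always, while $F'(s,u)=F'(s,-u)-8ms|u|$. So the case analysis of the $\theta$ proof yields nothing. No re-signing fixes this: how well $2r/s$ approximates $\Theta$ is recorded in the small quantity $v$, not in $u\approx-3.5\,ms$.

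The missing idea, which is what the paper does, is to apply \eqref{eq:identity_F} at the level of $F'$: $F'(s,u)=F'(-s,-u)=F'(s,v)$. The target then becomes $\Delta\bigl(|F'(s,v)|/c-m^2\bigr)\ge mv/s$, literally of the form \eqref{eq:inequality_to_be_proven1}, and one may assume $v>0$. No twist $(s,u)\mapsto(u,-ts)$ and no dichotomy \eqref{eq:case_r_s_prime}/\eqref{eq:case_r_s_prime2} are needed. Instead compare with $F'(-s,v)=ts^2-4msv+v^2=m^2F(2r+4s,s)$; equivalently, use $F(2r,s)-F(2r+4s,s)=8sv/m$. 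Since $2r+4s=2(r+2s)$ and $\gcd(r+2s,s)=1$, you get $|F'(-s,v)|\ge m^2c$. So although \eqref{eq:identity_F} is not integral in $x$, combined with $(s,v)\mapsto(-s,v)$ it encodes the integral translation $x\mapsto x+4y$, which preserves $c_4$. The rest is two cases:
\begin{itemize}
\item If $F'(-s,v)\ge0$, then $F'(s,v)\ge m^2c+8msv$ and you are done.
\item If $F'(-s,v)<0$, then $mc\le4sv$, hence $F'(s,v)/c-m^2\ge(ts^2+v^2)/c$. Using $4-\Delta=t/(\Delta m^2)$, the claim reduces to $\Delta ms>v$, which holds since $v\le ks\le ms$.
\end{itemize}

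Finally, you are right that $(k,m)$ is not the equality case, but ``strict inequality throughout'' is also wrong. The value $v=0$ does occur, and equality holds there:
\begin{itemize}
\item for $g_\Lambda$ at $(2r,s)=(k-4m,m)$ (recall $g_\Lambda(l-4\Lambda,\Lambda)=1$);
\item for $h_M$ at $(2r,s)=(2k-8m,2m)$, where both sides equal $8=c_4(2m)$.
\end{itemize}
For example, for $g_1$ and $(r,s)=(-2,1)$ one gets $(2+\sqrt3)(2-\sqrt3)=1=c_4(1)$.
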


\begin{proof}
    The proof is analogous to that of \Cref{lem:theta}. As in the previous lemma, we may assume that $2r/s\in[-n,-n+1]$ and that $\abs{F(2r,s)}\geq c_4(s)$. Repeating the computations from the proof of \Cref{lem:theta}, we are reduced to proving
    \begin{equation*}
        \Delta\left(\frac{\abs{F^\prime(s,v)}}{c_4(s)}-m^2\right)\geq\frac{mv}{s},
    \end{equation*}
    where 
    \begin{equation*}
        v:=ks-2mr-nms=-u-nms
    \end{equation*}
    Moreover, using \eqref{eq:identity_F}, we obtain
    \begin{equation*}
        F^\prime(s,u)=F^\prime(-s,-u)=F^\prime(-s,v+nms)=F^\prime(s,v).
    \end{equation*}
    Hence the desired inequality is
    \begin{equation}\label{eq:inequality_to_be_proven2}
        \Delta\left(\frac{\abs{F^\prime(s,v)}}{c_4(s)}-m^2\right)\geq\frac{mv}{s}.
    \end{equation}
    Set $c:=c_4(s)$. Proceeding exactly as in the proof of \Cref{lem:theta}, we may assume that $v>0$ and $F^\prime(s,v)\geq m^2c$. Then
    \begin{equation*}
        \frac{F^\prime(s,v)}{c}-m^2=\frac{ts^2+nmsv+v^2}{c}-m^2.
    \end{equation*}
    
    Define the even integer $r^\prime:=-2r-ns$. Then $v=mr^\prime+ks$, and \eqref{eq:normalization_F} yields $\abs{F^\prime(-s,v)}=m^2\abs{F(r^\prime,-s)}\geq m^2c$ and
    \begin{equation*}
        F^\prime(-s,v)=ts^2-nmvs+v^2.
    \end{equation*}
    Suppose first that $F^\prime(-s,v)\geq 0$. Then $F^\prime(-s,v)\geq m^2c$, and therefore
    \begin{equation*}
        \frac{ts^2+nmsv+v^2}{c}-m^2\geq \frac{2nmsv+m^2c}{c}-m^2=\frac{2nmsv}{c} > \frac{mv}{s}.
    \end{equation*}
    Now assume that $F^\prime(-s,v)<0$. Then $F^\prime(-s,v)<-m^2c$. Since $v=-u-nms$, the identity \eqref{eq:identity_F} gives $F^\prime(s,v)=F^\prime(-s,v+nms)=F^\prime(-s,-u)=F^\prime(s,u)$. Moreover, since $u = 2mr-ks$ and $2r$ is even, \eqref{eq:normalization_F} implies that $\abs{F^\prime(s,u)}=m^2\abs{F(2r,s)}\geq m^2c$. Since $s>0$ and $v>0$, we obtain
    \begin{equation*}
        m^2c-2nmvs\leq F^\prime(s,v) - 2nmvs = F^\prime(-s,v) < -m^2c,
    \end{equation*}
    and hence $mc<nvs$. Therefore,
    \begin{equation*}
        \frac{ts^2+nmsv+v^2}{c}-m^2\geq\frac{ts^2+v^2}{c}.
    \end{equation*}

    As in the proof of \Cref{lem:theta}, the strict inequality in \eqref{eq:inequality_to_be_proven2} is equivalent to $\Delta ms>v$. Since $k\leq m$ and $2r/s\geq -n$, we have $v=ks-2mr-nms\leq ms$, and the conclusion follows because $\Delta>1$. 
\end{proof}

\begin{theorem}\label{thm:roots_inequality_theorem}
Let $F=g_\Lambda$ or $F=h_M$ be any of the forms defined on \eqref{eq:forms}. Let $\eta$ be a root of $F$, that is, $F(\eta,1)=0$. Then, for all $p,q\in\Z[i]$ with $q\neq 0$, we have
\begin{equation*}
    \abs{\frac{1+i\eta}{2} - \frac{p}{q}}>\frac{1}{2\abs{q}^2}.
\end{equation*}
\end{theorem}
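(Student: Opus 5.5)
Fix a root $\eta$ of $F$ and suppose, for contradiction, that some $p,q\in\Z[i]$, $q\neq 0$, satisfy
\begin{equation*}
    \abs{\frac{1+i\eta}{2}-\frac{p}{q}}\leq\frac{1}{2\abs{q}^2}.
\end{equation*}
The plan is to reduce to the real inequalities of \Cref{lem:theta} and \Cref{lem:Theta}. Write $\frac{1+i\eta}{2}=\frac12+i\alpha$ with $\alpha=\eta/2\in\R\setminus\Q$. By \Cref{lem:fundamental_lemma}, the assumed inequality forces $\Re(p/q)=\tfrac12$. Hence, after reducing the fraction, we may write $p/q=\tfrac12+i\frac{r}{s}$ with $\gcd(r,s)=1$ and $s\neq0$. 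Reducing $p/q$ can only decrease $\abs{q}$, so it suffices to treat the reduced representative. That representative is given by \eqref{eq:denominator_cases}, and by \eqref{eq:c_4(s)} it satisfies $\abs{q}^2=4s^2/c_4(s)$.

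Next, rewrite the quantity to be bounded. We have
\begin{equation*}
    \abs{\frac{1+i\eta}{2}-\frac{p}{q}}=\abs{\frac{\eta}{2}-\frac{r}{s}}=\frac12\abs{\eta-\frac{2r}{s}}.
\end{equation*}
Therefore the desired strict inequality is equivalent to
\begin{equation*}
    s^2\abs{\eta-\frac{2r}{s}}>\frac{c_4(s)}{4}.
\end{equation*}
When $\eta=\theta$ this follows from \Cref{lem:theta}, and when $\eta=\Theta$ from \Cref{lem:Theta}. Both lemmas give $s^2\abs{\eta-2r/s}\geq c_4(s)/\Delta$. Since $n=4$ and $t\in\{1,2\}$, we have $\Delta=2+\sqrt{4-t/m^2}$, so $\Delta\ge 2+\sqrt3>4$. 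Hence $c_4(s)/\Delta<c_4(s)/4$. This comparison goes the wrong way, and I must recheck it: we need a lower bound of at least $c_4(s)/4$. So the bound $c_4(s)/\Delta$ alone does not suffice, and I must recheck the normalization.

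The right normalization comes from the fact that the form $F$ is monic in $x$. For a root $\eta$ of $F$ we have
\begin{equation*}
    \abs{F(2r,s)}=s^2\abs{\eta-\tfrac{2r}{s}}\abs{\eta'-\tfrac{2r}{s}},
\end{equation*}
where $\eta'$ is the other root. The two lemmas are exactly of the form "distance to $\eta$ times something $\ge c_4(s)$". I therefore expect that in the paper's conventions the lemmas already encode the constant $\tfrac14$ through the parameter $m$. Concretely, $\Delta$ should be compared with $\abs{\eta'-2r/s}$, which is at most about $n=4$ when $2r/s$ lies in the relevant interval.

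The main obstacle is precisely this constant bookkeeping. I would first redo the computation with the actual scale: the approximation $|\alpha - r/s|$ with $\alpha=\eta/2$ versus $|\eta-2r/s|$. If a factor is lost, I would instead apply the lemmas to the form $F(x,2y)$ or $F(2x,y)$ appropriately, matching \Cref{lem:schmidt_59}'s $\Z_0^2/\Z_1^2$ dichotomy, which is where $c_4$ arises.

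The equality case of the lemmas, $(2r,s)=(k,m)$, must also be excluded. In that case $s=m$ is odd, so $c_4(s)=1$, and the strict inequality must be checked directly. This uses that $k$ is even, so that $r=k/2$ is an integer, and compares with the exact value $m^2\abs{\eta-k/m}=m^2\cdot\abs{n-\Delta}$ or $m^2\Delta$. Since $n-\Delta=t/(m^2\Delta)$, this gives $m^2\abs{\theta-k/m}=t/\Delta$, and for $\Theta$ a much larger quantity. So for $\theta$ the needed check reduces to $t/\Delta$ versus $1/4$, with $t\le 2$. I expect this to again force the constants to be read as $\Delta s^2\abs{\eta-2r/s}\ge c_4(s)$ meaning the required bound up to the factor hidden in the definition. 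I would settle it by a direct numeric check for the smallest cases, namely $\Lambda=1$ and $M=1$, and then use monotonicity in $m$.
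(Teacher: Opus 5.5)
Your setup is exactly the paper's: \Cref{lem:fundamental_lemma} forces $\Re(p/q)=\tfrac12$, the reduced denominator satisfies $\abs{q}^2=4s^2/c_4(s)$, and the claim becomes $s^2\abs{\eta-2r/s}>c_4(s)/4$. This is then read off from \Cref{lem:theta} and \Cref{lem:Theta}. However, the proposal as written does not close, and the reason is an arithmetic slip at the decisive step.

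The slip is that $2+\sqrt3\approx 3.73$ is \emph{less} than $4$, not greater. More generally, with $n=4$ you have $\Delta=2+\sqrt{4-t/m^2}$, so $2+\sqrt2\le\Delta<4$ for every $g_\Lambda$ and $h_M$, because $t/m^2>0$. Hence $c_4(s)/\Delta>c_4(s)/4$. The lemmas' bound $s^2\abs{\eta-2r/s}\ge c_4(s)/\Delta$ therefore gives precisely the strict inequality you need. Equivalently, as the paper puts it: combining $\abs{\eta-2r/s}\le c_4(s)/(4s^2)$ with $\abs{\eta-2r/s}\ge c_4(s)/(\Delta s^2)$ would force $\Delta\ge4$, which is impossible.

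Everything after your sentence saying the comparison ``goes the wrong way'' should be discarded. You need no renormalization via $F(x,2y)$ or $F(2x,y)$, no numerical checks, and no monotonicity in $m$. The equality case $(2r,s)=(k,m)$ of the lemmas also needs no separate treatment. The strictness in the theorem comes from $\Delta<4$, not from strictness in the lemmas. Your own computation of that case, $m^2\abs{\theta-k/m}=t/\Delta\ge 1/\Delta>1/4$, already confirms this.

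As submitted, though, you explicitly assert that the key comparison fails and leave the argument unresolved. So it is not yet a proof, even though the correct one is a one-line fix of your own reduction.
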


\begin{proof}
    Assume, for contradiction, that there exist $p,q\in\Z[i]$ with $q\neq 0$ and $\gcd(p,q)=1$ such that 
    \begin{equation}\label{eq:contradiction_inequality}
        \abs{\frac{1+i\eta}{2} - \frac{p}{q}}\leq\frac{1}{2\abs{q}^2}
    \end{equation}
    By \Cref{lem:fundamental_lemma}, we have $\Re(p/q)=1/2$, and hence $\frac{p}{q}=\frac{1}{2}+i\frac{r}{s}$ for some $r/s\in\Q$ with $\gcd(r,s)=1$. Therefore, using \eqref{eq:c_4(s)}, the inequality \eqref{eq:contradiction_inequality} becomes
    \begin{equation*}
        \abs{\eta - \frac{2r}{s}}\leq\frac{1}{\abs{q}^2}=\frac{c_4(s)}{4s^2}.
    \end{equation*}
    On the other hand, \Cref{lem:theta,lem:Theta} yields
    \begin{equation*}
        \abs{\eta - \frac{2r}{s}}\geq\frac{c_4(s)}{\Delta s^2} 
    \end{equation*}
    where
    \begin{equation*}
        \Delta = 2+\sqrt{4-tm^{-2}}.
	\end{equation*}
	Combining the two inequalities, we obtain $\Delta\geq 4$, which is impossible since  $\Delta=2+\sqrt{4-tm^{-2}}<4$. This contradiction shows that \eqref{eq:contradiction_inequality} has no solutions.
\end{proof}

Now we will focus in building uncountably many complex numbers with real part $1/2$ satisfying \eqref{eq:1/2|q|^2}. These numbers will be obtained by taking limits of roots of Schmidt's $C$--minimal forms along the tree of solutions of Vulakh's equation. The following lemmas were proved by Schmidt in \cite[Section 5]{Sch}, imitating the analogous development done by Cassels in \cite[Chapter II]{Cassels}. For example, similar to the proof of \cite[Chapter II, Lemma 12]{Cassels}, we have the following \cite[Lemma 5.11]{Sch}.

\begin{lemma}\label{lem:Schmidt511}
    Let $f(x,y)=x^2+Bxy+Cy^2$, $B,C\in\R$. Then
    \begin{enumerate}
        \item If $f(l_2,\Lambda_2)\leq -1$ and $f(l_2-4\Lambda_2,\Lambda_2)\leq -1$, then $\Disc(f)\geq 16+4/\Lambda_2^2$.
        \item If $f(m_2,M_2)\leq -2$ and $f(m_2-4M_2,M_2)\leq -2$, then $\Disc(f)\geq 16+4/M_2^2$.
    \end{enumerate}
\end{lemma}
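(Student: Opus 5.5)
Write $f(x,y)=(x-\eta y)(x-\eta' y)$ with $\eta>\eta'$, so that $\Disc(f)=(\eta-\eta')^2=B^2-4C$. The plan follows Cassels' Lemma 12: parametrize the point $(x,y)=(l_2,\Lambda_2)$ and its translate $(l_2-4\Lambda_2,\Lambda_2)$ in the normalized coordinates used for $g'_\Lambda$, and reduce the two hypotheses to a single quadratic inequality in one real parameter. We treat part (1); part (2) is identical with $(l,\Lambda,1)$ replaced by $(m,M,2)$.

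\textbf{Step 1: normalize.} Write $\Lambda=\Lambda_2$ and $l=l_2$, and put $\xi:=\eta\Lambda-l$, $\xi':=\eta'\Lambda-l$. Then
\begin{equation*}
f(l,\Lambda)=\Lambda^2\left(\frac{l}{\Lambda}-\eta\right)\left(\frac{l}{\Lambda}-\eta'\right)=\xi\xi' .
\end{equation*}
For the translate, $l-4\Lambda-\eta\Lambda=-(\xi+4\Lambda)$, so
\begin{equation*}
f(l-4\Lambda,\Lambda)=(\xi+4\Lambda)(\xi'+4\Lambda).
\end{equation*}
Both products are $\le -1$. From the first, $\xi$ and $\xi'$ have opposite signs, so (since $\eta>\eta'$) we have $\xi>0>\xi'$. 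From the second, $\xi+4\Lambda>0$ forces $\xi'+4\Lambda<0$. Set $a:=\xi>0$ and $b:=-\xi'-4\Lambda>0$. The hypotheses then become
\begin{equation*}
a(4\Lambda+b)\ge 1,\qquad b(4\Lambda+a)\ge 1,
\end{equation*}
and $\Lambda(\eta-\eta')=\xi-\xi'=a+b+4\Lambda$.

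\textbf{Step 2: minimize.} We need $(a+b+4\Lambda)^2\ge \Lambda^2(16+4/\Lambda^2)=16\Lambda^2+4$, which is equivalent to
\begin{equation*}
(a+b)^2+8\Lambda(a+b)\ge 4.
\end{equation*}
Adding the two hypotheses gives $4\Lambda(a+b)+2ab\ge 2$. Using $2ab\le (a+b)^2/2$, we obtain $4\Lambda S+S^2/2\ge 2$ with $S=a+b$. Multiplying by $2$ yields $S^2+8\Lambda S\ge 4$, which is exactly the required inequality.

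\textbf{Step 3: conclude.} This gives $\Disc(f)\ge 16+4/\Lambda_2^2$. For part (2) the same computation, with right-hand side $2$ in the hypotheses, gives $S^2+8MS\ge 8$, i.e. $\Disc\ge 16+8/M^2$. This is even stronger than the stated $16+4/M_2^2$, so part (2) follows as well.

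The only delicate point is the sign analysis in Step 1, which determines which root is paired with which factor. We must use $\eta>\eta'$ real: if the roots were non-real, then $f(l,\Lambda)>0$ would contradict the hypothesis, so $\Disc(f)>0$ automatically and the roots are indeed real and distinct. Once the signs are fixed, the rest is an AM–GM computation.
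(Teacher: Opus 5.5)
Your argument is correct and follows a different route from the one the paper relies on. The paper gives no proof of this lemma; it cites Schmidt's Lemma 5.11, which is modeled on Cassels' Chapter II, Lemma 12, and the same method is written out in the proof of \Cref{lem:new_Schmidt511}. That method subtracts the extremal form $g_{\Lambda_2}$ (resp.\ $h_{M_2}$). By \eqref{eq:special_values_of_g_and_h} this form equals $1$ (resp.\ $2$) at both test points, so $f-g_{\Lambda_2}\le -2$ there. Writing both forms as completed squares, one picks the test point $(l_2,\Lambda_2)$ or $(l_2-4\Lambda_2,\Lambda_2)$ according to whether $B\ge\tilde B$ or $B\le\tilde B$, which makes the difference-of-squares term nonnegative. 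This yields $\Disc(f)\ge\Disc(g_{\Lambda_2})+8/\Lambda_2^2=16+4/\Lambda_2^2$, and in part (2) $\Disc(f)\ge 16-8/M_2^2+16/M_2^2=16+8/M_2^2$, the same bound you obtain. Your factorization through the roots, followed by adding the two inequalities and applying AM--GM, avoids the extremal form altogether. It uses neither $l_2^2+1=\lambda_2\Lambda_2$ nor $\Disc(g_\Lambda)=16-4/\Lambda^2$. It shows the bound holds for any real $l$ and any $\Lambda>0$, and it is sharp, with equality exactly when $a=b$. What the comparison method buys is that it is phrased relative to the minimal forms. That is why the paper reuses it in the mixed configuration of \Cref{lem:new_Schmidt511}, where the two test points have different denominators $\Lambda_2$ and $M_2$ and the sign of $\varepsilon$ enters through \eqref{eq:determinant_equation2}. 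One cosmetic point: when ruling out $\Disc(f)\le 0$, also cover the double-root case $\Disc(f)=0$. There $f=(x+\tfrac12 By)^2\ge 0$, which contradicts the hypothesis just as the non-real case does.
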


\begin{lemma}\label{lem:new_Schmidt511}
    Let $f(x,y)=x^2+Bxy+Cy^2$, $B,C\in\R$. Then
    \begin{enumerate}
        \item If $f(l_2,\Lambda_2)\leq -1$ and $f(m_2-4M_2,M_2)\leq -2$ with $\varepsilon=+1$, then $\Disc(f)\geq 16+4/\Lambda_2^2$.
        \item If $f(m_2,M_2)\leq -2$ and $f(l_2-4\Lambda_2,\Lambda_2)\leq -1$ with $\varepsilon=-1$, then $\Disc(f)\geq 16+4/M_2^2$.
    \end{enumerate}
\end{lemma}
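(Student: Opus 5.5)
The plan is to follow the proof of \cite[Chapter II, Lemma 12]{Cassels}, which Schmidt adapted for \Cref{lem:Schmidt511}. The two negativity conditions say that two explicit rationals both lie inside an interval determined by $B$ and $\Disc(f)$. The identity \eqref{eq:determinant_equation2} says those rationals are more than $4$ apart. A small discriminant would make the interval too short, giving a contradiction.

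\emph{Setup.} Write $D=\Disc(f)=B^2-4C$ and complete the square:
\begin{equation*}
    f(x,y)=\left(x+\tfrac{B}{2}y\right)^2-\tfrac{D}{4}y^2 .
\end{equation*}
A condition $f(a,b)\leq -c$ with $b\neq 0$ is equivalent to
\begin{equation*}
    \left(\tfrac{a}{b}+\tfrac{B}{2}\right)^2\leq \tfrac{D}{4}-\tfrac{c}{b^2}.
\end{equation*}
In particular, both conditions force $D>0$.

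\emph{Case (1), $\varepsilon=+1$.} Put $\alpha=l_2/\Lambda_2$ and $\beta=m_2/M_2-4$. The hypotheses give
\begin{equation*}
    \abs{\alpha+\tfrac B2}\leq\sqrt{\tfrac D4-\tfrac1{\Lambda_2^2}}
    \quad\text{and}\quad
    \abs{\beta+\tfrac B2}\leq\sqrt{\tfrac D4-\tfrac2{M_2^2}}.
\end{equation*}
By the triangle inequality,
\begin{equation*}
    \alpha-\beta\leq \sqrt{\tfrac D4-\tfrac1{\Lambda_2^2}}+\sqrt{\tfrac D4-\tfrac2{M_2^2}}.
\end{equation*}
By \eqref{eq:determinant_equation2},
\begin{equation*}
    \alpha-\beta=4-\frac{\Lambda_2m_2-M_2l_2}{\Lambda_2M_2}=4+\frac{\varepsilon M_1}{\Lambda_2M_2},
\end{equation*}
and with $\varepsilon=+1$ this is strictly greater than $4$. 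Now suppose $D<16+4/\Lambda_2^2$. The first root is then $<2$. The second root is
\begin{equation*}
    <\sqrt{4+\tfrac1{\Lambda_2^2}-\tfrac2{M_2^2}}<2,
\end{equation*}
where the last step uses $M_2<\Lambda_2$ from \eqref{eq:M_2>Lambda_1}; I take this to hold for the nonsingular solutions in play, in particular $M_2\leq\Lambda_2$. So the sum of the two roots is less than $4<\alpha-\beta$, a contradiction.

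\emph{Case (2), $\varepsilon=-1$.} Put $\alpha=m_2/M_2$ and $\beta=l_2/\Lambda_2-4$, so again
\begin{equation*}
    \alpha-\beta=4+\frac{m_2\Lambda_2-l_2M_2}{M_2\Lambda_2}=4-\frac{\varepsilon M_1}{M_2\Lambda_2}>4.
\end{equation*}
Suppose $D<16+4/M_2^2$. The two roots are then bounded by
\begin{equation*}
    \sqrt{4-\tfrac1{M_2^2}}
    \quad\text{and}\quad
    \sqrt{4+\tfrac1{M_2^2}-\tfrac1{\Lambda_2^2}},
\end{equation*}
and the second may exceed $2$. Concavity of the square root gives
\begin{equation*}
    \sqrt{u}+\sqrt{v}\leq 2\sqrt{\tfrac{u+v}{2}},
\end{equation*}
so their sum is at most
\begin{equation*}
    2\sqrt{4-\tfrac{1}{2\Lambda_2^2}}<4<\alpha-\beta,
\end{equation*}
again a contradiction.

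The main point to get right is the sign bookkeeping in \eqref{eq:determinant_equation2}. The $\varepsilon$-hypotheses are exactly what make the gap $\alpha-\beta$ exceed $4$ rather than fall short of it, so I would double-check that $\Lambda_2m_2-M_2l_2=-\varepsilon M_1$ is used with the correct orientation in each case. I would also check the inequality $M_2<\Lambda_2$ used in case (1). The remaining steps are elementary estimates.
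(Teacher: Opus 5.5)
Your proof is correct, and it takes a different route from the paper's.

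\textbf{The paper's route.} It compares $f$ with the extremal forms $g_{\Lambda_2}$ and $h_{M_2}$. It subtracts completed squares and uses the special values \eqref{eq:special_values_of_g_and_h} together with the discriminants $16-4/\Lambda_2^2$ and $16-8/M_2^2$. It then splits into three cases: $B\geq 4-2l_2/\Lambda_2$, $B\leq 4-2m_2/M_2$, or $B$ strictly in between. The identity \eqref{eq:determinant_equation2} is used only to show that the middle range is empty when $\varepsilon=1$.

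\textbf{Your route.} You read the two hypotheses as saying that $-B/2$ lies in two intervals centred at $\alpha$ and $\beta$. The sum of the two radii must then be at least $\alpha-\beta$, and \eqref{eq:determinant_equation2} makes this exactly $4+M_1/(\Lambda_2M_2)>4$. This needs neither $g_{\Lambda_2}$, $h_{M_2}$ nor their special values, and no case split on $B$. The inequality you flagged holds: \eqref{eq:M_2>Lambda_1} gives $\Lambda_2\geq M_1M_2\geq M_2$. The paper uses the same fact to conclude $16+8/M_2^2>16+4/\Lambda_2^2$.

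\textbf{Advantage in part (2).} Your concavity step delivers the stated constant $16+4/M_2^2$ directly. The paper only says the second statement is ``entirely analogous'', but copying its case split verbatim gives only $16+4/\Lambda_2^2$ in the subcase compared against $g_{\Lambda_2}$ at $(l_2-4\Lambda_2,\Lambda_2)$, since $\Lambda_2>M_2$. Recovering $16+4/M_2^2$ there requires exploiting the gap $2M_1/(M_2\Lambda_2)$ between the two critical values of $B$. The weaker bound does suffice for the later application, where only $\Disc(f)>16$ is used.

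\textbf{What the paper's route buys.} It keeps the proof uniform with \Cref{lem:Schmidt511}, following the template of Cassels' Chapter II, Lemma 12. It also shows explicitly which extremal form controls each range of $B$.
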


\begin{proof}[Proof of \Cref{lem:new_Schmidt511}]
    We prove only the first statement, since the second is entirely analogous. Write
    \begin{align*}
        f(x,y)&=\left(x+\frac{1}{2}By\right)^2-\frac{1}{4}\Disc(f)y^2, \\
        F(x,y)&=\left(x+\frac{1}{2}\tilde{B} y\right)^2-\frac{1}{4}\Disc(F)y^2,
    \end{align*}
    where $F(x,y)=x^2+\tilde{B}xy+\tilde{C}y^2$ denotes either $F=g_{\Lambda_2}$ or $F=h_{M_2}$. We shall prove that $\Disc(f)-\Disc(g_{\Lambda_2})-8/\Lambda_2^2\geq 0$. Since
    \begin{equation*}
        \frac{f(x,y) - F(x,y)}{y^2} = \frac{\Disc(F)-\Disc(f)}{4} + \left(\frac{x}{y}+\frac{B}{2}\right)^2 - \left(\frac{x}{y}+\frac{\tilde{B}}{2}\right)^2,
    \end{equation*}
    it suffices to estimate the difference of squares.
    
    Suppose first that $F=g_{\Lambda_2}$. Taking $(x,y)=(l_2,\Lambda_2)$ and using \eqref{eq:special_values_of_g_and_h}, we have $f(x,y)\le F(x,y)-2$, and hence
    \begin{equation*}
        \Disc(f)-\Disc(g_{\Lambda_2})-\frac{8}{\Lambda_2^2}\geq 4\left(\frac{x}{y}+\frac{B}{2}\right)^2 - 4\left(\frac{x}{y}+\frac{\tilde{B}}{2}\right)^2.
    \end{equation*}
    where $\tilde B=4-2l_2/\Lambda_2$. Since $0\le l_2\le\Lambda_2$, we have $0\leq x/y\le1$. Therefore, if $B\ge\tilde B$, the right-hand side is non-negative, giving the desired conclusion.
    
    Next suppose that $F=h_{M_2}$. Taking $(x,y)=(m_2-4M_2,M_2)$, we obtain 
    \begin{equation*}
        \Disc(f)-\Disc(h_{M_2})-\frac{16}{M_2^2}\geq 4\left(\frac{x}{y}+\frac{B}{2}\right)^2 - 4\left(\frac{x}{y}+\frac{\tilde{B}}{2}\right)^2.
    \end{equation*}
    where $\tilde{B}=4-2m_2/M_2$. Since $0\leq m_2\leq M_2$, we get
    \begin{equation*}
        -\frac{x}{y} = \frac{4M_2-m_2}{M_2} > 3 > 2-\frac{m_2}{M_2} = \frac{\tilde{B}}{2}.
    \end{equation*}
    Therefore, if $B\leq\tilde{B}$, we would obtain that
    \begin{equation*}
        \Disc(f)\geq\Disc(h_{M_2})+\frac{16}{M_2^2}=16+\frac{8}{M_2^2}>16+\frac{4}{\Lambda_2^2},
    \end{equation*}
    as required. It remains to consider the case
    \begin{equation*}
        4-\frac{2m_2}{M_2}<B<4-\frac{2l_2}{\Lambda_2}.
    \end{equation*}
    However, this inequality implies
    \begin{equation*}
        0<\frac{m_2}{M_2}-\frac{l_2}{\Lambda_2}=\frac{\Lambda_2m_2-M_2l_2}{M_2\Lambda_2}=\frac{-\varepsilon M_1}{M_2\Lambda_2},
    \end{equation*}
    where the last equality follows from \eqref{eq:determinant_equation2}. Since $\varepsilon=1$, the right-hand side is negative, a contradiction.
\end{proof}

Let $f(x,y)$ be a binary quadratic form. We use Cassels notation, by writing
\begin{equation*}
    P(x,y): \text{ if } f(x,y)>0, \qquad N(x,y): \text{ if } f(x,y)<0.
\end{equation*}

\begin{lemma}
    Let $f(x,y)=x^2+Bxy+Cy^2$, $2\leq B\leq 4$, $0<\Disc(f)<16$. Suppose that 
    \begin{align*}
        \abs{f(x,y)}\geq 2 \text{ for all } (x,y)\in\Z_0^2, \\
        \abs{f(x,y)}\geq 1 \text{ for all } (x,y)\in\Z_1^2.
    \end{align*}
    Then $f$ is either a $g_\Lambda$ or an $h_M$.
\end{lemma}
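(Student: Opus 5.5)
The plan is to imitate Cassels' proof of the Markov theorem for forms (Cassels, Chapter II, Lemmas 11--13), with the tree of solutions of Vulakh's equation \eqref{eq:Vulakh} playing the role of the Markov tree, and Lemmas \ref{lem:Schmidt511} and \ref{lem:new_Schmidt511} playing the role of Cassels' discriminant lemma. We descend the tree and show that, at every stage, either $f$ coincides with the form attached to the current solution, or $f$ is forced to have the sign pattern that lets the descent continue. Since the discriminant bound \emph{increases} as we go deeper (terms like $16+4/\Lambda_2^2$ decrease toward $16$ but the constraint is reapplied with ever larger $\Lambda_2$), the descent cannot go on forever unless $\Disc(f)\ge 16$, which is excluded.

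\textbf{Base step.} Normalize $f$ so that it has the stated shape with $2\le B\le 4$. Evaluate $f$ at the small vectors attached to the root solution $(1,1;1,1)$ and its unique neighbour $(1,5;1,3)$: namely $(0,1)$, $(1,1)$, $(-1,1)$, $(-3,1)$ and $(-4,1)$, using the parity classes $\Z_0^2,\Z_1^2$. Since $\Disc(f)<16$ and $f$ is indefinite with $B\in[2,4]$, these values have definite signs; for instance $N(-1,1)$ and $N(-3,1)$ should follow from $\Disc<16$ together with the lower bounds $\ge 1$ and $\ge 2$. The lower bounds $\abs{f}\ge 1$ on $\Z_1^2$ and $\abs{f}\ge 2$ on $\Z_0^2$ then turn these signs into the inequalities $f\le -1$ or $f\le -2$ at the relevant pairs $(l_2,\Lambda_2)$, $(l_2-4\Lambda_2,\Lambda_2)$, $(m_2,M_2)$, $(m_2-4M_2,M_2)$. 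This places $f$ into the hypothesis shape of Lemma \ref{lem:Schmidt511} or Lemma \ref{lem:new_Schmidt511}, unless $f$ already equals $g_1$ or $h_1$.

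\textbf{Inductive step.} Suppose that for a solution $(\Lambda_1,\Lambda_2;M_1,M_2)$ with quintuple $(\varepsilon,l_1,l_2;m_1,m_2)$ we know the signs $N$ at the vectors attached to $\Lambda_1,M_1$. We split into three cases.
\begin{enumerate}
\item Both vectors attached to $\Lambda_2$ (respectively $M_2$) are $N$. Then Lemma \ref{lem:Schmidt511} gives $\Disc(f)\ge 16+4/\Lambda_2^2$.
\item The mixed pattern occurs with the matching sign of $\varepsilon$. Then Lemma \ref{lem:new_Schmidt511} gives the analogous discriminant bound.
\item Exactly one of the two vectors is $P$. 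Then, using the recursion \eqref{eq:neighbors_solutions}, the determinant identities \eqref{eq:determinant_equation} and \eqref{eq:determinant_equation2}, and the unimodular change of variables relating consecutive vectors, the sign pattern transfers to one of the neighbours $(\Lambda_i,\Lambda_{ij};M_j,M_{ij})$.
\end{enumerate}
If the descent never stops, the denominators grow, the discriminant is forced to be $\ge 16$ in the limit, and we reach a contradiction. If it stops, the stopping condition is that the values of $f$ at $(l_2,\Lambda_2)$ and $(l_2-4\Lambda_2,\Lambda_2)$ equal exactly $1$ (respectively $2$ at the $M$-vectors). Comparing with \eqref{eq:special_values_of_g_and_h}, we get two linear equations in $B,C$ satisfied by both $f$ and $g_{\Lambda_2}$ (respectively $h_{M_2}$). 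Since the two evaluation vectors are independent, this forces $f=g_{\Lambda_2}$ (respectively $f=h_{M_2}$).

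\textbf{Main obstacle.} I expect the hardest part to be the sign-transfer step in case~3. This requires checking carefully that the vectors attached to the chosen neighbour lie in the correct parity class $\Z_0^2$ or $\Z_1^2$, so that the correct lower bound ($1$ or $2$) applies. It also requires tracking how $\varepsilon$ flips along the branch $(2,2)$, since this decides whether Lemma \ref{lem:new_Schmidt511} applies in its first or its second form. I would first verify the transition explicitly from $(1,5;1,3)$ to its three children, using the congruences \eqref{eq:congruences} and \eqref{eq:congruences2}, and then extract the general pattern by induction.
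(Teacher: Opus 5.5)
The paper does not reprove this lemma; it defers to Schmidt's Lemma 5.12, which follows Cassels. Your skeleton is the right one: descend the Vulakh tree, and use \Cref{lem:Schmidt511} and \Cref{lem:new_Schmidt511} to exclude the double-$N$ patterns. But the two steps that carry the proof are missing, and your termination mechanism does not work.

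\textbf{Termination.} The bounds $\Disc(f)\ge 16+4/\Lambda_2^2$ exist only when the hypotheses of those lemmas hold, i.e.\ exactly when the descent ends in a contradiction. On a branch that continues they are never available, so nothing is ``reapplied with ever larger $\Lambda_2$''. What rules out an infinite path is a lower bound at \emph{every} node, coming from the inherited signs.
\begin{itemize}
\item For $\varepsilon=+1$ you know $f\le-1$ at $(l_1,\Lambda_1)$ and $f\le-2$ at $(m_1-4M_1,M_1)$.
\item From \eqref{eq:determinant_equation}, \eqref{eq:determinant_equation2} and \eqref{eq:Vulakh} one gets $\Lambda_1m_2-M_2l_1=\varepsilon M_1$. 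Hence $h_{M_2}(l_1,\Lambda_1)=-1$ and $h_{M_2}(m_1-4M_1,M_1)=-2$. For $\varepsilon=-1$ one gets $h_{M_2}(l_1-4\Lambda_1,\Lambda_1)=-1$ and $h_{M_2}(m_1,M_1)=-2$.
\item Write $h_{M_2}=x^2+\tilde Bxy+\tilde Cy^2$ and $\Phi(u)=(f-h_{M_2})(u,1)=(B-\tilde B)u+(C-\tilde C)$. Then $\Phi\le0$ at a point of $[0,1]$ and at a point of $[-4,-3]$.
\item Hence $\Phi\le 0$ at $u^*=-(B+\tilde B)/4\in[-2,-1]$; this is where $2\le B\le4$ enters. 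Since $\Disc(f)-\Disc(h_{M_2})=-4\Phi(u^*)\ge0$, you get $\Disc(f)\ge 16-8/M_2^2$ at each node, and $M_2\to\infty$ along any infinite path.
\end{itemize}

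\textbf{Stopping.} The same comparison is needed here. Your claim that $f$ takes exactly the value $1$ (resp.\ $2$) is unsupported: the stopping pattern only gives $f\ge1$ at both $\Lambda_2$-vectors, or $f\ge2$ at both $M_2$-vectors.
\begin{itemize}
\item For $\varepsilon=+1$, the identity $\Lambda_2m_1-M_1l_2=\varepsilon M_2$ gives $g_{\Lambda_2}(m_1-4M_1,M_1)=-2\ge f(m_1-4M_1,M_1)$.
\item So the affine function $(f-g_{\Lambda_2})(u,1)$ is $\ge0$ at $l_2/\Lambda_2-4$ and at $l_2/\Lambda_2$, but $\le0$ at the interior point $m_1/M_1-4$. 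It must therefore vanish identically, i.e.\ $f=g_{\Lambda_2}$.
\item The $h_{M_2}$ case is the same, with the same interior point and using $m_2/M_2<m_1/M_1$.
\end{itemize}
This argument replaces your exact-equality step. Without the exact values of $g_{\Lambda_2}$ and $h_{M_2}$ at the ancestral vectors, neither termination nor stopping goes through.

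\textbf{Two smaller points.}
\begin{itemize}
\item The base step must produce $N(0,1)$ and $N(-3,1)$ for the root $(1,5;1,3)$, unless $f\in\{g_1,h_1\}$. Landing in the hypotheses of \Cref{lem:Schmidt511}, as you propose, would be a contradiction, not a starting point.
\item The child is read off the joint pattern of all four signs; for example, child $(2,2)$ needs $N$ at one $\Lambda_2$-vector and at one $M_2$-vector. No change of variables or parity bookkeeping is needed, since $(l,\Lambda)\in\Z_1^2$ and $(m,M)\in\Z_0^2$ automatically.
\end{itemize}
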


The proof of this lemma actually yields a stronger statement. Combined with Schmidt's result from \cite{SchmidtCforms} that the forms $g_\Lambda$ and $h_M$ are pairwise distinct, it follows from the proof of \cite[Lemma 5.12]{Sch} that

\begin{lemma}\label{lem:N_inequalities}
    For $r=1,\dots,R$, let $(\Lambda_1^{(r)},\Lambda_2^{(r)};M_1^{(r)},M_2^{(r)})$ with associated quintuple $(\varepsilon^{(r)},l_1^{(r)},l_2^{(r)};m_1^{(r)},m_2^{(r)})$ be a downward path in the tree of \Cref{fig:Vulakh_tree}. Then for all $r=1,\dots,R-1$ we have
    \begin{equation}\label{eq:N_inequalities}
    \begin{aligned}
        N(l_1^{(r)},\Lambda_1^{(r)}) & &\text{and}\quad& N(m_1^{(r)}-4M_1^{(r)},M_1^{(r)}) &  \text{in case }\varepsilon^{(r)}=+1, \\
        N(l_1^{(r)}-4\Lambda_1^{(r)},\Lambda_1^{(r)}) & &\text{and}\quad& N(m_1^{(r)},M_1^{(r)}) &  \text{in case }\varepsilon^{(r)}=-1.
    \end{aligned}
    \end{equation}
\end{lemma}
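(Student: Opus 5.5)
Throughout, I take $f$ to be the form attached to the bottom vertex of the path, normalized to be monic: $f=g_{\Lambda_2^{(R)}}$, or $f=h_{M_2^{(R)}}$ in the analogous case. The plan is to turn the sign conditions $N(\cdot,\cdot)$ into statements about where certain rationals sit relative to the roots of $f$. Write $f(x,y)=(x-\theta y)(x-\Theta y)$. By \Cref{lem:roots_of_F} with $n=4$, we have $-4<\Theta<-3<0<\theta<1$. Since $f$ is monic, $N(x,y)$ with $y>0$ holds if and only if $\Theta<x/y<\theta$. By \eqref{eq:l_less_than_Lambda}, $l_1/\Lambda_1\in[0,1]$ and $(m_1-4M_1)/M_1\in[-4,-3]$, and similarly with $l$ and $m$ exchanged. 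So the required conditions reduce to one-sided inequalities:
\begin{equation*}
\frac{l_1^{(r)}}{\Lambda_1^{(r)}}<\theta \quad\text{and}\quad \frac{m_1^{(r)}-4M_1^{(r)}}{M_1^{(r)}}>\Theta \qquad\text{when }\varepsilon^{(r)}=+1,
\end{equation*}
and $\frac{m_1^{(r)}}{M_1^{(r)}}<\theta$, $\frac{l_1^{(r)}-4\Lambda_1^{(r)}}{\Lambda_1^{(r)}}>\Theta$ when $\varepsilon^{(r)}=-1$. The substitution $x\mapsto -x-4y$ preserves $f$ up to the symmetry \eqref{eq:identity_F} and swaps the two roots via $\theta\mapsto -4-\Theta$. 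I therefore expect each $\Theta$-condition to be the mirror image of a $\theta$-condition, so only the $\theta$-side needs to be handled.

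For the $\theta$-side, I would use the determinant identities \eqref{eq:determinant_equation} and \eqref{eq:determinant_equation2}. These show that the fractions $l_i/\Lambda_i$ and $m_i/M_i$ attached to a vertex are ordered according to $\varepsilon$, with explicit gaps. For instance,
\begin{equation*}
\frac{l_2}{\Lambda_2}-\frac{l_1}{\Lambda_1}=\frac{2\varepsilon M_1^2}{\Lambda_1\Lambda_2}
\quad\text{and}\quad
\frac{m_2}{M_2}-\frac{l_2}{\Lambda_2}=\frac{-\varepsilon M_1}{M_2\Lambda_2}.
\end{equation*}
Using the recursive formulas for $(\Lambda_{ij},M_{ij},l_{ij},m_{ij})$ and $\varepsilon_{ij}$, I would prove by induction on $r$ that the relevant ratio at vertex $r$ lies strictly below every ratio $l_2^{(r')}/\Lambda_2^{(r')}$ and $m_2^{(r')}/M_2^{(r')}$ for $r'>r$. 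In other words, going down the path the nested Farey-type intervals close in monotonically from the correct side. It then remains to compare with $\theta$ itself. By \Cref{lem:theta}, $\theta$ differs from $l_2^{(R)}/\Lambda_2^{(R)}$ by $c_4/(\Delta(\Lambda_2^{(R)})^2)$, which is much smaller than the gap $2(M_1^{(R-1)})^2/(\Lambda_1^{(R-1)}\Lambda_2^{(R-1)})$ from the previous level. This gives the strict inequality for every $r\le R-1$. I would check this last comparison using \eqref{eq:M_2>Lambda_1}.

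As a fallback, or to settle the borderline case $r=R-1$, I would argue by contradiction using \Cref{lem:Schmidt511} and \Cref{lem:new_Schmidt511}. The points $(l,\Lambda)$ lie in $\Z_1^2$, since $\Lambda$ is odd and $l$ is even by \eqref{eq:congruences}, and the points $(m,M)$ lie in $\Z_0^2$. Hence \Cref{lem:schmidt_59} gives $|f|\ge1$, respectively $|f|\ge 2$, at these points. Suppose the claimed sign failed at the child level while the parent's $N$-condition held. Then at the two points of \eqref{eq:special_values_of_g_and_h} (or of the mixed pair in \Cref{lem:new_Schmidt511}, chosen according to $\varepsilon$) we would have $f\le -1$, respectively $f\le -2$. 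Those lemmas then give $\Disc(f)\ge 16+4/\Lambda^2>16$, contradicting $\Disc(f)<16$.

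The main obstacle is the bookkeeping across the three child types $(1,2)$, $(2,1)$, $(2,2)$. In particular, the sign flip $\varepsilon_{22}=-\varepsilon$ exchanges which of the $l$- and $m$-fractions must be controlled. This is exactly where the mixed statement \Cref{lem:new_Schmidt511} and identity \eqref{eq:determinant_equation2} have to be used. I would first verify the induction on the initial vertices $(1,5;1,3)$, then $(1,65;3,11)$ and $(5,29;1,17)$, to fix the correct orientation of each inequality before writing the general step.
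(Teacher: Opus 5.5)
Your reduction of each $N$-condition to a one-sided inequality against $\theta$ or $\Theta$ is correct. The step that disposes of the $\Theta$-side, however, is not.

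\textbf{The claimed symmetry fails.} The substitution $x\mapsto -x-4y$ does not preserve $f$. For $f=g_\Lambda$ with parameter $l$,
\[
g_\Lambda(-x-4y,y)=x^2+\Bigl(4+\frac{2l}{\Lambda}\Bigr)xy+\frac{\lambda+4l}{\Lambda}\,y^2 ,
\]
which is the same form with $l$ replaced by $-l$. Its roots $-4-\theta,-4-\Theta$ agree with $\{\theta,\Theta\}$ only if $l=0$. The identity \eqref{eq:identity_F} is the non-integral involution $x\mapsto x+(n-2k/m)y$, $y\mapsto -y$, so it does not send lattice points to lattice points.

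What \eqref{eq:theta_and_Theta} actually gives is $\theta=\rho-\delta$ and $\Theta+4=\rho+\delta$, where $\rho=k/m$ is the ratio of the bottom form and $\delta=4-\Delta$. Hence, for $\varepsilon^{(r)}=+1$, the condition $N(m_1^{(r)}-4M_1^{(r)},M_1^{(r)})$ means $m_1^{(r)}/M_1^{(r)}>\rho+\delta$. This is an upper bound on $\rho$ in terms of the \emph{other} endpoint $m_1^{(r)}/M_1^{(r)}$, and nothing about $l_1^{(r)}/\Lambda_1^{(r)}$ implies it; the same happens for $\varepsilon^{(r)}=-1$. So half of the lemma is left unproved, and your one-sided induction (``lies strictly below'') cannot supply it.

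\textbf{The margin step uses the wrong gap.} The distance from $\rho$ to $l_1^{(r)}/\Lambda_1^{(r)}$ is not bounded below by the level-$(R-1)$ gap, because an endpoint can be inherited unchanged down the path. For $(1,5;1,3)\to(1,65;3,11)$, the distance from $\rho=18/65$ to $l_1^{(1)}/\Lambda_1^{(1)}=0$ is $18/65$, smaller than the level-$1$ gap $2/5$.

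\textbf{Both issues are repaired by making the nesting two-sided.} Let $J_r$ be the open interval with endpoints $l_1^{(r)}/\Lambda_1^{(r)}$ and $m_1^{(r)}/M_1^{(r)}$. From \eqref{eq:determinant_equation}, \eqref{eq:determinant_equation2} and $\Lambda_1+\Lambda_2=2M_1M_2$, $2\Lambda_1\Lambda_2=M_1^2+M_2^2$, one gets for $\varepsilon=+1$
\[
\frac{l_1}{\Lambda_1}<\frac{m_2}{M_2}<\frac{l_2}{\Lambda_2}<\frac{m_1}{M_1},\qquad\text{with gaps } \frac{M_1}{\Lambda_1M_2},\ \frac{M_1}{M_2\Lambda_2},\ \frac{M_2}{M_1\Lambda_2},
\]
and the reversed order for $\varepsilon=-1$.
\begin{itemize}
\item The intervals of the three children therefore tile $J_r$, so $J_R\subset J_{R-1}\subset\dots\subset J_1$.
\item The bottom ratio $\rho$ (equal to $l_2^{(R)}/\Lambda_2^{(R)}$ or $m_2^{(R)}/M_2^{(R)}$) lies in $J_R$ at distance larger than $\delta$ from both endpoints. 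Here $\delta=1/(\Delta\Lambda^2)$, respectively $2/(\Delta M^2)$.
\item Hence $[\theta,\Theta+4]\subset J_R\subset J_r$, which is exactly \eqref{eq:N_inequalities} for both signs of $\varepsilon^{(r)}$. Under your reading of $f$ it even holds for $r=R$.
\end{itemize}

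\textbf{Comparison with the paper.} With this fix your route genuinely differs from the paper's. The paper gives no computation: it appeals to the descent in the proof of Schmidt's Lemma 5.12, combined with the pairwise distinctness of the forms $g_\Lambda,h_M$ from \cite{SchmidtCforms}. Your corrected argument is self-contained and needs neither ingredient.

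Your fallback paragraph gestures at that descent. As written, though, it does not explain why a failed sign would force $f\le -1$ (resp.\ $f\le -2$) at the two points required by \Cref{lem:Schmidt511} or \Cref{lem:new_Schmidt511}. So it cannot replace the missing half.
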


Finally, to complete the proof of \Cref{thm:uncountably_many_for_c=2}, it suffices, by \Cref{thm:roots_inequality_theorem}, to pass to limits of roots of $C$--minimal forms along downward infinite paths, since the property \eqref{eq:1/2|q|^2} is closed. The argument is analogous to the proof of \cite[Chapter II, Lemma 14]{Cassels}.

\begin{theorem}
    Let $(\Lambda_1^{(r)},\Lambda_2^{(r)};M_1^{(r)},M_2^{(r)})$ with associated quintuple $(\varepsilon^{(r)},l_1^{(r)},l_2^{(r)};m_1^{(r)},m_2^{(r)})$, $r=1,2,\dots$ be a downward infinite path in the tree of \Cref{fig:Vulakh_tree}. Let $\eta^{(r)}>0$ (or $\eta^{(r)}<0$) be the positive (or negative) root of $g_{\Lambda_1^{(r)}}$. Then, the limit 
    \begin{equation}\label{eq:limit_eta}
        \eta = \lim_{r\to\infty}\eta^{(r)},
    \end{equation}
    exists and is distinct for each path. The same holds replacing $g_{\Lambda_1^{(r)}}$ by $h_{M_1^{(r)}}$.
\end{theorem}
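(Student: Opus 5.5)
We follow the proof of \cite[Chapter II, Lemma 14]{Cassels}, replacing the Markov forms by the $C$--minimal forms and the Markov tree by the tree of \Cref{fig:Vulakh_tree}. Write $g^{(r)}:=g_{\Lambda_1^{(r)}}=x^2+B_ry+\dots$, normalized as $x^2+B_rxy+C_ry^2$. By \eqref{eq:forms} we have $B_r=4-2l_1^{(r)}/\Lambda_1^{(r)}\in[2,4]$, and $\Disc(g^{(r)})=16-4/(\Lambda_1^{(r)})^2<16$. The coefficients $(B_r,C_r)$ therefore range in a compact set. Any limit point $f=x^2+Bxy+Cy^2$ of $g^{(r)}$ has $\Disc(f)=16$, since $\Lambda_1^{(r)}\to\infty$ along a downward path by \eqref{eq:M_2>Lambda_1}. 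Because $\eta^{(r)}$ is an explicit continuous function of $(B_r,C_r)$, namely the larger or smaller root of $x^2+B_rx+C_r$, existence of \eqref{eq:limit_eta} reduces to showing that the sequence $(B_r,C_r)$ converges.

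For convergence, I would use the sign information of \Cref{lem:N_inequalities}. For fixed $r$, every later form $g^{(r')}$ with $r'>r$ satisfies the two $N$-relations \eqref{eq:N_inequalities} at the points determined by the $r$-th solution. Using the exact values $g(\cdot)=1$ and $h(\cdot)=2$ from \eqref{eq:special_values_of_g_and_h}, these negativity conditions localize $x/y$ relative to the roots. Since $C_r$ is determined by $B_r$ and the discriminant, which converges to $16$, it suffices to control $B_r$. The conditions $N(l,\Lambda)$ and $N(l-4\Lambda,\Lambda)$ (or their $h$-analogues) force the centre $-B/2$ of the two roots into an interval whose endpoints are the consecutive Farey-type fractions $l_1^{(r)}/\Lambda_1^{(r)}$ and $m_1^{(r)}/M_1^{(r)}$. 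By \eqref{eq:determinant_equation} and \eqref{eq:determinant_equation2}, these differ by about $M_1/(M_2\Lambda_2)$, which tends to $0$. The resulting nested intervals shrink, so $B_r$ is Cauchy. This nested-interval step is the one I expect to be the main obstacle: one must check, case by case in $\varepsilon^{(r)}$, that the constraints from consecutive levels really are nested and cut down the admissible $B$. I would first try to reduce each case to \Cref{lem:Schmidt511} and \Cref{lem:new_Schmidt511}. These give $\Disc\geq 16+4/\Lambda_2^2$ whenever both constraints fail in the wrong direction, which contradicts $\Disc<16$ for all later forms.

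For distinctness, take two different infinite paths. They agree up to some level and then pass to different children $(i,j)\neq(i',j')$ of the same solution. I would show that the limits lie in disjoint closed intervals separated by one of the Farey points above. At that point one limit form is $P$ and the other is $N$. A strict sign is needed here, and I would obtain it from the strict inequality in \Cref{lem:N_inequalities} together with the bound $|f|\ge1$ on $\Z_1^2$ or $|f|\ge2$ on $\Z_0^2$, which passes to limits by continuity. Since a real quadratic $x^2+Bx+C$ with fixed discriminant $16$ is determined by $B$, different $B$ give different roots $\eta$. The argument for $h_{M_1^{(r)}}$ is identical, using the $M$-halves of \eqref{eq:N_inequalities} and the second parts of the two discriminant lemmas.
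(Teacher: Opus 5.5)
Your proposal has a genuine gap: its first claim is false, and both the convergence and the distinctness arguments depend on it.

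\textbf{Existence of the limit.} You assert that $\Lambda_1^{(r)}\to\infty$ along every downward path, but \eqref{eq:M_2>Lambda_1} only bounds $\Lambda_2$ and $M_2$ from below. The child $(i,j)=(1,2)$ keeps $\Lambda_1$, and the child $(2,1)$ keeps $M_1$. So the path $(1,5;1,3)\to(1,65;3,11)\to(1,901;11,41)\to\cdots$ has $\Lambda_1^{(r)}=1$ and $l_1^{(r)}=0$ throughout.

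\begin{itemize}
\item On this path $g_{\Lambda_1^{(r)}}=g_1$, which has discriminant $12$, not $16$.
\item More seriously, your nested intervals do not shrink. Here $l_1^{(r)}/\Lambda_1^{(r)}=0$ while $m_1^{(r)}/M_1^{(r)}=1,\frac{1}{3},\frac{3}{11},\frac{11}{41},\dots\to 2-\sqrt3$. So constraints with these two endpoints cannot pin down the middle coefficient of $h_{M_1^{(r)}}$.
\item The estimate $M_1/(M_2\Lambda_2)$ you take from \eqref{eq:determinant_equation2} concerns the index-$2$ fractions $l_2/\Lambda_2$ and $m_2/M_2$, not the index-$1$ ones.
\end{itemize}

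Convergence, which you already flagged as unverified, is therefore not established. The missing and much simpler idea is to compare index $1$ with index $2$ of the same kind. By \eqref{eq:determinant_equation} and \eqref{eq:M_2>Lambda_1},
\begin{equation*}
\Bigl|\frac{l_1}{\Lambda_1}-\frac{l_2}{\Lambda_2}\Bigr|=\frac{2M_1^2}{\Lambda_1\Lambda_2}\le \frac{2}{\Lambda_1^2},\qquad \Bigl|\frac{m_1}{M_1}-\frac{m_2}{M_2}\Bigr|=\frac{2\Lambda_1}{M_1M_2}\le \frac{2}{M_1^2}.
\end{equation*}
Along the path, the index-$1$ data either stays the same or is replaced by the index-$2$ data, whose denominator is strictly larger. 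Hence the total variation of $l_1^{(r)}/\Lambda_1^{(r)}$ (resp.\ $m_1^{(r)}/M_1^{(r)}$) is at most $\sum_{n\ge1}2/n^2$. The coefficients are continuous in $l/\Lambda$ and $1/\Lambda$ (resp.\ $m/M$ and $1/M$), so they converge. The eventually constant case is trivial.

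\textbf{Distinctness.} Saying that the two limits are separated at some point, with one form $P$ and the other $N$ there, states the conclusion rather than proving it. Your sketch never explains why the second limit cannot also be $N$ at the first path's points. Your ``disjoint closed intervals'' rest on the nested-interval picture that fails above.

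The needed input is that no form with $\Disc\le16$ can satisfy the $N$-conditions of \Cref{lem:N_inequalities} for two different children of a common vertex. Every limit form has $\Disc\le 16$, so this applies.
\begin{itemize}
\item Pairing $(1,2)$ or $(2,1)$ with $(2,2)$ is excluded by \Cref{lem:Schmidt511}.
\item Pairing $(1,2)$ with $(2,1)$ is excluded by \Cref{lem:new_Schmidt511}.
\end{itemize}
Both force $\Disc>16$. The thresholds $-1$ and $-2$ needed there do come, as you say, from $|f|\ge1$ on $\Z_1^2$ and $|f|\ge2$ on $\Z_0^2$ passing to the limit.

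Finally, your last step (that discriminant $16$ determines the form via $B$) fails on paths where $\Lambda_1^{(r)}$ or $M_1^{(r)}$ stabilizes. There the limit is an actual $g_\Lambda$ or $h_M$ of discriminant $<16$, and you need Schmidt's result that these forms are pairwise distinct.
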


\begin{proof}
    We first consider the family $h_{M_1^{(r)}}$. Since the roots of a quadratic polynomial depend continuously on its coefficients, it suffices to prove that the coefficients of
    \begin{equation*}
        h_{M_1^{(r)}}(x,y)=x^2+\left(4-\frac{2m_1^{(r)}}{M_1^{(r)}}\right)xy+\left(\frac{\mu_1^{(r)}-4m_1^{(r)}}{M_1^{(r)}}\right)y^2,
    \end{equation*}
    converge. Since neighboring solutions are related by \eqref{eq:neighbors_solutions}, it is enough to consider those indices $r$ for which $M_1^{(r+1)}=M_2^{(r)}$. By \eqref{eq:determinant_equation}
    \begin{equation*}
        \frac{m_1^{(r)}}{M_1^{(1)}}-\frac{m_2^{(r)}}{M_2^{(1)}}=\frac{2\varepsilon^{(r)}\Lambda_1^{(r)}}{M_1^{(r)}M_2^{(r)}}.
    \end{equation*}
    Using \eqref{eq:M_2>Lambda_1}, we obtain
    \begin{equation*}
        \abs{\frac{m_1^{(r)}}{M_1^{(1)}}-\frac{m_2^{(r)}}{M_2^{(1)}}}=\frac{2\Lambda_1^{(r)}}{M_1^{(r)}M_2^{(r)}}\leq \frac{2}{(M_1^{(r)})^2}.
    \end{equation*}
    If $M_1^{(r)}$ is eventually constant, then the forms $h_{M_1^{(r)}}$ eventually stabilize. Otherwise, the distinct values of $M_1^{(r)}$ form a strictly increasing sequence, and since $\sum_{n\ge1}n^{-2}$ converges, it follows that the coefficient $4-2m_1^{(r)}/M_1^{(r)}$ converges.
    
    Similarly, using \eqref{eq:lambda_i_mu_i} and $0\leq m_i\leq M_i$,
    \begin{align*}
        &\abs{\frac{\mu_1^{(r)}}{M_1^{(r)}}-\frac{\mu_2^{(r)}}{M_2^{(r)}}} \\
        &\leq\abs{\left(\frac{m_1^{(r)}}{M_1^{(r)}}\right)^2-\left(\frac{m_2^{(r)}}{M_2^{(r)}}\right)^2}+\abs{\frac{2}{(M_1^{(r)})^2}-\frac{2}{(M_2^{(r)})^2}}+\abs{\frac{4m_1^{(r)}}{M_1^{(r)}}-\frac{4m_2^{(r)}}{M_2^{(r)}}} \\
        &\leq 6\abs{\frac{m_1^{(r)}}{M_1^{(1)}}-\frac{m_2^{(r)}}{M_2^{(1)}}}+\frac{2}{(M_1^{(r)})^2}\leq\frac{14}{(M_1^{(r)})^2}.
    \end{align*}
    Hence the coefficient $(\mu_1^{(r)}-4m_1^{(r)})/M_1^{(r)}$ also converges.
    
    The proof for the family $g_{\Lambda_1^{(r)}}$ is completely analogous. Indeed,
    \begin{equation*}
        g_{\Lambda_1^{(r)}}(x,y)=x^2+\left(4-\frac{2l_1^{(r)}}{\Lambda_1^{(r)}}\right)xy+\left(\frac{\lambda_1^{(r)}-4l_1^{(r)}}{\Lambda_1^{(r)}}\right)y^2,
    \end{equation*}
    we have
    \begin{equation*}
        \abs{\frac{l_1^{(r)}}{\Lambda_1^{(1)}}-\frac{l_2^{(r)}}{\Lambda_2^{(1)}}}=\frac{2M_1^{(r)}}{\Lambda_1^{(r)}\Lambda_2^{(r)}}\leq \frac{2M_1^{(r)}}{\Lambda_1^{(r)}M_2^{(r)}}\leq\frac{2}{(\Lambda_1^{(r)})^2},
    \end{equation*}
    and the same argument shows that the coefficients of $g_{\Lambda_1^{(r)}}$ converge. Therefore, the limit \eqref{eq:limit_eta} exists for both roots of both families of quadratic forms.

    By \Cref{lem:N_inequalities}, the limit forms, which we denote by $G$ and $H$, satisfy the inequalities in \eqref{eq:N_inequalities}. If either of the sequences $\Lambda_1^{(r)}$ or $M_1^{(r)}$ is eventually constant, then the corresponding limit form is one of the forms $g_\Lambda$ or $h_M$, which are pairwise distinct by \cite{SchmidtCforms}. We may therefore assume that both sequences are unbounded. Since
    \begin{equation*}
        \Disc(g_{\Lambda_1^{(r)}})=16-\frac{4}{(\Lambda_1^{(r)})^2},\qquad \Disc(h_{M_1^{(r)}})=16-\frac{8}{(M_1^{(r)})^2},
    \end{equation*}
    it follows that $\Disc(G)=\Disc(H)=16$.

    Now let $(\tilde{\Lambda}_1^{(r)},\tilde{\Lambda}_2^{(r)};\tilde{M}_1^{(r)},\tilde{M}_2^{(r)})$ be a downward infinite path distinct from $(\Lambda_1^{(r)},\Lambda_2^{(r)};M_1^{(r)},M_2^{(r)})$. We shall prove that the corresponding limit forms $\tilde G$ and $\tilde H$ are different from $G$ and $H$. Let $R+1$ be the first index at which the two paths diverge, and suppose, by contradiction, that $G=\tilde G$ or $H=\tilde H$.

    By \eqref{eq:neighbors_solutions}, the three possible descendants of the common vertex 
    \begin{equation*}
        (\Lambda_1^{(R)},\Lambda_2^{(R)};M_1^{(R)},M_2^{(R)})=(\tilde{\Lambda}_1^{(R)},\tilde{\Lambda}_2^{(R)};\tilde{M}_1^{(R)},\tilde{M}_2^{(R)}),    
    \end{equation*} 
    are
    \begin{align*}
        (\Lambda_1^{(R+1)},-;M_2^{(R+1)},-) &\quad\text{ with quintuple }\quad (\varepsilon^{(R+1)};l_1^{(R+1)},-,m_2^{(R+1)},-), \\
        (\Lambda_2^{(R+1)},-;M_1^{(R+1)},-) &\quad\text{ with quintuple }\quad (\varepsilon^{(R+1)};l_2^{(R+1)},-,m_1^{(R+1)},-), \\
        (\Lambda_2^{(R+1)},-;M_2^{(R+1)},-) &\quad\text{ with quintuple }\quad (\varepsilon^{(R+1)};l_2^{(R+1)},-,m_2^{(R+1)},-).
    \end{align*}
    Since the two paths separate at this vertex, the form $G$ or $H$ must satisfy the inequalities in \eqref{eq:N_inequalities} for two distinct choices among the three descendants above. However, either of the first two choices is incompatible with the third, since \Cref{lem:Schmidt511} would imply that the corresponding form has discriminant strictly larger than $16$. Moreover, the first and second choices are incompatible by \Cref{lem:new_Schmidt511} by the same discriminant reason. This contradiction shows that the limit forms associated with distinct downward infinite paths are distinct.
\end{proof}

\section{Dynamical Lagrange spectra}\label{sec:dynamical_spectra}

The dynamical description of the classical Lagrange and Markov spectra made by Perron \cite{Perron} motivate the following definitions. Let $\varphi:X\to X$ be a continuous map defined on a metric space $X$ and $f:X\to \mathbb{R}$ be a continuous function. Given a point $x\in X$, we can define its Lagrange and Markov value as
\begin{eqnarray*}
   \ell_{\varphi,f}: X &\rightarrow& \mathbb{R} \\
   x &\mapsto& \ell_{\varphi,f}(x)=\limsup_{n\to \infty}f(\varphi^n(x)),
\end{eqnarray*}
\begin{eqnarray*}
   m_{\varphi,f}:X &\rightarrow& \mathbb{R} \\
   x &\mapsto& m_{\varphi,f}(x)=\sup_{n\to \infty}f(\varphi^n(x)).
\end{eqnarray*}
The Lagrange and Markov dynamical spectra associated with the data $(\varphi,f,X)$ are the sets
\begin{equation*}
    L_{\varphi,f,X}:=\{\ell_{\varphi,f}(x):x\in X\},
\end{equation*}
and
\begin{equation*}
    M_{\varphi,f,X}:=\{m_{\varphi,f}(x):x\in X\}.
\end{equation*}

\begin{example}
The classical Lagrange and Markov spectra can be realized as dynamical spectra where the dynamics is a smooth horseshoe over a surface. More precisely, define the natural extension of the Gauss map $T_1:(0,1]^2\to[0,1)^2$ by 
\begin{equation*}
    T_1(x,y)=\left(\frac{1}{x}-\left\lfloor \frac{1}{x}\right\rfloor, \frac{1}{\left\lfloor \frac{1}{x}\right\rfloor +y }\right),
\end{equation*}
and consider the height function $f(x,y)=\frac{1}{x}+y$. Nakada, Ito and Tanaka showed in \cite{NIT} that the map $T_1$ admits the smooth invariant density $d\mu=\frac{1}{(1+xy)^2}dxdy$. 

The dynamical Lagrange spectra with height $f$ associated to $T_1$ coincides with the real Lagrange spectrum up to a certain height. Indeed, if we denote the Gauss--Cantor set $C(N)=\{x=[0;a_1,a_2,\dots]:1\leq a_n\leq N\}$ for $N\in \mathbb{N}_{>0}$, then $\Lambda(N)=C(N)\times C(N)$ is a conservative horseshoe for $T_1$ and Moreira and Lima \cite{ML} proved that the dynamical Lagrange spectra of $T_1$ with respect to this horseshoe coincides with $L\cap (-\infty,\sqrt{N^2+4N}]$ for $N\geq 4$, where $L$ is the classical (real) Lagrange spectrum.
\end{example}

In general, we will consider the following setting (and we refer to Palis-Takens book \cite{PT} and Cerqueira-Matheus-Moreira paper \cite{CerqueiraMatheusGugu} for more details). Let $S$ be a surface and $\varphi:S\to S$ be a $C^2$ diffeomorphism possessing a horsehoe $\Lambda$ (i.e., a non-empty compact invariant hyperbolic set of saddle type which is transitive, locally maximal, and not reduced to a periodic orbit). Given $x\in\Lambda$, we denote by $e_x^u$ and $e_x^s$ unit vectors in the unstable and stable bundles of the hyperbolic decomposition $T_xS=E_x^u\oplus E_x^s$ of $\Lambda$, respectively. Let us fix a geometrical Markov partition $\{R_a\}_{a\in\mathcal{A}}$ with sufficiently small diameter consisting of rectangles $R_a\simeq I_a^s\times I_a^u$ delimited by compact pieces $I_a^s$, resp. $I_a^u$, of stable, resp. unstable, manifolds of certain points of $\Lambda$. We define the subset $\mathcal{T}\subset\mathcal{A}^2$ of admissible transitions as the subset of pairs $(a_0, a_1)\in\mathcal{A}^2$ such that $\varphi(R_{a_0})\cap R_{a_1}\neq\emptyset$. In this way, the dynamics of $\varphi$ on $\Lambda$ is topologically conjugated to a Markov shift $\Sigma_{\mathcal{T}}\subset\mathcal{A}^{\mathbb{Z}}$ of finite type associated to $\mathcal{T}$. 

\begin{figure}[htb!]
\includegraphics{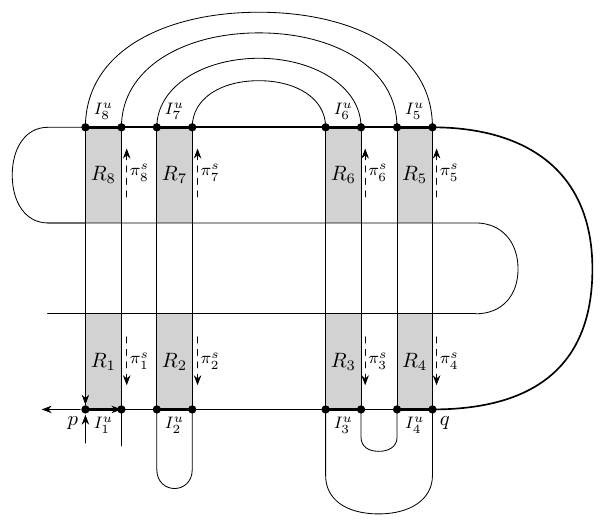}
\caption{Geometry of a horseshoe $\Lambda$. Notice that for the map $T_1$, the horizontal direction is expanding while the vertical is contracting.}
\end{figure}

The stable and unstable manifolds of $\Lambda$ can be extended to locally invariant $C^{1+\varepsilon}$-foliations in a neighborhood of $\Lambda$ for some $\varepsilon>0$. Therefore, we can use these foliations to define projections (also called holonomies) $\pi_a^u:R_a\to I_a^s\times\{i_a^u\}$ and $\pi_a^s: R_a\to \{i_a^s\}\times I_a^u$ of the rectangles into the connected components $I_a^s\times\{i_a^u\}$ and $\{i_a^s\}\times I_a^u$ of the stable and unstable boundaries of $R_a$ where $i_a^u\in\partial I_a^u$ and $i_a^s\in\partial I_a^s$ are fixed arbitrarily. Using these projections, we have the stable and unstable Cantor sets 
$$K^s=\bigcup\limits_{a\in\mathcal{A}}\pi_a^u(\Lambda\cap R_a) \quad \textrm{ and } \quad K^u=\bigcup\limits_{a\in\mathcal{A}}\pi_a^s(\Lambda\cap R_a)$$
associated to $\Lambda$. The stable and unstable Cantor sets $K^s$ and $K^u$ are $C^{1+\varepsilon}$-dynamically defined Cantor sets. 

\begin{definition}
    Let $\mathcal{A}$ be a finite alphabet, $\mathcal{B}\subset \mathcal{A}^2$, and $\Sigma$ the subshift of finite type of $\mathcal{A}^{\mathbb{Z}}$ with allowed transitions $\mathcal{B}$ which is topologically mixing, and such that every letter in $\mathcal{A}$ occurs in $\Sigma$. An expanding map of type $\Sigma$ is a map $\psi$ with the following properties:
	
	\begin{enumerate}
		\item[(1)] the domain of $\psi$ is a disjoint union $\sqcup_{\mathcal{B}}I(a,b)$, where, for each $(a,b)$, $I(a,b)$ is a compact subinterval of $I(a):=[0,1]\times \{a\}$;
		
		\item[(2)] for each $(a,b)\in \mathcal{B}$, the restriction of $\psi$ to $I(a,b)$ is a smooth diffeomorphism onto $I(b)$ satisfying $|D\psi(t)|>1$, for all $t$.
	\end{enumerate}
	The regular Cantor set associated to $\psi$ is defined as the maximal invariant set
	\begin{equation*}
	    K=\bigcap_{n\ge 0}\psi^{-n}\left(\sqcup_{\mathcal{B}}I(a,b)\right).
	\end{equation*}
\end{definition}

In a more familiar fashion, we can realize equivalently regular Cantor sets as hyperbolic sets on the real line (see \cite[Remark 2.3]{Phasetransition}). 

\begin{definition}
We say that a dynamically defined Cantor set $K$ is \emph{non-essentially affine} if there is no global conjugation $h\circ\psi\circ h^{-1}$ such that all branches $(h\circ\psi\circ h^{-1})|_{h(I_j)}$, $j=1,\dots,r$ are affine maps of the real line. 
\end{definition}

The importance of the previous definition is because of the following \cite[Theorem 1]{CartesianProduct}.

\begin{theorem}[Moreira's dimension formula]\label{thm:gugu_dimension_formula}
    Let $K,K^\prime\subset\R$ be dynamically defined Cantor sets such that $K$ is of class $C^2$ and non essentially affine. Then, given a $C^1$ map $f$ defined on a neighborhood of $K\times K^\prime$ to $\R$ such that, in some point of $K\times K^\prime$ its gradient is not parallel to any of the two coordinate axis. Then we have \[\dim_H(f(K\times K^\prime))=\min\{1,\dim_H(K)+\dim_H(K^\prime)\}.\]
\end{theorem}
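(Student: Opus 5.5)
The plan is to prove the two inequalities separately. The upper bound is soft. Since $f$ is $C^1$, it is locally Lipschitz near the compact set $K\times K^\prime$, so
\begin{equation*}
\dim_H f(K\times K^\prime)\leq \min\{1,\dim_H(K\times K^\prime)\}.
\end{equation*}
For dynamically defined (regular) Cantor sets, Hausdorff and box dimension coincide. Hence $\dim_H(K\times K^\prime)=\dim_H K+\dim_H K^\prime$, and the upper bound follows.

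For the lower bound I would localize and renormalize. Pick a point $(x_0,y_0)\in K\times K^\prime$ where $\nabla f$ is not parallel to either axis. By continuity this persists on a neighbourhood, which we may take to be a product of two small Markov cylinders $I\times J$. I would then pass to subhorseshoes, i.e.\ regular Cantor subsets $K_1\subset K\cap I$ and $K_1^\prime\subset K^\prime\cap J$ given by finitely many long words, with dimensions within $\varepsilon$ of $\dim_H K$ and $\dim_H K^\prime$. Zooming into a pair of cylinders of sizes $r$ and $r^\prime$ via the inverse branches of the expanding maps, the set $f(K_1\times K_1^\prime)$ restricted to that pair becomes, up to an affine change of coordinates in the image, close in $C^1$ to
\begin{equation*}
(s,t)\mapsto k(s)+\lambda\, k^\prime(t).
\end{equation*}
Here $k,k^\prime$ are limit geometries (limits of renormalized branches), and $\lambda$ is determined by the ratio $r^\prime/r$ together with the slope of $\nabla f$. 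So it suffices to show that for a suitable choice of renormalization data the linear-type projection $\pi_\lambda(k(K_1)\times k^\prime(K_1^\prime))$ has dimension at least $\min\{1,d\}-\varepsilon$, where $d=\dim K_1+\dim K_1^\prime$. The dimension must also persist under small $C^1$ perturbation of the map. I would get this via a robust (``$\varepsilon$-stable'') version, in which a large compact piece of the projected measure has $L^2$/energy control.

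Marstrand's projection theorem provides this for Lebesgue-almost every $\lambda$. The main obstacle is to produce a good $\lambda$ deterministically, since a priori the reachable parameters form only a countable set. Here non-essential affinity enters. I would first use it to show that the space of limit geometries of $K$ is genuinely nonlinear, i.e.\ the family of renormalizations has nonzero curvature. Combined with the freedom of choosing the cylinder sizes, with $\log(r^\prime/r)$ dense modulo the relevant periods, this should make the set of attainable pairs (limit geometry, $\lambda$) accumulate on a set of positive measure in a one-parameter family. I would then run Marstrand's argument with a transversality estimate in $\lambda$: an integral over a parameter interval of the energy of the projected product measure is finite. From this I would deduce that some attainable parameter, and then by recurrence of the scale dynamics a neighbourhood of realized renormalizations, gives dimension at least $\min\{1,d\}-\varepsilon$. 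This step, which uses curvature from non-essential affinity to replace ``almost every direction'' with ``some realized direction'', is where I expect all the real difficulty to lie. Letting $\varepsilon\to0$ then finishes the proof.
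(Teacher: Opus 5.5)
The paper gives no proof of this statement. It is quoted from \cite[Theorem 1]{CartesianProduct} and used as a black box, so your proposal is really a reconstruction of Moreira's theorem.

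Your upper bound is correct. $f$ is Lipschitz on a compact neighbourhood of $K\times K^\prime$, and $\dim_H(K\times K^\prime)=\dim_H K+\dim_H K^\prime$ because regular Cantor sets have equal Hausdorff and box dimensions. The overall shape of your lower bound is also reasonable: renormalize to limit geometries, compare with the model $k(s)+\lambda k^\prime(t)$, invoke Marstrand, and let non-essential affinity supply freedom in the parameters.

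However, the lower bound is not actually proved. There is a genuine gap: the step you flag as containing ``all the real difficulty'' is essentially the entire content of the theorem, and the two mechanisms you offer for it would not work as stated.

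\emph{Robustness under $C^1$ perturbation.} Finite energy, or an $L^2$ density, for $\pi_\lambda(\mu\times\mu^\prime)$ at one good $\lambda$ is not stable under $C^1$-small perturbations of the map. If $\|DF-D\pi_\lambda\|\leq\delta$, you only get $|F(x)-F(y)|\geq|\pi_\lambda(x)-\pi_\lambda(y)|-\delta|x-y|$. This says nothing about pairs nearly aligned with the fibres of $\pi_\lambda$, and those are exactly the pairs that control the energy at small scales. Put differently, at smaller scales the local picture of $F(K_1\times K_1^\prime)$ is again a model $k_{\underline\theta}(s)+\lambda^\prime k^\prime_{\underline\theta^\prime}(t)$, with \emph{new} limit geometries and a \emph{new} parameter $\lambda^\prime$. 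Goodness of $\lambda$ tells you nothing about $\lambda^\prime$. So you need good configurations at every scale along a large part of the tree of cylinders, not a single good parameter.

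\emph{Realizing a good parameter.} Marstrand's good set is only a full-measure measurable set; it need not be open. A countable family of realized parameters that accumulates on a positive-measure set can miss it entirely, so ``accumulation'' is the wrong kind of information.

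What is needed to close the argument is the following pair of ingredients.
\begin{itemize}
\item[(a)] A finite-scale, discretized projection estimate, for instance counting the $\rho$-intervals hit at a fixed scale. Its good set is open, so it survives the $C^1$ errors.
\item[(b)] A measure-theoretic recurrence statement for the renormalization dynamics on relative configurations $(\underline\theta,\underline\theta^\prime,\log\lambda)$, in the spirit of the Moreira--Yoccoz scale recurrence lemma. It should guarantee that from a good configuration, a definite proportion of the renormalizations at the next scale are again good.
\end{itemize}
Combining (a) and (b) scale by scale produces a Cantor subset of $f(K\times K^\prime)$ of dimension at least $\min\{1,d\}-\varepsilon$. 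Step (b) is where non-essential affinity must enter. For the affine digit set $K=\{\sum d_j10^{-j}: d_j\in\{0,1\}\}$ with $f(x,y)=x+y$, one has $\dim_H(K+K)=\log 3/\log 10<2\dim_H K$. None of (a) or (b) is formulated in your sketch.

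A smaller point: when you pass to sub-Cantor sets $K_1\subset K$, you must choose the words so that $K_1$ is still non-essentially affine, since your later step relies on that property.
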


Through this paper we will only be interested in Cantor sets defined by iterates of the Gauss map.

\begin{definition}\label{def:Gauss_Cantor} 
Given $B=\{\beta_1,\dots,\beta_\ell\}$, $\ell\geq 2$, a finite alphabet of finite words $\beta_j\in(\mathbb{N}_{>0})^{r_j}$, which is primitive (in the sense that $\beta_i$ does not begin by $\beta_j$ for all $i\neq j$) then the Gauss-Cantor set $K(B)\subseteq [0,1]$ associated with $B$ is defined as 
\begin{equation*}
    K(B):= \{[0;\gamma_1, \gamma_2, \dots] \ \mid\ \gamma_i\in B\}.
\end{equation*}
\end{definition}

\begin{proposition}{\cite[Proposition 1]{M1}}\label{prop:geometric_properties_prop1}
Gauss-Cantor sets are non-essentially affine.
\end{proposition}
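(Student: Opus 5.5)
The plan is to argue by contradiction: suppose $K(B)$ is essentially affine, so that after a global $C^2$ conjugation $h$ every branch of the induced expanding map becomes affine. The branches of the expanding map defining $K(B)$ are the inverse branches of iterates of the Gauss map, $\psi_{\beta_j}(x)=T^{r_j}(x)$ on the cylinder of $\beta_j$. Each such branch is a Möbius transformation $x\mapsto (q_{n-1}x-p_{n-1})/(-q_nx+p_n)$ up to sign, with the integer matrix of the word $\beta_j$. The first step is to recall the standard criterion: $K$ is essentially affine if and only if the \emph{scaling function} (the limit geometry) makes all branches affine. Equivalently, for any two periodic points the Lyapunov data must satisfy a rigid relation. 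I would use the cleaner characterization through the nonlinearity: if $h\circ\psi_j\circ h^{-1}$ is affine for all $j$, then $\log|D\psi_j| = \log|Dh\circ\psi_j|-\log|Dh| + c_j$, a coboundary condition. Differentiating, $D^2\psi_j/D\psi_j = (u\circ\psi_j)D\psi_j - u$ with $u=D^2h/Dh$ continuous on the convex hull of $K$.

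The second step is to exploit the explicit Möbius form. For $\psi(x)=(ax+b)/(cx+d)$ with $ad-bc=\pm1$, we have $D^2\psi/D\psi=-2c/(cx+d)$. Iterating the relation along any finite composition $\psi_w$ of branches gives $-2c_w/(c_wx+d_w)=(u\circ\psi_w)D\psi_w-u$. For long words $w$, $D\psi_w$ is exponentially large on the relevant domain. I would therefore pass instead to the inverse contractions $g_w=\psi_w^{-1}$, which are Möbius maps $x\mapsto[0;w,x]$-type. The relation becomes $u(g_w(x))\,Dg_w(x)-u(x)=D^2g_w(x)/Dg_w(x)$ on $K$. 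As $|w|\to\infty$ the left term $u(g_w x)Dg_w x$ tends to $0$ uniformly. Hence $u(x)=-\lim D^2g_w(x)/Dg_w(x)$, and for $g_w(x)=(p_{n-1}x+p_n)/(q_{n-1}x+q_n)$ this equals $2q_{n-1}/(q_{n-1}x+q_n)=2\beta/(1+\beta x)$, with $\beta=q_{n-1}/q_n=[0;a_n,\dots,a_1]$. The limit must exist independently of the chosen sequence of words, since $u$ is a fixed function.

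The third step, which is where the contradiction lives, is to choose two sequences of words whose reversed ratios $\beta$ converge to different values. Take for instance $w=\beta_1^k$ versus $w=\beta_2\beta_1^{k}$ padded so both end in long blocks. More simply, take words ending in $\beta_1^k$ and in $\beta_2^k$: their $\beta$ tends to the two distinct points $[0;\overline{\beta_1^{\mathrm{rev}}}]$ and $[0;\overline{\beta_2^{\mathrm{rev}}}]$. These are distinct because $B$ is primitive with $\ell\ge2$, so the periodic sequences differ. The function $\beta\mapsto 2\beta/(1+\beta x)$ is injective for fixed $x\in K$, so $u(x)$ would take two values, a contradiction.

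The main obstacle is justifying the first step rigorously: that essential affinity implies a conjugacy $h$ of class $C^2$ (or at least $C^{1+}$ with $u$ continuous) valid on the whole Cantor set. I would invoke the standard fact from Sullivan/Bedford–Fisher limit geometry that any $C^1$ conjugacy to an affine Cantor set of a $C^2$ one is automatically $C^{1+\alpha}$ and can be taken $C^2$ on $K$. If only $C^{1+\alpha}$ is available, I would redo steps two and three at the level of $\log Dh$ using finite differences instead of $u$. The uniform decay of $u(g_wx)Dg_w(x)$ then follows from bounded distortion.
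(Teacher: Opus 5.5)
The paper gives no argument for this statement. It is quoted from Moreira [M1, Proposition~1], so there is no in-paper proof to compare with, and your proposal has to stand as a self-contained replacement for the citation. It does: the argument is correct.

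Its core is this. A conjugacy $h$ that straightens every branch forces $h''/h'=\lim g_w''/g_w'$ along \emph{any} sequence of words $w\in B^*$ with $|w|\to\infty$. For the M\"obius inverse branches $g_w(x)=(p_{n-1}x+p_n)/(q_{n-1}x+q_n)$ this limit is $-2\beta/(1+\beta x)$, where $\beta=q_{n-1}/q_n=[0;a_n,\dots,a_1]$ is governed by the \emph{end} of $w$. This is the infinitesimal form of a simple fact: the affinely normalized limit geometries of $K(B)$ are the maps $x\mapsto(1+\beta)x/(1+\beta x)$, and two of them are affine images of each other only when their $\beta$'s agree. Compared with citing [M1], your route uses nothing beyond the explicit M\"obius form of the branches. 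With the modification in (iv) below, it even rules out $C^1$ conjugacies.

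Four things to tidy.

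(i) From $h\circ\psi_j=A_j\circ h$ one gets $\log\lvert D\psi_j\rvert=c_j+\log\lvert Dh\rvert-\log\lvert Dh\circ\psi_j\rvert$, hence $D^2\psi_j/D\psi_j=u-(u\circ\psi_j)\,D\psi_j$. Your $u$ carries the opposite sign throughout; this is harmless.

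(ii) Your first suggested comparison, $\beta_1^k$ versus $\beta_2\beta_1^k$, gives the same limit $[0;\overline{\beta_1^{\mathrm{rev}}}]$, precisely because $\beta$ only sees the end of the word. Only the second choice, words ending in $\beta_1^k$ versus words ending in $\beta_2^k$, works.

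(iii) Say why the two periodic limits differ. If $\overline{\beta_1^{\mathrm{rev}}}=\overline{\beta_2^{\mathrm{rev}}}$, then $\beta_1,\beta_2$ are powers of a common word. The shorter is then a prefix of the longer, contradicting primitivity.

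(iv) Drop the appeal to an automatic $C^{1+\alpha}$ or $C^2$ upgrade of the conjugacy. It is unnecessary, and as phrased it is not something you can simply quote. If the definition asks for a $C^2$ conjugacy, your derivative argument is already complete. If only $C^1$ is assumed (some smoothness must be, since with homeomorphisms every regular Cantor set would qualify), proceed as follows. Differentiate $h\circ g_w=B_w\circ h$ once. Take logarithms at two distinct points $x,y\in K$ and subtract, which eliminates the slope of $B_w$. Then let $|w|\to\infty$, using uniform continuity of $\log\lvert h'\rvert$ near $K$ and $\lvert g_w(I)\rvert\to0$ ($I$ the convex hull of $K$):
\[
\log\lvert h'(x)\rvert-\log\lvert h'(y)\rvert=\lim_{|w|\to\infty}\bigl(\log\lvert g_w'(x)\rvert-\log\lvert g_w'(y)\rvert\bigr)=-2\log\frac{1+\beta_\infty x}{1+\beta_\infty y}.
\]
Since $\frac{d}{d\beta}\log\frac{1+\beta x}{1+\beta y}=\frac{x-y}{(1+\beta x)(1+\beta y)}\neq0$, the right-hand side is strictly monotone in $\beta_\infty$. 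The two pasts from (ii) therefore give the same contradiction as before.
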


Now we recall some general results on dynamical spectra associated with horseshoes on surfaces. The following theorems are not stated in their most general form, but rather in a simplified shape that is easier to use for applications.

Let $\mathcal{R}_{\varphi,\Lambda}$ denote the $C^r$--open and $C^r$--dense subset of $C^r(S,\R)$ defined by
\begin{equation*}\label{eq:R_varphi_Lambda}
    \mathcal{R}_{\varphi,\Lambda}
    =
    \{f\in C^r(S,\R): Df_x(e_x^u)\neq 0 \text{ and } Df_x(e_x^s)\neq 0,\ \forall x\in\Lambda\}.
\end{equation*}
We also let $\tilde{\mathcal{U}}$ denote the set of $\varphi\in\Diff^2_\omega(S)$, where $\omega$ is a smooth volume form, possessing a horseshoe $\Lambda$ with the following property: for every subhorseshoe $\tilde{\Lambda}\subset\Lambda$ and every function $g\in C^1(S,\R)$ satisfying $Dg_x(e_x^u)\neq0$ and $Dg_x(e_x^s)\neq0$ for some $x\in\tilde{\Lambda}$, one has
\[
\dim_H(g(\tilde{\Lambda}))=\min\{1,\dim_H(\tilde{\Lambda})\}.
\]

By \Cref{prop:geometric_properties_prop1}, the map $T_1$ belongs to $\tilde{\mathcal{U}}$. Indeed, every subhorseshoe of $\Lambda(N)$ decomposes, by the spectral theorem, into subhorseshoes that are products of stable and unstable Cantor sets. These Cantor sets are precisely Gauss--Cantor sets, which are non--essentially affine by \Cref{prop:geometric_properties_prop1}. Therefore, \Cref{thm:gugu_dimension_formula} yields the required dimension formula. For more explicit and more general sufficient conditions ensuring that a horseshoe belongs to $\tilde{\mathcal{U}}$, see \cite[Remark 2.18]{CerqueiraMatheusGugu} and \cite{CartesianProduct}.

It turns out that the fractal geometric properties of dynamical Markov and Lagrange spectra over a horseshoe $\Lambda$ are intimately related to the properties of the one parameter family of maximally invariant hyperbolic sets
\begin{equation*}
    \Lambda_t := \{x:\sup_{n\in\Z}f(\varphi^n(x))\leq t\},
\end{equation*}
which in turn depend on the stable and unstable sets
$$K^s_t=\bigcup\limits_{a\in\mathcal{A}}\pi_a^u(\Lambda_t\cap R_a) \quad \textrm{ and } \quad K^u_t=\bigcup\limits_{a\in\mathcal{A}}\pi_a^s(\Lambda_t\cap R_a).$$

The following proposition is fundamental for estimating the dimension of Markov and Lagrange spectra. It is analogous to \cite[Proposition 3]{LMV} which in turn is based on the proof of the main result of \cite{SergioGuguETDS}. 

\begin{proposition}\label{prop:dimension_of_subimages}
	Let $\varphi\in\tilde{\mathcal{U}}$. Then for every subhorseshoe $\tilde{\Lambda}\subset \Lambda$ and $f\in \mathcal{R}_{\varphi,\Lambda}$, we have $$\min\{1,\dim_H(\tilde{\Lambda})\}=\dim_H(\ell_{\varphi,f}(\widetilde{\Lambda}))=\dim_H(m_{\varphi,f}(\widetilde{\Lambda})).$$
\end{proposition}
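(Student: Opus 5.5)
We prove the two inequalities $\dim_H(\ell_{\varphi,f}(\tilde\Lambda))\le\min\{1,\dim_H(\tilde\Lambda)\}$ and $\dim_H(\ell_{\varphi,f}(\tilde\Lambda))\ge\min\{1,\dim_H(\tilde\Lambda)\}$ separately, and do the same for $m_{\varphi,f}$. Throughout we write $d=\dim_H(\tilde\Lambda)$ and follow the scheme of \cite[Proposition 3]{LMV}.

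\emph{Lower bound.} Since $\tilde\Lambda$ is $\varphi$-invariant and compact, for every $x\in\tilde\Lambda$ the value $\ell_{\varphi,f}(x)$ equals $f(y)$ for some $y$ in the $\omega$-limit set of $x$, which lies in $\tilde\Lambda$; likewise $m_{\varphi,f}(x)$ is a maximum of $f$ over the closure of the orbit. This only gives $\ell_{\varphi,f}(\tilde\Lambda)\subset f(\tilde\Lambda)$, so we need a converse realization.
\begin{enumerate}
\item Fix $\varepsilon>0$. By the argument of \cite{SergioGuguETDS}, choose a subhorseshoe $\Lambda'\subset\tilde\Lambda$ with $\dim_H(\Lambda')>d-\varepsilon$.
\item Choose a point $x_0\in\Lambda'$ near the maximum of $f$ on $\Lambda'$, and a small Markov rectangle $R$ around $x_0$, such that $f|_{R\cap\Lambda'}$ exceeds the values of $f$ on a further subhorseshoe $\Lambda''\subset\Lambda'$ that avoids $R$ and still has $\dim_H(\Lambda'')>d-2\varepsilon$.
\item Build points whose orbits visit $R$ infinitely often, each time through the same point $y\in R\cap\Lambda'$, and otherwise stay in $\Lambda''$. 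For such points, $\ell_{\varphi,f}=f(y)$ and $m_{\varphi,f}=f(y)$. This is possible by shadowing in the Markov shift, because sequences can be concatenated freely inside the horseshoe.
\item Letting $y$ range over a subhorseshoe of $R\cap\Lambda'$ of dimension close to $d$, we get that $\ell_{\varphi,f}(\tilde\Lambda)$ and $m_{\varphi,f}(\tilde\Lambda)$ contain $f(\Lambda^*)$ for some subhorseshoe $\Lambda^*$ with $\dim_H(\Lambda^*)>d-3\varepsilon$.
\item Since $f\in\mathcal R_{\varphi,\Lambda}$, the gradient of $f$ is transverse to both foliations. Hence $\varphi\in\tilde{\mathcal U}$ gives $\dim_H f(\Lambda^*)=\min\{1,\dim_H(\Lambda^*)\}$.
\end{enumerate}
Letting $\varepsilon\to0$ yields the lower bound.

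\emph{Upper bound.} For any $x\in\tilde\Lambda$, the value $\ell_{\varphi,f}(x)$ belongs to $f(\tilde\Lambda)$, and so does $m_{\varphi,f}(x)$ by compactness. Hence both spectra are contained in $f(\tilde\Lambda)$. Since $f$ is Lipschitz, $\dim_H f(\tilde\Lambda)\le\min\{1,d\}$, and in fact $\tilde{\mathcal U}$ gives equality directly.

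The main obstacle is steps 2 and 3 of the lower bound. We need a subhorseshoe of nearly full dimension on which $f$ stays strictly below the values $f(y)$ being realized, so that the $\limsup$ and the $\sup$ along the constructed orbits are attained exactly at the prescribed point $y$. We would handle this as in \cite{LMV}. Take $y$ in a local maximum region of $f$ restricted to a neighbourhood and use the transversality $Df_x(e^u_x)\neq0$ and $Df_x(e^s_x)\neq0$. Then remove a small neighbourhood of the local maximum: this removes little dimension, by continuity of dimension under such removals in the spirit of \cite{SergioGuguETDS}, while keeping a gap in the values of $f$.
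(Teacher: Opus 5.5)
Your upper bound is correct and matches the half of the paper's proof that it calls immediate. The lower bound, however, breaks down in Steps 3--5. Suppose the orbit of a constructed point $x$ returns closer and closer to $y$. Then it shadows the orbit of $y$ on longer and longer time windows, so $\varphi^{n_k+j}(x)\to\varphi^j(y)$ for every fixed $j$. Hence $\ell_{\varphi,f}(x)\geq\sup_j f(\varphi^j(y))=m_{\varphi,f}(y)$, and you can only realize $f(y)$ at points $y$ that are the maximum of $f$ along their own orbit.

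A general $y\in R\cap\Lambda'$ does not have this property. Its orbit runs through $\Lambda'$, not through $\Lambda''$, and it can return to $R$ or pass near other high values of $f$. Nor can a subhorseshoe $\Lambda^*$ supply such points in bulk. Every point with dense orbit in $\Lambda^*$ has $m_{\varphi,f}(y)=\max_{\Lambda^*}f$. In fact $\{f(y):y\in\Lambda^*,\ f(y)=m_{\varphi,f}(y)\}$ is exactly the Markov spectrum of $\Lambda^*$, so this route is circular. Also, an invariant set cannot sit inside a small rectangle $R$, so ``a subhorseshoe of $R\cap\Lambda'$'' is not meaningful. So Step 4's conclusion, that the spectra contain $f(\Lambda^*)$ for a subhorseshoe $\Lambda^*$, is unjustified, and Step 5 applies the $\tilde{\mathcal U}$ formula to the wrong set.

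The paper closes this gap with two ingredients your proposal lacks.

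\emph{The low horseshoe and the buffered peak.} With $t=\max_{\tilde\Lambda}f$, the paper takes from \Cref{prop:prop_1_gugu_villamil} a subhorseshoe $\tilde\Lambda^\eta$, coming from a complete subshift, with $\dim_H(\tilde\Lambda^\eta)>(1-\eta)\dim_H(\tilde\Lambda)$ and $f\le t-\delta$ on it. This, rather than deleting a neighbourhood of one maximum point $x_0$, is what produces the gap in values, since values close to $t$ may occur far from $x_0$. It then realizes only points of the form $h(\dots,\theta_{-1},\beta_1,d^m,\alpha,d^m,\beta_2,\theta_0,\dots)$. Here $\alpha$ is a fixed word around a point $z$ with $f(z)=t$, $d^m$ is a buffer made from a periodic word of $\tilde\Lambda^\eta$, and the tails $(\theta_n)$ are arbitrary in $\tilde\Lambda^\eta$. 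Along such an orbit, $f<t-\delta/2$ outside the central block, while $f>t-\delta/2$ at the position of $\alpha$. So, at the right iterate, these points do satisfy $f=m_{\varphi,f}$.

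\emph{Fixing the position of the maximum.} The words $\beta_1,\beta_2$ are chosen, via an auxiliary lemma applied to $\max_{|j|\le M}f\circ\varphi^j$, so that the maximum over the central window is attained at a single index $j_0$ independent of the tails. Only then is the set of realized values $f(\tilde A(\tilde\Lambda^\eta))$ for one local diffeomorphism $\tilde A$ preserving the stable and unstable directions. The $\tilde{\mathcal U}$ hypothesis applied to $f\circ\tilde A$ on $\tilde\Lambda^\eta$ then gives dimension $\min\{1,\dim_H(\tilde\Lambda^\eta)\}$. Without fixing $j_0$, the value map is a maximum of several $C^1$ functions, and the dimension formula does not apply directly.
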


For completeness, we will give the proof of \Cref{prop:dimension_of_subimages}. We need the following consequence of \cite[Proposition 1]{LMV}.

\begin{proposition}\label{prop:prop_1_gugu_villamil}
Let $\varphi\in\Diff_{\omega}^2(S)$ with a transitive horseshoe $\Lambda$. Fix $f\in \mathcal{R}_{\varphi,\Lambda}$ and $t\in \mathbb{R}$ such that $\dim_H(\Lambda_t)>0$. Then, for every $0<\eta<1$ there exists $\delta>0$ and a complete subshift 
$\Sigma(\mathcal{B}) \subset \Sigma \subset \mathcal{A}^{\mathbb{Z}}$, associated to a finite set $\mathcal{B}=\{\beta_1,\beta_2,\dots,\beta_m\}$ of finite words, such that
\begin{equation*}
    \Sigma(\mathcal{B}) \subset \Sigma_{t-\delta}
\end{equation*}
and 
\begin{equation*}
    \dim_H(\Lambda(\Sigma(\mathcal{B})))>(1-\eta)\dim_H(\Lambda_t)
\end{equation*}
where $\Lambda(\Sigma(\mathcal{B}))$ is the corresponding subhorseshoe given by the symbolic dynamic conjugation.
\end{proposition}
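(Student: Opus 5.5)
The statement to prove is \Cref{prop:prop_1_gugu_villamil}, and the plan is to follow the construction of \cite[Proposition 1]{LMV}. Symbolically, it approximates the dimension of $\Lambda_t$ by complete subshifts that stay strictly below the level $t$.

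First, fix $0<\eta<1$. Since $\dim_H(\Lambda_t)>0$, and $\Lambda_t$ is compact and invariant, we use the local product structure: $\dim_H(\Lambda_t)=\dim_H(K^s_t)+\dim_H(K^u_t)$ up to arbitrarily small errors. This is standard for conservative horseshoes, because stable and unstable dimensions coincide by the volume-preserving hypothesis $\varphi\in\Diff^2_\omega(S)$.

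Next, we pass to a level slightly below $t$. Consider $\Lambda_{t-\delta'}$ for small $\delta'>0$. The claim is that $\dim_H(\Lambda_{t-\delta'})\to\dim_H(\Lambda_t)$ as $\delta'\to 0$ from below. This upper semicontinuity from the left is the main obstacle. The approach I would try first is the one in \cite{LMV}. Since $f\in\mathcal{R}_{\varphi,\Lambda}$, the derivative of $f$ along both $e^u$ and $e^s$ is nonzero on $\Lambda$. Given a long admissible word $w$ appearing in $\Sigma_t$, one can modify its far-away coordinates (extend it on each side by carefully chosen periodic-type blocks) to push the value of $f$ on every shift strictly below $t$. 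This costs only a bounded number of letters relative to the length of $w$. Counting such words, using a covering of $\Lambda_t$ by cylinders of length $n$ at scale $\approx\lambda^{-n}$, shows that the number of length-$n$ words usable at level $t-\delta(n)$ is at least $e^{-o(n)}$ times the number of $\Sigma_t$ words. Here one needs a uniform bound on the number of ``dangerous'' positions where $f$ is close to $t$; the transversality condition guarantees that near the maximum, shifting coordinates deep in the past or future changes $f$ by a definite, monotone amount.

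Then we build the complete subshift. Fix $n$ large and let $\mathcal{B}$ consist of admissible words of length roughly $n$ obtained in the previous step, each concatenated with fixed connecting words. Transitivity and mixing of $\Sigma$ give connecting words of bounded length $c$, chosen so that arbitrary concatenations $\beta_{i_1}\beta_{i_2}\cdots$ are admissible. The padding is chosen so that, at every position of such a bi-infinite concatenation, the cylinder of length about $n/2$ around it is one already controlled. Hence $f(\varphi^k x)\le t-\delta$ for all $k$, which gives $\Sigma(\mathcal{B})\subset\Sigma_{t-\delta}$.

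Finally, we estimate the dimension. $\Lambda(\Sigma(\mathcal{B}))$ is a product of two Gauss-type regular Cantor sets whose branches are compositions of $n+c$ branches of $\varphi^{\pm1}$. Bounded distortion gives $\dim_H\ge \log\#\mathcal{B}/(\log\lambda_{\max,n}+O(c))$ in each direction. By the counting from the second step, $\log\#\mathcal{B}\ge (1-\eta/2)\,n\cdot\tfrac12\dim_H(\Lambda_t)\log\lambda$ for appropriate comparison with covers of $\Lambda_t$, so taking $n$ large yields $\dim_H(\Lambda(\Sigma(\mathcal{B})))>(1-\eta)\dim_H(\Lambda_t)$.
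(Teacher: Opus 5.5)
The paper gives no proof of this proposition. It imports it from \cite[Proposition 1]{LMV}, which constructs Bernoulli Cantor subsets of $K^u_t$ and $K^s_t$ approximating their dimensions. So LMV is the right reference point, but your reconstruction of it has a genuine gap in the step that carries all the content, your second paragraph.

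\textbf{The decreasing step.} Changing coordinates at distance $k$ from a position $j$ changes $f(\varphi^j x)$ by an amount of order $\lambda^{-k}$, and its \emph{sign depends on $j$}. For the Gauss map it alternates with the parity of $k$. In general it depends on the orientation of $D\varphi^{k}$ along $E^u$ or $E^s$, and on the signs of $Df(e^u)$, $Df(e^s)$ at $\varphi^j x$.
\begin{itemize}
\item No single choice of extension blocks lowers all near-critical positions at once, so the change is neither ``definite'' nor ``monotone'' in your sense.
\item Nothing bounds the number of positions where $f$ is close to $t$ along a long word of $\Sigma_t$; think of orbits whose Lagrange value is $t$.
\item The periodic-type blocks you append create new positions whose heights are not controlled at all.
\end{itemize}
The missing idea is the one behind Moreira's argument for the Gauss map, which \cite{LMV} adapts. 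The hypothesis $f\in\mathcal{R}_{\varphi,\Lambda}$ makes $f$ strictly monotone along short stable and unstable leaves. Suppose that, at each stage, you \emph{discard the two extremal continuations} of each word: the largest and smallest in the order of the unstable, respectively stable, Cantor set. Then every surviving tail lies strictly between two tails realized, with the same long past, by sequences in $\Sigma_t$. Monotonicity then gives $f\le t-\delta$ whatever the sign of the dependence at that position, because the pair of extremes is the same for increasing and decreasing dependence. Positive dimension of $\Lambda_t$ is what guarantees that, at a suitable scale, words typically have many continuations. Discarding two of them then changes the count only by a factor close to $1$, and this is where the $(1-\eta)$ comes from.

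\textbf{The gluing step.} Connecting words supplied by mixing of $\Sigma$ may pass through regions where $f>t$. Also $\Sigma_t$ need not be transitive, so nothing places the concatenations in $\Sigma_{t-\delta}$. There is a scale mismatch as well. A window of half-length $n/2$ determines $f$ only up to $O(\lambda^{-n/2})$, which swamps a decrease of order $\lambda^{-n}$ produced by modifying coordinates at distance $n$. One way to repair this is to pigeonhole on a common prefix and suffix, at a subexponential cost. Then the glued sequence agrees with sequences of $\Sigma_t$ on windows much longer than the depth at which the non-extremal choice is made.

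\textbf{Smaller points.} Your first step is circular. $\Lambda_t$ is not a horseshoe, and the identities $\dim_H\Lambda_t=\dim_H K^s_t+\dim_H K^u_t$ and $d_s=d_u$ are consequences of approximations of this kind (compare Theorem~\ref{thm:continuity}), not inputs. In your last step, the count should use words at a fixed geometric scale, not a fixed length with a single rate $\lambda$. For conservative $\varphi$, stable and unstable cylinder sizes are comparable, which is what makes such a count compare correctly with $\dim_H\Lambda_t$.
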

\begin{remark}
    In fact, \cite[Propositon 1]{LMV} is more general since it construct Bernoulli regular Cantor subsets of $K_u^t$ and $K_s^t$ respectively, which approximate their Hausdorff and box dimensions, without assuming the preservation of some area form.
\end{remark}

\begin{proof}[Proof of \Cref{prop:dimension_of_subimages}]

The inequalities \[\dim_H(\ell_{\varphi,f}(\tilde{\Lambda}))\leq \dim_H(m_{\varphi,f}(\tilde{\Lambda}))\leq \min\{1,\dim_H(\tilde{\Lambda})\}\] are immediate due to the fact that $L_{\varphi,f,\tilde{\Lambda}}\subset M_{\varphi,f,\tilde{\Lambda}}$.  

For the converse, given a subhorseshoe $\tilde{\Lambda}\subset \Lambda$, we use \Cref{prop:prop_1_gugu_villamil} for $\tilde{\Lambda}$ instead of $\Lambda$ and $t=\max_{x\in \tilde{\Lambda}}f(x)$, so we have that for every $\eta>0$, that there exists $\delta>0$ and a subhorseshoe $\tilde{\Lambda}^{\eta}\subset \tilde{\Lambda}$ such that $\dim_H(\tilde{\Lambda}^{\eta})>(1-\eta)\dim_H(\tilde{\Lambda})$ and $\max_{x\in \tilde{\Lambda}^{\eta}}f(x)\leq t-\delta$. By the compacity of $f^{-1}(t)\cap \tilde{\Lambda}$ and continuity of $Df$, we can choose $\delta$ in such a way that for $x\in \tilde{\Lambda}$ such that $f(x)>t-\delta$, $Df_x(e^{s,u}_x)\neq 0$. From now on we fix some $z\in \Lambda$ such that $f(z)=t$. Let $z=h(\cdots,a_{-n_0-1},a_{-n_0},\cdots,a_{-1};a_0,a_1,\cdots,a_{n_0},a_{n_0+1},\cdots)$ be the symbolic representation of $z$, where $n_0$ is chosen in a way such that for every point $x$ of the form $x=h(\cdots,\theta_{-n_0-2},\theta_{n_0-1},\alpha,\theta_{n_0+1},\theta_{n_0+2},\cdots), \alpha=(a_{-n_0-1},a_{-n_0},\cdots,a_{-1};a_0,a_1,\cdots,a_{n_0})$, we have that $f(x)>t-\frac{\delta}{2}$. Because of continuity of $Df$ and of the bundles $E^s,E^u$, choosing $\delta,$  we have that for every $x\in \tilde{\Lambda}$ with $f(x)>t-\delta$, $Df_x(e^{s,u}_x)\neq 0$. Considering now a periodic point $p\in \tilde{\Lambda}^{\eta}$ with symbolic representation of the form $p=h(\overline{d})=h(\cdots,d;d,d,\cdots$, for some $d=(d_0,\cdots,d_k)$, we have that, for some large enough $m_0$ and $m>m_0$, for every $q\in \tilde{\Lambda}^{\eta}, q=(\cdots,\theta_{-n},\theta_{-n+1},\cdots, \theta_{-1};\theta_0,\theta_1,\cdots,\theta_{n-1},\theta_n,\cdots)$, the Markov value of $A(q)=h(\cdots,\theta_{-2},\theta_{-1},d^m,\alpha,d^m,\theta_0,\theta_1,\cdots)$ will be realized in some position $j_0$ in the central block $(d^{m_0},\alpha,d^{m_0})$ and $f(\varphi^l(A(q)))<t-\delta/2$ in every position that does not correspond to this block. Notice that, because $\tilde{\Lambda}^{\eta}$ comes from a complete subshift, these bi-infinite sequences are allowed. Let $M$ be such that, if $|j|>M$, then the position $j$ is outside the block $(d^{m_0},\alpha,d^{m_0})$. Using the lemma for the function $F=\max_{|j|\leq M}f\circ\varphi^j$ and the space $\tilde{\Lambda}$, we get that there are words $\beta_1,\beta_2$ and some $|j_0|<M$ such that for any $y$ of the form $y=h(\cdots,\theta_{-2},\theta_{-1},\beta_1,d^{m},\alpha,d^m,\beta_2,\theta_0,\theta_1,\cdots)$,
    $$F(y)=f(\varphi^{j_0}(y))=f(h(\sigma^{j_0}
(\cdots,\theta_{-2},\theta_{-1},\beta_1,d^{m},\alpha,d^m,\beta_2,\theta_0,\theta_1,\cdots)))$$
for a bi-infinite sequence $(\theta_n)$ in the shift space of $\tilde{\Lambda}$ such that the transitions $\theta_{-1},\beta_1$ and $\beta_2,\theta_0$ are allowed. 

    Now, for a point $q\in \tilde{\Lambda}^{\eta}$, $q=h((\theta_n)_{n\in \mathbb{Z}})$, we define the diffeomorphism $\tilde{A}(q)=\varphi^{j_0}(h(\cdots,\theta_{-2},\theta_{-1},\beta_1,d^{m},\alpha,d^m,\beta_2,\theta_0,\theta_1,\cdots))$ defined on a neighborhood of $\tilde{\Lambda}^{\eta}$. This diffeo preserves the unstable and stable directions. We claim that $f(\tilde{A}(\tilde{\Lambda}^{\eta}))\subset L_{\varphi,f,\tilde{\Lambda}}$. Indeed, for any $(\theta_n)$ in the shift of $\tilde{\Lambda}$, if we define the blocks 
    $$\tau^{n}=(\theta_{-n},\theta_{-n+1},\cdots,\theta_{-1},\beta_1,d^m,\alpha,d^m,\beta_2,\theta_0,\theta_1,\cdots,\theta_{n}),$$
    and consider the point $q'=h(\cdots,\tau^{n},\cdots,\tau^1,\tau^0,\tau^1,\cdots,\tau^n,\cdots)$, by what we have just seen above, the maximum value of $f\circ \varphi^k(q')$ for the positions $k$ of some block $\tau^n$ will be attained in the position of the block corresponding to $j_0$. This will imply that $\ell_{f,\varphi}(q')=f(\tilde{A}(q))$. As $f(\tilde{A}(q))>t-\delta/2$, we have that $Df_{\tilde{A}(q)}(e^{s,u}_{\tilde{A}(q)})\neq 0$, so we have that the dimension formula $\dim_H(f(\tilde{A}(\tilde{\Lambda}^{\eta})))=\min\{1,\dim_H(\tilde{\Lambda}^{\eta})\}$ holds.

    This discussion implies that 
    \begin{align*}
        \dim_H(\ell_{\varphi,f}(\tilde{\Lambda}))\geq \dim_H(f(\tilde{A}(\tilde{\Lambda}^{\eta})))&=\min\{1,\dim_H(\tilde{\Lambda}^{\eta})\}\\
        &\geq \min\{1,(1-\eta)\dim_H(\tilde{\Lambda})\}.
    \end{align*}
    Making $\eta\to 0$, we have that $\dim_H(\ell_{\varphi,f}(\tilde{\Lambda}))\geq \min\{1,\dim_H(\tilde{\Lambda})\}$, which concludes the proof.
\end{proof}

Using the above tools, in \cite{SecondLagrangeSpectra} we proved the following theorem, generalizing the main result of \cite{LMV}.

\begin{theorem}\label{thm:continuity}
	Let $\varphi\in\tilde{\mathcal{U}}$ and and $f:\Lambda\to \mathbb{R}$ be a function of the form $f(x)=\max_{1\leq j\leq N}f_j(x)$ where each $f_j$ is a $C^r$ real valued function defined in a neighborhood of $\Lambda$ such that for every $x\in \Lambda$, it holds $Df_j(x)(e^{u}_x)\neq 0$ and $Df_j(x)(e^{s}_x)\neq 0$. Then the functions:
	$$t\mapsto d_u(t):= \dim_H(K^u_t) \ \mbox{and} \ t\mapsto d_s(t):=\dim_H(K^s_t)$$
	are continuous and in fact, they are equal. Moreover 
	\begin{equation*}
		\dim_H(K_t^u)=\overline{\dim_{B}}(K_t^u), \quad \dim_H(K_t^s)=\overline{\dim_{B}}(K_t^s),
	\end{equation*}
	and
	$$\dim_H(\Lambda_t)=d_u(t)+d_s(t)=2d_u(t)$$ and 
	$$\min\{1,\dim_H(\Lambda_t)\}=L(t).$$
\end{theorem}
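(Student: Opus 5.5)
The plan is to follow the strategy of \cite{LMV}, adapting each step to a height function $f=\max_{1\leq j\leq N}f_j$ that is only piecewise smooth. The statement has four parts, and I would prove them in this order: continuity of $d_u$ and $d_s$, their equality, equality of Hausdorff and upper box dimension, and finally the identity $\min\{1,\dim_H(\Lambda_t)\}=L(t)$, where $L(t)=\dim_H(L_{\varphi,f,\Lambda}\cap(-\infty,t))$.

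\emph{Upper bounds.} For fixed $t$ and small $\rho>0$, I would cover $K^u_{t+\rho}$ by unstable cylinders of a fixed large length $n$ that are admissible for $\Lambda_{t+\rho}$. Each $f_j$ is uniformly continuous, so for $n$ large these cylinders are, up to bounded distortion, the same cylinders that meet $\Lambda_t$ after shrinking by a fixed number of symbols at both ends. Counting cylinders then bounds $\limsup_{\rho\to 0}\overline{\dim_B}(K^u_{t+\rho})$ by a quantity that tends to $\overline{\dim_B}(K^u_t)$. This gives upper semicontinuity from the right, and the same argument applies to $K^s_t$.

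\emph{Lower bounds and $\dim_H=\overline{\dim_B}$.} I would apply \Cref{prop:prop_1_gugu_villamil}, which only needs continuity of $f$, so the maximum of finitely many $f_j$ causes no difficulty. This gives complete subshifts $\Sigma(\mathcal{B})\subset\Sigma_{t-\delta}$ whose subhorseshoes have dimension close to $\dim_H(\Lambda_t)$. The Bernoulli Cantor sets it produces approximate both the Hausdorff and the box dimension of $K^u_t$ and of $K^s_t$ from inside $K^u_{t-\delta}$ and $K^s_{t-\delta}$. Together with the previous step this yields $\dim_H(K^u_t)=\overline{\dim_B}(K^u_t)$ and continuity of $d_u$ (and likewise $d_s$).

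\emph{Equality $d_u=d_s$.} Each approximating subhorseshoe is a locally maximal hyperbolic set of an area-preserving diffeomorphism. For such sets the stable and unstable dimensions coincide, both being zeros of Bowen pressure equations that agree because the Jacobian is $1$. Since $\Lambda_t$ is sandwiched between these subhorseshoes, this gives $d_u(t)=d_s(t)$. Finally, $\dim_H(\Lambda_t)=d_u+d_s$ follows from the local product structure together with $\dim_H=\overline{\dim_B}$ on each factor.

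\emph{Identity for $L(t)$.} The inequality $L(t)\leq\min\{1,\dim_H(\Lambda_t)\}$ is immediate, because $f$ is locally Lipschitz on $\Lambda_t$ and every Lagrange value below $t$ is realized by a point of $\Lambda_{t'}$ with $t'<t$; continuity of $d_u$ then passes to the limit $t'\to t$. For the reverse inequality I would redo the proof of \Cref{prop:dimension_of_subimages} on the subhorseshoes $\Lambda(\Sigma(\mathcal{B}))$.

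The main obstacle is this last step. The dimension formula of \Cref{thm:gugu_dimension_formula} needs a single $C^1$ function with non-degenerate gradient, whereas $f$ is only a maximum of such functions. My first attempt would be to choose a point $z$ maximizing $f$ on the subhorseshoe, and an index $j^\ast$ with $f_{j^\ast}(z)=t$ that is lexicographically extremal among ties. I would then apply the word-selection lemma to $F=\max_{|k|\leq M}\max_j f_j\circ\varphi^k$, so that after inserting the words $\beta_1,\beta_2$ both the position $j_0$ and the index $j^\ast$ realizing the maximum are locally constant. On the set $\tilde A(\tilde\Lambda^\eta)$ the Lagrange value then equals $f_{j^\ast}\circ\tilde A$, which satisfies the transversality hypothesis because of the assumption on $Df_j(e^{s,u})$. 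If exact ties between different $f_j$ persist along a whole cylinder, I would instead perturb the choice of $z$ within the subhorseshoe, using the density of points where the maximum is attained by a unique index.
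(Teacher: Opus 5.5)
There is a genuine gap at the step that carries the theorem. You apply \Cref{prop:prop_1_gugu_villamil} to $f=\max_j f_j$ on the grounds that it ``only needs continuity of $f$''. That is false. If $f\equiv t_0$ near $\Lambda$, then $\dim_H(\Lambda_{t_0})=\dim_H(\Lambda)>0$, while $\Sigma_{t_0-\delta}=\varnothing$ for every $\delta>0$, so no subshift as in the proposition exists.

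The proposition is stated for $f\in\mathcal{R}_{\varphi,\Lambda}$, a single $C^1$ function with $Df(e^u)\neq 0$ and $Df(e^s)\neq 0$. This transversality is what yields left lower semicontinuity of $d_u$. The function $\max_j f_j$ is not differentiable where the $f_j$ tie, and it need not be monotone along stable or unstable leaves, since two $f_j$ may have derivatives of opposite signs along $e^u$. So the proposition does not apply as stated, and you would have to reprove it.

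Everything downstream depends on this step:
\begin{itemize}
\item continuity of $d_u$ and $d_s$;
\item $\dim_H=\overline{\dim_{B}}$ and $d_u=d_s$;
\item the lower bound $\dim_H(\Lambda_t)\geq 2d_u(t)$;
\item $L(t)\geq\min\{1,\dim_H(\Lambda_t)\}$, because the proof of \Cref{prop:dimension_of_subimages} invokes \Cref{prop:prop_1_gugu_villamil} again to obtain $\tilde{\Lambda}^{\eta}\subset\tilde{\Lambda}_{t-\delta}$.
\end{itemize}
By contrast, the obstacle you single out is the easy part. The word-selection lemma in that proof is already applied to $F=\max_{|j|\leq M}f\circ\varphi^j$, which is a maximum of finitely many transverse functions. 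Adding the index $j$ changes nothing. Persistent ties are harmless, because any tying index gives the same smooth function on the relevant set.

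The missing idea is the reduction behind the paper's statement, taken from \cite{SecondLagrangeSpectra} and written out in the proof of \Cref{thm:perfect_set}. Pass to the Rokhlin tower $\hat{S}=S\times\{1,\dots,N\}$ with $\hat{\varphi}(x,j)=(x,j+1)$ for $j<N$ and $\hat{\varphi}(x,N)=(\varphi(x),1)$, and set $\hat{f}(x,j)=f_j(x)$. Then:
\begin{itemize}
\item $\hat{\Lambda}=\Lambda\times\{1,\dots,N\}$ is a horseshoe of an area-preserving map with the same splitting;
\item its subhorseshoes are $\Lambda^\prime\times\{1,\dots,N\}$ with $\Lambda^\prime\subset\Lambda$ a subhorseshoe, so $\hat{\varphi}\in\tilde{\mathcal{U}}$;
\item $\hat{f}$ is a single $C^r$ function in $\mathcal{R}_{\hat{\varphi},\hat{\Lambda}}$;
\item $\hat{\Lambda}_t=\Lambda_t\times\{1,\dots,N\}$, so the new stable and unstable sets are $N$ copies of $K^s_t$ and $K^u_t$;
\item the Lagrange and Markov spectra coincide with the original ones.
\end{itemize}
The single-function result of \cite{LMV} then gives every assertion at once, with nothing to redo.

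If you keep your own route, two smaller points also need repair. First, $\Lambda_t$ is not locally maximal. The upper bound $\dim_H(\Lambda_t)\leq d_s(t)+\overline{\dim_{B}}(K^u_t)$ should come from $\Lambda_t\cap R_a$ lying in the product of its projections, and the lower bound from the subhorseshoes. Second, right upper semicontinuity comes from compactness, not from uniform continuity of the $f_j$: for fixed $n$, the words of length $n$ occurring in $\Sigma_{t+\rho}$ are exactly those of $\Sigma_t$ once $\rho$ is small.
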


Similarly, using the same construction of \cite{SecondLagrangeSpectra}, one can extend a result of S. Vieira \cite[Page 20]{SandoelThesis} for maximums of appropriate $C^1$ functions. Notice that the conservative hypothesis (i.e., the preservation of some area form $\omega$) is not necessary in the following result.

\begin{theorem}\label{thm:perfect_set}
	Let $\Lambda$ be a horseshoe of a $C^2$ diffeomorphism $\varphi$ in a surface $S$. Then the set 
    \begin{equation*}
        H_\Lambda = \{f\in C^1(S,\R):Df_z(e_z^u)\neq 0 \text{ or } Df_z(e_z^s)\neq 0, \forall z\in \Lambda\}
    \end{equation*}
    is an open and dense in $C^1(S,\R)$, such that for all $f=\max_{i\leq N}$, $f_i\in H_\Lambda$ we have $L_{\varphi,f,\Lambda}^{\prime\prime}=L_{\varphi,f,\Lambda}^{\prime}$ and
	\begin{equation*}
		\inf L_{\varphi,f,\Lambda}^{\prime} = \max\{t\in\R: \dim_H(L_{\varphi,f,\Lambda}\cap(-\infty,t))=0\}.
	\end{equation*}
\end{theorem}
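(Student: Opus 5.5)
The statement has three parts: the set $H_\Lambda$ is open and dense, $L'' = L'$, and the identity between $\inf L'$ and the last zero-dimension threshold. I would prove them in that order.

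\emph{Openness and density.} The condition defining $H_\Lambda$ is a pointwise non-vanishing condition over the compact set $\Lambda$. The bundles $e^u, e^s$ are continuous on $\Lambda$. Hence openness in $C^1$ is immediate: a small $C^1$ perturbation keeps $|Df_z(e^u_z)|+|Df_z(e^s_z)|$ bounded away from zero on $\Lambda$. For density I would argue locally. Given $f$, cover $\Lambda$ by finitely many small charts. In each chart, add a small linear function $\langle v,\cdot\rangle$, with $v$ chosen by a transversality/Sard-type argument so that $Df+v$ does not vanish on both $e^u$ and $e^s$ simultaneously. Since $E^u\oplus E^s$ spans $T_zS$, the condition fails only when $Df_z+v=0$. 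A generic $v$ avoids this, because $z\mapsto -Df_z$ has a $C^0$ image of a set of Hausdorff dimension less than $2$, which is fine for generic $v$ after a $C^2$ approximation of $f$. Patching with a partition of unity and a small parameter keeps the perturbation $C^1$-small.

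\emph{Perfectness of $L'$.} For $f=\max_i f_i$ with $f_i\in H_\Lambda$, take $\ell\in L'$. Following \cite{SecondLagrangeSpectra}, write $\ell=\lim \ell_{\varphi,f}(x_k)$ with distinct values. Each Lagrange value is approximated by $f$ evaluated at points of $\Lambda$ with periodic symbolic tails. I would then construct, near a point $z$ realizing the limsup, a one-parameter family of sequences. These glue a long central block $\alpha$ (fixing $z$'s neighbourhood) with varying far-away blocks $\theta$, exactly as in the proof of \Cref{prop:dimension_of_subimages}. Because $Df_{i_0}(e^u)$ or $Df_{i_0}(e^s)$ is nonzero at the maximizing index, changing the tail on the corresponding side moves the value $f_{i_0}(\varphi^{j_0}\cdot)$ strictly monotonically. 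This produces a Cantor-like (infinite) family of Lagrange values accumulating at each value that is itself near $\ell$. Hence every point of $L'$ near $\ell$ is approximated by accumulation points of $L$, giving $L'\subset L''$; the reverse inclusion is trivial. The main obstacle is ensuring that the maximum over $i$ is realized at the same index and position after perturbing tails, so that the nondegenerate derivative is the one governing the value. I would handle this, as in \cite{SecondLagrangeSpectra}, by choosing the central block long enough that all other positions stay below $t-\delta/2$, and by restricting to a subfamily on which the active index $i_0$ is constant.

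\emph{The infimum identity.} Let $t_0=\inf L'$. Below $t_0$, $L$ is discrete and countable, so $\dim_H(L\cap(-\infty,t))=0$ for $t\le t_0$. Conversely, for $t>t_0$ pick $\ell\in L'\cap(t_0,t)$. The construction above yields infinitely many points of $\Lambda_{t'}$ with $t'<t$, so $\Lambda_{t'}$ is not a finite union of periodic orbits. It therefore contains a nontrivial subhorseshoe, hence has positive dimension. Applying $f$ along the gluing map (one-sided dimension formula, $C^1$ image of a Cantor set with nonzero directional derivative) gives $\dim_H(L\cap(-\infty,t))>0$. So the maximum is attained at $t_0$.
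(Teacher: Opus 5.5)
\textbf{There is a genuine gap.} The step your infimum identity rests on is false. From ``$\Lambda_{t'}$ has infinitely many points, hence is not a finite union of periodic orbits'' you cannot conclude that $\Lambda_{t'}$ contains a nontrivial subhorseshoe or has positive dimension, because an infinite compact invariant subset of a horseshoe can have zero entropy. The classical spectrum shows this already for the sets $\Lambda_t$: for $T_1$ with height $1/x+y$, the set $\Lambda_3$ of sequences with Markov value at most $3$ is uncountable, yet it has zero topological entropy, hence $\dim_H\Lambda_3=0$. (The closure of a single homoclinic orbit is an even simpler infinite set containing no horseshoe.) The content of the identity is that the slack between $\ell\in L'$ and $t>\ell$ produces entropy, and your argument never uses that slack.

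One way to use it is as follows.
\begin{itemize}
\item Take $x_k$ with pairwise distinct Lagrange values $\ell_k\to\ell$. By uniform continuity of $f$ in symbolic coordinates, fix a radius $R$ (depending only on $\epsilon$) such that two sequences agreeing on $[-R,R]$ have $f$-values within $\epsilon/2$.
\item Let $G_k$ be the graph whose edges are the $(2R+1)$-words occurring in $\omega(x_k)$. The forward tail of $x_k$ is eventually a path in $G_k$ that uses every edge infinitely often, so $G_k$ is strongly connected.
\item If $G_k$ were a simple cycle, $\omega(x_k)$ would be one of the finitely many periodic orbits of period at most $|\mathcal{A}|^{2R}$. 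Since the $\ell_k$ are distinct, this happens for only finitely many $k$.
\item Hence for large $k$ the subshift of finite type $Y_k$ defined by $G_k$ is irreducible with positive entropy. It contains $\omega(x_k)$ and satisfies $\ell_k\le\max_{Y_k}f\le\ell_k+\epsilon/2$.
\end{itemize}

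Your perfectness argument lacks the same ingredient and is circular. The gluing in the proof of \Cref{prop:dimension_of_subimages} works only because \Cref{prop:prop_1_gugu_villamil}, under the hypothesis $\dim_H(\Lambda_t)>0$, supplies a reservoir subhorseshoe with $f\le t-\delta$. The far-away blocks $\theta$ are drawn from it, and that is what keeps the maximum at the central block. For an arbitrary $\ell\in L'$, in particular near $\inf L'$, you have produced no such reservoir, so the claimed Cantor family of Lagrange values near $\ell$ is not available. Part 3 then relies on ``the construction above'', so it inherits the problem.

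With the subhorseshoe $\Lambda(Y_k)$ from above, both parts go through. Apply \Cref{prop:prop_1_gugu_villamil} to it at $t=\max_{Y_k}f$ and glue as in \Cref{prop:dimension_of_subimages}. Then use the one-sided $C^1$ estimate along $W^u$ or $W^s$, whichever derivative is nonzero. This gives a positive-dimensional, hence uncountable, subset of $L\cap(\ell-\epsilon,\ell+\epsilon)$. Its condensation points give $\ell\in L''$, and the same set gives $\dim_H(L\cap(-\infty,t))>0$ for $t>\inf L'$.

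For comparison, the paper proves none of this directly. It passes to the Rokhlin tower $\hat\varphi$ on $S\times\{1,\dots,N\}$ with $\hat f(x,j)=f_j(x)$. This is a single function in $H_{\hat\Lambda}$ with $L_{\varphi,f,\Lambda}=L_{\hat\varphi,\hat f,\hat\Lambda}$, which also removes the active-index issue you raise. It then quotes Vieira's single-function theorem.
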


\begin{proof}
    Define $\hat{S}=S\times \{1,\cdots,N\}$, $\hat{\Lambda}=\Lambda\times \{1,\cdots,N\}$ and $\hat{\varphi}:\hat{S}\to \hat{S}$ by \[\hat{\varphi}(x,j)=\begin{cases}
    (x,j+1), \text{ if} & 1\leq j\leq N-1\\
    (\varphi(x),1), \text{ if} & j = N
    \end{cases}\]

    This is a Rokhlin tower extension of $\Lambda$. Then $\hat{\Lambda}$ is also a horseshoe for $\hat{\varphi}$ on the surface $\hat{S}=S\times \{1,\cdots,N\}$ with the same hyperbolic splitting as $\Lambda$. We then define the function $\hat{f}:\hat{\Lambda}\to \mathbb{R}$ by $\hat{f}(x,j)=f_j(x)$. It is easy to check that for $(x,j)\in \hat{\Lambda}$, we have that 
    $$\limsup
    _{n\to +\infty}\hat{f}(\hat{\varphi}^n(x,j))=\limsup
    _{k\in \mathbb{Z}}\max_{1\leq l\leq N}f_l(\varphi^k(x))=\limsup
    _{n\to +\infty}f(\varphi^n(x))$$ and $$\sup
    _{n\in \mathbb{Z}}\hat{f}(\hat{\varphi}^n(x,j))=\sup
    _{k\in \mathbb{Z}}\max_{1\leq l\leq N}f_l(\varphi^k(x))=\sup
    _{n\in \mathbb{Z}}f(\varphi^n(x)).$$
    
    This implies that $L_{\varphi,f,\Lambda}=L_{\hat{\varphi},\hat{f},\hat{\Lambda}}$ and $M_{\varphi,f,\Lambda}=M_{\hat{\varphi},\hat{f},\hat{\Lambda}}$ and therefore the theorem is corollary of the results of \cite[Page 20]{SandoelThesis}.
    
\end{proof}

\section{Dynamical characterization for $L_{\frac{1}{2}+i\R}$}\label{sec:dynamical_characterization}

Now we are going to give a dynamical description to $L_{\frac{1}{2}+i\R}$ similar to what is done in the classical Lagrange spectrum. The main difference is that the base space is no longer a bi-infinite shift, but rather a skew-product over a bi-infinite shift.

Recall that A. Schmidt defined
\begin{equation*}
    k(\xi):=\underset{p,q\in\Z\left[i\right]}{\underset{|p|,|q|\rightarrow \infty}{\limsup}}(|q||q\xi-p|)^{-1}.
\end{equation*}
Since for numbers $\xi$ with $\Re(\xi)=1/2$ and $k(\xi)>2$ all good approximations also have real part $1/2$ by \Cref{lem:fundamental_lemma}, it motivates to define a more restrictive approximation constant
\begin{equation*}
    k_{\frac{1}{2}}(\xi):=\limsup_{\substack{p,q\in\Z\left[i\right] \\|p|,|q|\rightarrow \infty \\ \Re(p/q)=1/2}}(|q||q\xi-p|)^{-1}.
\end{equation*}

\begin{corollary}\label{cor:simple_cor}
    If $\alpha\in \R\setminus\Q$, is such that $k\left(\frac{1}{2}+i\alpha\right)>2$, then
    \begin{equation*}
        k\left(\frac{1}{2}+i\alpha\right)=k_{\frac{1}{2}}\left(\frac{1}{2}+i\alpha\right).
    \end{equation*}
\end{corollary}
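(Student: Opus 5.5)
The plan is to compare the two limsups directly. Write $\xi=\frac12+i\alpha$. Since $k_{\frac12}(\xi)$ is a limsup over a subset of the pairs used to define $k(\xi)$, the inequality $k_{\frac12}(\xi)\le k(\xi)$ is immediate. So the whole task is the reverse inequality $k(\xi)\le k_{\frac12}(\xi)$, and here the hypothesis $k(\xi)>2$ will be used to discard every approximation with real part different from $\frac12$.

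For the reverse inequality, fix $c$ with $2<c<k(\xi)$. By definition of the limsup there are infinitely many pairs $(p,q)\in\Z[i]^2$, with $|p|,|q|\to\infty$, such that $(|q||q\xi-p|)^{-1}>c$. For each such pair,
\begin{equation*}
    \abs{\xi-\frac{p}{q}}<\frac{1}{c|q|^2}<\frac{1}{2|q|^2}.
\end{equation*}
\Cref{lem:fundamental_lemma} then forces $\Re(p/q)=\tfrac12$. Hence every one of these pairs is admissible in the definition of $k_{\frac12}(\xi)$. The sequence $|p|,|q|\to\infty$ and the values $(|q||q\xi-p|)^{-1}$ are unchanged, so $k_{\frac12}(\xi)\ge c$.

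Letting $c\uparrow k(\xi)$ gives $k_{\frac12}(\xi)\ge k(\xi)$, which completes the argument. If $k(\xi)=\infty$ the same reasoning applies with arbitrarily large $c$, so that $k_{\frac12}(\xi)=\infty$ as well.

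The only point needing care is that the strict inequality $c>2$ is exactly what lets \Cref{lem:fundamental_lemma} apply; this is why the statement requires $k(\xi)>2$. The limsup is taken along the same sequence of pairs in both definitions, so no other obstacle arises.
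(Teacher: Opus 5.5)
Your proposal is correct and follows the paper's own reasoning. The paper derives the corollary directly from the first part of \Cref{lem:fundamental_lemma}: once $k(\xi)>2$, every pair contributing values above $2$ to the limsup has $\Re(p/q)=\tfrac12$, so it is admissible for $k_{\frac12}(\xi)$. Your explicit treatment of the trivial inequality and of the case $k(\xi)=\infty$ simply spells out what the paper leaves implicit.
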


For complex numbers $\frac{1}{2}+i\alpha$, $\alpha\in\R\setminus\Q$, it is possible to give a formula for $k_{\frac{1}{2}}(\frac{1}{2}+i\alpha)$ (see Equation (6.22) in \cite[Page 83]{Sch}) based on $C$--regular expansions of $\alpha$.

\begin{remark}\label{rem:optimal_inequality}
The constant of \Cref{lem:fundamental_lemma} and \Cref{cor:simple_cor}
can not be improved. For example, if we chose $\alpha=\frac{1}{2}(\sqrt{2}-1)$, which is half the positive root of the quadratic form $h_1$ defined on \eqref{eq:forms}, then for any $\varepsilon>0$ there are infinitely many $p,q\in\Z[i]$ with $\Re(p/q)\neq 1/2$ but such that
\begin{equation}\label{eq:counterexample}
    \abs{\frac{1}{2}+i\alpha-\frac{p}{q}}<\frac{1}{(2-\varepsilon)|q|^2}.
\end{equation}
In fact, we can prove that for any $p,q\in\Z[i]$ it holds that
\begin{equation}\label{eq:lower_inequality}
    \frac{1}{2|q|^2}<\abs{\frac{1}{2}+i\alpha-\frac{p}{q}}.
\end{equation}
As consequence $k\left(\frac{1}{2}+i\alpha\right)=2$. It is also possible to show that 
\begin{equation*}
    k_{\frac{1}{2}}\left(\frac{1}{2}+i\alpha\right)=\sqrt{2},
\end{equation*}
based on the formula of Schmidt (see \cite[Example 4.2]{Sch} and \cite[Equation (6.22)]{Sch}).

Let us give the proof of \eqref{eq:lower_inequality} first. Notice that we can clearly assume that $\gcd(p,q)=1$. Because of \Cref{lem:fundamental_lemma}, we have that if $\abs{\frac{1}{2}+i\alpha-\frac{p}{q}}\leq\frac{1}{2|q|^2}$, then $\Re(p/q)=1/2$ and there are some constraints for $\Im(p/q)$. Let us denote by $(p_m/q_m)_{m\geq 0}$ the convergents of the continued fraction of $\alpha=\frac{1}{2}(\sqrt{2}-1)=[0;\overline{4,1}]=[0;a_1,a_2,\dots]$. Since $q_m\equiv 0\pmod{4}$ for $m$ odd and $q_m\equiv 1\pmod{4}$ for $m$ even, we have that the case $q_{m+1}\pm q_m \equiv 2\pmod{4}$ of \Cref{lem:fundamental_lemma} never occurs. Therefore, we can conclude that there is an $n\geq 0$ such that $\Im(p/q)=p_n/q_n$. Since for $n=0$ we have $p/q=0$ and $q=1$, we can assume that $n\geq 1$. Depending on the parity of $n$, the expression for $q$ will change according to \eqref{eq:denominator_cases}. If $n$ is odd, then $q=q_n$ so
\begin{align*}
    \abs{q}^{-2}\abs{\frac{1}{2}+i\alpha-\frac{p}{q}}^{-1}&=q_n^{-2}\abs{\alpha-\frac{p_n}{q_n}}^{-1} \\
    &=\alpha_{n+1}+\beta_{n+1} \\
    &=[1;\overline{4,1}]+[0;(4,1)^{(n-1)/2},4] \\
    &<[1;\overline{4,1}]+[0;4]=\frac{1}{2}(1+\sqrt{2})+\frac{1}{4}<2.
\end{align*}
If $n$ is even, then $q=2q_n$ so
\begin{align*}
    \abs{q}^{-2}\abs{\frac{1}{2}+i\alpha-\frac{p}{q}}^{-1}&=4q_n^{-2}\abs{\alpha-\frac{p_n}{q_n}}^{-1} \\
    &=\frac{\alpha_{n+1}+\beta_{n+1}}{4} \\
    &=\frac{[4;\overline{1,4}]+[0;(1,4)^{n/2}]}{4}<\frac{6}{4}=\frac{3}{2}<2.
\end{align*}

A sequence of Gaussian integers that are very good approximations for $\frac{1}{2}+i\alpha$, are the sequences  $p^{(n)},q^{(n)}\in\Z[i]$ defined for all $n\geq 0$ by
\begin{equation*}
    \begin{pmatrix}
        p^{(n)} \\
        q^{(n)}
    \end{pmatrix}=
    \begin{pmatrix}
        1 & -1+i \\
        2 & -1+2i
    \end{pmatrix}^n
    \begin{pmatrix}
        0 \\
        i
    \end{pmatrix}.
\end{equation*}
The motivation to define these approximations is because they are \emph{complex convergents} \cite[Page 15]{Sch} defined by Schmidt. Notice $\frac{1}{2}+i\alpha$ is a fixed point of the M\"{o}bius map
\begin{equation*}
    z\mapsto \frac{z+(-1+i)}{2z+(-1+2i)}.
\end{equation*} 
First, let us give a proof of \eqref{eq:counterexample} based on Schmidt's results. According to \cite[Theorem 2.7]{Sch}, if we denote
\begin{equation*}
    \begin{pmatrix}
        a & b \\
        c & d
    \end{pmatrix} =
    \begin{pmatrix}
        1 & -1+i \\
        2 & -1+2i
    \end{pmatrix}
\end{equation*}
we have that $k\left(\frac{1}{2}+i\alpha\right)=\frac{\sqrt{\abs{D}}}{\mu}$ where $D=(d-a)^2+4bc=-8$ is the discriminant of the quadratic form $F(X,Y)=cX^2+(d-a)XY-bY^2=2X^2+(-2+2i)XY+(1-i)Y^2$ and 
\begin{multline*}
    \mu=\min\{\abs{F(1,1-i)},\abs{F(0,i)},\abs{F(1,1)},\abs{F(1+2i,2)}, \\
    \abs{F(-1-i,-1)},\abs{F(i,i)}\}=\sqrt{2}.
\end{multline*}
In particular, we have $k\left(\frac{1}{2}+i\alpha\right)=\frac{\sqrt{8}}{\sqrt{2}}=2$. On the other hand, since $k_{\frac{1}{2}}\left(\frac{1}{2}+i\alpha\right)=\sqrt{2}<2$, the limsup is equal to $k\left(\frac{1}{2}+i\alpha\right)=\limsup_{|p|,|q|\to\infty}(|q||q\alpha-p|)^{-1}$ and will be attained at fractions $p/q$ with $\Re(p/q)\neq1/2$. This proves \eqref{eq:counterexample}.

Now, we will give an elementary proof of \eqref{eq:counterexample}. We claim that the following relations hold for all $n\geq0$,
\begin{equation}\label{eq:auxiliary_identities}
    4p_{2n}=q_{2n-1}, \quad p_{2n+1}=q_{2n}, \quad p_{2n}+q_{2n}=p_{2n+2}, \quad 4p_{2n+2}^2+1=q_{2n+2}q_{2n},
\end{equation}
\begin{equation}\label{eq:q_2n}
    q_{2n}=\begin{cases}
        q_n^2/4+q_{n-1}^2, & \text{if $n$ is odd}, \\
        q_{n-1}^2/4+q_{n}^2, & \text{if $n$ is even},
    \end{cases}
\end{equation}
\begin{equation}\label{eq:q_2n+1}
    q_{2n+1}=\begin{cases}
        q_n^2+2q_nq_{n-1}, & \text{if $n$ is odd}, \\
        4q_{n}^2+2q_{n}q_{n-1}, & \text{if $n$ is even},
    \end{cases}
\end{equation}

Recall that by definition $q_{0}=1, q_{-1}=0, q_{-2}=1$. For $n=0$, all the identities are just trivial computation. For the rest, we use induction: 
\begin{align*}
    4p_{2n+2}&=4(q_{2n}+p_{2n})=4q_{2n}+q_{2n-1}=q_{2n+1},  \\
    p_{2n+3}&=4p_{2n+2}+p_{2n+1}=q_{2n+1}+q_{2n}=q_{2n+2}, \\
    p_{2n+4}&=p_{2n+3}+p_{2n+2}=q_{2n+2}+p_{2n+2},
\end{align*}
and
\begin{align*}
    4p_{2n+4}^2+1&=4(p_{2n+2}+q_{2n+2})^2+1 \\
    &=(4p_{2n+2}^2+1)+2q_{2n+2}(4p_{2n+2}+2q_{2n+2}) \\
    &=q_{2n+2}q_{2n}+2q_{2n+2}(q_{2n+1}+2q_{2n+2}) \\
    &= q_{2n+2}(4q_{2n+2}+2q_{2n+1}+q_{2n}) \\
    &=q_{2n+2}(q_{2n+3}+q_{2n+2})=q_{2n+2}q_{2n+4}.
\end{align*}
Now we will prove \eqref{eq:q_2n} and \eqref{eq:q_2n+1} simultaneously. If $n$ is odd, then
\begin{align*}
    q_{2n+2}=q_{2n+1}+q_{2n}&=q_n^2+2q_nq_{n-1}+q_n^2/4+q_{n-1}^2 \\
    &=(q_{n}+q_{n-1})^2+q_n^2/4=q_n^2/4+q_{n+1}^2,
\end{align*}
while if $n$ is even
\begin{align*}
    q_{2n+2}=q_{2n+1}+q_{2n}&=4q_n^2+2q_nq_{n-1}+q_{n-1}^2/4+q_n^2 \\
    &=(4q_n+q_{n-1})^2/4+q_n^2=q_{n+1}^2/4+q_n^2.
\end{align*}
If $n$ is odd
\begin{align*}
    4q_{n+1}^2+2q_{n+1}q_{n}&=4(q_n+q_{n-1})^2+2(q_n+q_{n-1})q_{n}  \\
    &=5(q_{n}^2+2q_nq_{n-1})+4q_{n-1}^2+q_n^2 \\
    &=5q_{2n+1}+4q_{2n} = 4q_{2n+2}+q_{2n+1}=q_{2n+3},
\end{align*}
while if $n$ is even
\begin{align*}
    q_{n+1}^2+2q_{n+1}q_{n}&=(4q_n+q_{n-1})^2+2(4q_n+q_{n-1})q_{n} \\
    &=5(4q_{n}^2+2q_nq_{n-1})+q_{n-1}^2+4q_n^2 \\
    &=5q_{2n+1}+4q_{2n} = 4q_{2n+2}+q_{2n+1}=q_{2n+3}.
\end{align*}

We claim that for all $n\geq 0$
\begin{equation}\label{eq:p(n)}
    p^{(n)}=\begin{cases}
        i^{n+1}q_{n-1}(1+i), & \text{if $n$ odd}, \\
        i^{n+1}\frac{q_{n-1}}{2}(1+i), & \text{if $n$ even.}
    \end{cases}
\end{equation}
\begin{equation}\label{eq:q(n)}
    q^{(n)}=\begin{cases}
        (-1)^{(n+1)/2}(q_{n}/2+q_{n-1}i), & \text{if $n$ odd}, \\
        (-1)^{n/2}(-q_{n-1}/2+q_{n}i), & \text{if $n$ even.} 
    \end{cases}
\end{equation}

For $n=0$ is direct computation. Notice that we have the recurrence relations $p^{(n+1)}=p^{(n)}+(i-1)q^{(n)}$ and $q^{(n+1)}=2p^{(n)}+(2i-1)q^{(n)}$ for all $n\geq 0$. In particular if both \eqref{eq:p(n)} and \eqref{eq:q(n)} hold up to $n$, then if $n=2k+1$ is odd
\begin{align*}
    p^{(n+1)}&=p^{(2k+1)}+(i-1)q^{(2k+1)} \\
    &= (-1)^{k+1}q_{2k}(1+i)+(-1)^{k+1}(i-1)(q_{2k+1}/2+q_{2k}i) \\
    &=(-1)^{k+1}(q_{2k+1}/2)(-1+i)=i^{n+2}(1+i)(q_{n}/2),
\end{align*}
while if $n=2k$ is even
\begin{align*}
    p^{(n+1)}&=p^{(2k)}+(i-1)q^{(2k)} \\
    &= (-1)^{k}(q_{2k-1}/2)(-1+i)+(-1)^{k}(i-1)(-q_{2k-1}/2+q_{2k}i) \\
    &=(-1)^{k}q_{2k}(-1-i)=i^{n+2}(1+i)q_{n}.
\end{align*}
Similarly, if $n=2k+1$ is odd
\begin{align*}
    q^{(n+1)}&=2p^{(2k+1)}+(2i-1)q^{(2k+1)} \\
    &=(-1)^{k+1}(2q_{2k})(1+i)+(-1)^{k+1}(2i-1)(q_{2k+1}/2+q_{2k}i) \\
    &=(-1)^{k}(q_{2k+1}/2)+(-1)^{k+1}(q_{2k+1}+q_{2k})i \\
    &=(-1)^{k}(q_{2k+1}/2)+(-1)^{k+1}q_{2k+2}i,
\end{align*}
while if $n=2k$ is even
\begin{align*}
    q^{(n+1)}&=2p^{(2k)}+(2i-1)q^{(2k)} \\
    &=(-1)^{k}q_{2k-1}(-1+i)+(-1)^{k}(2i-1)(-q_{2k-1}/2+q_{2k}i) \\
    &=(-1)^{k}(-2q_{2k}-q_{2k-1}/2)+(-1)^{k+1}q_{2k}i \\
    &=(-1)^{k+1}(q_{2k+1}/2)+(-1)^{k+1}q_{2k}i.
\end{align*}

Now we claim that for all $n\geq 0$
\begin{equation}\label{eq:recurrent_convergents}
    \frac{p^{(n)}}{q^{(n)}}=\frac{1}{2}+\frac{(-1)^{n+1}}{2q_{2n}}+\frac{p_{2n}}{q_{2n}}i.
\end{equation}
For $n=0$ is just a trivial computation. Now we will do induction again and \eqref{eq:auxiliary_identities}:
\begin{align*}
    \frac{p^{(n+1)}}{q^{(n+1)}}&=\frac{p^{(n)}+(i-1)q^{(n)}}{2p^{(n)}+(2i-1)q^{(n)}}=\frac{1}{2}-\frac{1}{2(2p^{(n)}/q^{(n)}+(2i-1))} \\
    &=\frac{1}{2}-\frac{q_{2n}}{2((-1)^{n+1}+2(p_{2n}+q_{2n})i)}=\frac{1}{2}-\frac{q_{2n}}{2((-1)^{n+1}+2p_{2n+2}i)} \\
    &=\frac{1}{2}+\frac{(-1)^{n}q_{2n}+2p_{2n+2}q_{2n}}{2(4p_{2n+2}^2+1)} =\frac{1}{2}+\frac{(-1)^{n}q_{2n}+2p_{2n+2}q_{2n}}{2q_{2n+2}q_{2n}} \\
    &=\frac{1}{2}+\frac{(-1)^{n}}{2q_{2n+2}}+\frac{p_{2n+2}}{q_{2n+2}}.
\end{align*}

Finally, from \eqref{eq:q(n)} and \eqref{eq:q_2n} it follows that $\abs{q^{(n)}}=\sqrt{q_{2n}}$. Therefore
\begin{align*}
    \abs{q^{(n)}}^2\abs{\frac{1}{2}+i\alpha-\frac{p^{(n)}}{q^{(n)}}}&=q_{2n}\abs{\frac{(-1)^{n}}{2q_{2n}}+i\left(\alpha-\frac{p_{2n}}{q_{2n}}\right)} \\
    &=\sqrt{\frac{1}{4}+\frac{1}{(\alpha_{2n+1}+\beta_{2n+1})^2q_{2n}^2}}
\end{align*}
where $\alpha_{2n+1}=[4;\overline{1,4}]$ and $\beta_{2n+1}=[0;(1,4)^{n}]$. Since $q_{2n}\to\infty$ as $n\to\infty$, given $\varepsilon>0$, we have that for all sufficiently large $n$ that
\begin{equation*}
    \frac{1}{2}<\abs{q^{(n)}}^2\abs{\frac{1}{2}+i\alpha-\frac{p^{(n)}}{q^{(n)}}}<\frac{1}{2-\varepsilon}.
\end{equation*}

This finishes the proof of \eqref{eq:counterexample}. 

\end{remark}

\begin{lemma}\label{lem:formula_for_k}
Let $\alpha\in\R\setminus\Q$. Then if $k(\frac{1}{2}+i\alpha)>2$ we have
\begin{equation*}
    k\left(\frac{1}{2}+i\alpha\right)=\lim_{|p|,|q|\to\infty}\abs{q}^{-2}\abs{\frac{1}{2}+i\alpha-\frac{p}{q}}^{-1}=\limsup_{n\to\infty}\eta(n)
\end{equation*}
where
\begin{equation*}
    \eta(n):=\begin{cases}
        \alpha_{n+1}+\beta_{n+1}, & \text{if $q_n\equiv 0\pmod{4}$}, \\
        2(\alpha_{n+1}+\beta_{n+1}), & \text{if $q_n\equiv 2\pmod{4}$}, \\
        \frac{\alpha_{n+1}+\beta_{n+1}}{4}, & \text{if $q_n$ odd and $q_{n-1}$ even}, \\
        \max\left\{\frac{\alpha_{n+1}+\beta_{n+1}}{4},2\left(\frac{1}{1+\beta_{n+1}}+\frac{1}{\alpha_{n+1}-1}\right)\right\}, &\shortstack[l]{
        if $q_n,q_{n-1}$ are odd \\
        and $q_n\equiv q_{n-1}\pmod{4}$},  \\
        \max\left\{\frac{\alpha_{n+1}+\beta_{n+1}}{4},2\left(\frac{1}{1-\beta_{n+1}}-\frac{1}{\alpha_{n+1}+1}\right)\right\}, &\shortstack[l]{
        if $q_n,q_{n-1}$ are odd \\
        and $q_n\not\equiv q_{n-1}\pmod{4}$}, 
    \end{cases}
\end{equation*}
and $\alpha_{n+1}=[a_{n+1};a_{n+2},a_{n+3}]$ and $\beta_{n+1}=[0;a_{n},a_{n-1},\dots,a_1]$. Moreover for any $\alpha\in\R\setminus\Q$ we have the inequality
\begin{equation}\label{eq:inequality_k_less_than_2}
    k\left(\frac{1}{2}+i\alpha\right)=\lim_{|p|,|q|\to\infty}\abs{q}^{-2}\abs{\frac{1}{2}+i\alpha-\frac{p}{q}}^{-1}\geq\limsup_{n\to\infty}\eta(n)
\end{equation}
\end{lemma}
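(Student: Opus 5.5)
The plan is to compute, for each candidate approximation allowed by \Cref{lem:fundamental_lemma}, the exact value of $\abs{q}^{-2}\abs{\frac12+i\alpha-\frac pq}^{-1}$ in terms of $\alpha_{n+1},\beta_{n+1}$. Then we compare limsups.

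\textbf{Step 1: the basic reduction.} Write $\frac pq=\frac12+i\frac rs$ in lowest terms. By \eqref{eq:c_4(s)} we have $\abs{q}^2=4s^2/c_4(s)$ and $\abs{\frac12+i\alpha-\frac pq}=\abs{\alpha-\frac rs}$. Hence
\begin{equation*}
    \abs{q}^{-2}\abs{\tfrac12+i\alpha-\tfrac pq}^{-1}=\frac{c_4(s)}{4}\cdot\frac{1}{s^2\abs{\alpha-r/s}}.
\end{equation*}

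\textbf{Step 2: convergents.} For $r/s=p_n/q_n$, the exact error formula gives $1/(q_n^2\abs{\alpha-p_n/q_n})=\alpha_{n+1}+\beta_{n+1}$. Inserting $c_4(q_n)\in\{1,4,8\}$ yields the three values $\frac{\alpha_{n+1}+\beta_{n+1}}4$, $\alpha_{n+1}+\beta_{n+1}$ and $2(\alpha_{n+1}+\beta_{n+1})$, according as $q_n$ is odd, $\equiv0$ or $\equiv2\pmod 4$.

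\textbf{Step 3: intermediate fractions.} These occur only when the denominator is $\equiv2\pmod4$, so $c_4=8$ and the prefactor is $2$. I reindex the fractions of \Cref{lem:fundamental_lemma} so that they read $\frac{p_n\pm p_{n-1}}{q_n\pm q_{n-1}}$. The condition $q_n\pm q_{n-1}\equiv 2\pmod 4$ forces $q_n,q_{n-1}$ both odd. The ``$+$'' sign corresponds to $q_n\equiv q_{n-1}\pmod 4$ and the ``$-$'' sign to $q_n\not\equiv q_{n-1}\pmod 4$. This explains why the case ``$q_n$ odd, $q_{n-1}$ even'' has only the convergent term.

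Using $\alpha=\frac{p_n\alpha_{n+1}+p_{n-1}}{q_n\alpha_{n+1}+q_{n-1}}$ and $p_nq_{n-1}-p_{n-1}q_n=\pm1$, one gets
\begin{equation*}
    \abs{\alpha-\tfrac{p_n\pm p_{n-1}}{q_n\pm q_{n-1}}}=\frac{\abs{\alpha_{n+1}\mp1}}{(q_n\alpha_{n+1}+q_{n-1})(q_n\pm q_{n-1})}.
\end{equation*}
Dividing numerator and denominator by $q_n^2$ and using $\beta_{n+1}=q_{n-1}/q_n$ gives
\begin{equation*}
    \frac{1}{s^2\abs{\alpha-r/s}}=\frac{\alpha_{n+1}+\beta_{n+1}}{(1\pm\beta_{n+1})(\alpha_{n+1}\mp1)}.
\end{equation*}
This equals $\frac1{1+\beta_{n+1}}+\frac1{\alpha_{n+1}-1}$ in the ``$+$'' case and $\frac1{1-\beta_{n+1}}-\frac1{\alpha_{n+1}+1}$ in the ``$-$'' case. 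Multiplying by $2$ gives the second entries of the maxima. When $a_{n+1}=1$ the ``$+$'' intermediate fraction coincides with a convergent, and nothing breaks, since we only take a maximum.

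\textbf{Step 4: comparing the limsups.} Every $\eta(n)$ is the value of a genuine approximation $p/q$ with $\Re(p/q)=\frac12$, and $\abs{q}\to\infty$ as $n\to\infty$. Hence the limsup over all Gaussian fractions dominates $\limsup\eta(n)$, which proves \eqref{eq:inequality_k_less_than_2} with no hypothesis.

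Conversely, assume $k(\frac12+i\alpha)>2$. Any sequence of approximations realizing values tending to $k$ eventually has values $>2$, that is, $\abs{\frac12+i\alpha-\frac pq}<\frac1{2\abs q^2}$. By \Cref{lem:fundamental_lemma}, each such $p/q$ has real part $\frac12$ and is one of the three listed types, so its value is $\le\eta(n)$ for the corresponding $n$. Large $\abs{q}$ forces large $n$, because there are finitely many candidates of bounded denominator. This gives equality; it is also consistent with \Cref{cor:simple_cor}.

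The main obstacle is the index bookkeeping in Step 3. One must shift the pair $(p_{n+1}-p_n)/(q_{n+1}-q_n)$ to index $n$, then match the sign $\pm$ with the congruence of $q_n$ and $q_{n-1}$ modulo $4$. One must also check the sign of $\alpha_{n+1}\mp1$ and make sure no admissible fraction is omitted from the case list.
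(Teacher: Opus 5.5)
Your proposal is correct and follows essentially the same route as the paper. Both arguments use \Cref{lem:fundamental_lemma} to restrict to convergents and to the two intermediate fractions with denominator $\equiv 2 \pmod 4$. Both then compute the exact value of each candidate from the error formula and the reduced denominators in \eqref{eq:denominator_cases}, and compare limsups in both directions. Your packaging of the prefactors via $c_4(s)$ and the simultaneous $\pm$ computation (after reindexing $(p_{n+1}-p_n)/(q_{n+1}-q_n)$ to index $n$) is only a tidier presentation of the paper's case-by-case computation.
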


\begin{proof}
First, notice that the limsup will be attained at pairs $\gcd(p,q)=1$. Since by hypothesis $k(\frac{1}{2}+i\alpha)>2$, we can assume further that $\abs{\frac{1}{2}+i\alpha-\frac{p}{q}}<\frac{1}{2\abs{q}^2}$. By \Cref{lem:fundamental_lemma} we have that for those fractions $\frac{p}{q}=\frac{1}{2}+i\frac{r}{s}$ are of the form \eqref{eq:denominator_cases} for some $q_n\leq s\leq q_{n+1}$. Conversely, for each positive integer $n$, we can define $\frac{p}{q}=\frac{1}{2}+i\frac{r}{s}$ with $\gcd(r,s)=1$ in either of the following forms: $\frac{r}{s}=\frac{p_n}{q_n}$ (and no restrictions) or $\frac{r}{s}=\frac{p_{n+1}-p_n}{q_{n+1}-q_n}$ with $q_{n+1}-q_n\equiv 2\pmod{4}$ or $\frac{r}{s}=\frac{p_{n}+p_{n-1}}{q_{n}+q_{n-1}}$ with $q_n+q_{n-1}\equiv 2\pmod{4}$. In particular, the value of $k\left(\frac{1}{2}+i\alpha\right)$ will be attained precisely at the $p/q$ of this three families. Now we will consider each one of these cases. 

Recall that $\gcd(q_n,q_{n-1})=1$. If $s\equiv 0\pmod{4}$, then $\frac{p}{q}=\frac{p_n}{q_n}$ and because of \eqref{eq:denominator_cases}, $q=q_n$, so $q_n=s$ is even and $q_{n-1}$ is odd. In particular 
\begin{equation*}
    \abs{q}^{-2}\abs{\frac{1}{2}+i\alpha-\frac{p}{q}}^{-1}=\alpha_{n+1}+\beta_{n+1}.
\end{equation*}
If $s\equiv 1\pmod{2}$ then $\frac{r}{s}=\frac{p_n}{q_n}$ and because of \eqref{eq:denominator_cases}, $q=2q_n$, so $q_n=s$ is odd. In this case we will have
\begin{equation*}
    \abs{q}^{-2}\abs{\frac{1}{2}+i\alpha-\frac{p}{q}}^{-1}=\frac{\alpha_{n+1}+\beta_{n+1}}{4}.
\end{equation*}

If $s\equiv 2\pmod{4}$, then we have that either 
\begin{itemize}
    \item $\frac{r}{s}=\frac{p_n}{q_n}$ and so $q=q_n/(1+i)$,
    \item $\frac{r}{s}=\frac{p_{n+1}-p_n}{q_{n+1}-q_n}$ and so $q=(q_{n+1}-q_n)/(1+i)$,
    \item $\frac{r}{s}=\frac{p_{n}+p_{n-1}}{q_{n}+q_{n-1}}$ and so $q=(q_{n}+q_{n-1})/(1+i)$.
\end{itemize}

In the first case $q_n=s\equiv2\pmod{4}$ and 
\begin{equation*}
    \abs{q}^{-2}\abs{\frac{1}{2}+i\alpha-\frac{p}{q}}^{-1}=2(\alpha_{n+1}+\beta_{n+1}).
\end{equation*}

Since $q_n$ and $q_{n-1}$ can not be even simultaneously, we have that $q_{n+1}-q_n=s\equiv 2\pmod{4}$ and $q_n+q_{n-1}=s\equiv 2\pmod{4}$ forces both denominators to be odd. In the second case we must have $q_{n+1}\not\equiv q_n\pmod{4}$ while in the third $q_{n}\equiv q_{n-1}\pmod{4}$. 

Writing $\alpha=\frac{\alpha_{n+2}p_{n+1}+p_n}{\alpha_{n+2}q_{n+1}+q_n}$, we have in the second case,
\begin{align*}
    \abs{\alpha-\frac{r}{s}}&=\abs{\frac{\alpha_{n+2}p_{n+1}+p_n}{\alpha_{n+2}q_{n+1}+q_n}-\frac{p_{n+1}-p_n}{q_{n+1}-q_n}} \\
    &=\frac{\alpha_{n+2}+1}{(\alpha_{n+2}q_{n+1}+q_n)(q_{n+1}-q_n)}=\frac{(\alpha_{n+2}+1)(1-\beta_{n+2})}{(\alpha_{n+2}+\beta_{n+2})(q_{n+1}-q_n)^2}
\end{align*}
hence
\begin{equation*}
    \abs{q}^{-2}\abs{\frac{1}{2}+i\alpha-\frac{p}{q}}^{-1}=\frac{2(\alpha_{n+2}+\beta_{n+2})}{(\alpha_{n+2}+1)(1-\beta_{n+2})}=2\left(\frac{1}{1-\beta_{n+2}}-\frac{1}{\alpha_{n+2}+1}\right).
\end{equation*}
In the third case,
\begin{align*}
    \abs{\alpha-\frac{r}{s}}&=\abs{\frac{\alpha_{n+1}p_{n}+p_{n-1}}{\alpha_{n+1}q_{n}+q_{n-1}}-\frac{p_{n}+p_{n-1}}{q_{n}+q_{n-1}}} \\
    &=\frac{\alpha_{n+1}-1}{(\alpha_{n+1}q_{n}+q_{n-1})(q_{n}+q_{n-1})}=\frac{(\alpha_{n+1}-1)(1+\beta_{n+1})}{(\alpha_{n+1}+\beta_{n+1})(q_{n}+q_{n-1})^2}
\end{align*}
hence
\begin{equation*}
    \abs{q}^{-2}\abs{\frac{1}{2}+i\alpha-\frac{p}{q}}^{-1}=\frac{2(\alpha_{n+1}+\beta_{n+1})}{(\alpha_{n+1}-1)(1+\beta_{n+1})}=2\left(\frac{1}{1+\beta_{n+1}}+\frac{1}{\alpha_{n+1}-1}\right).
\end{equation*}

Conversely, the formula \eqref{eq:denominator_cases} shows that indeed each of the possibilities in the formula for $\eta(k)$ can appear as errors of approximations with numbers $p/q$ whose real part is $\frac{1}{2}$. In particular, even when $k\left(\frac{1}{2}+i\alpha\right)\leq 2$, the limit $\limsup_{k\to\infty}\eta(k)$ is still a lower bound for the best approximation constant $k\left(\frac{1}{2}+i\alpha\right)$, which proves \eqref{eq:inequality_k_less_than_2}.
\end{proof}

The previous lemma motivates a definition that captures both information 
\begin{multline*}
    (q_{n-1}\pmod{4}, q_n\pmod{4}) \\
    \in\{(i,j):0\leq i,j<4 \text{ and }i, j\text{ not both even}\}:=\mathcal{I}.
\end{multline*}
by using $q_n=a_nq_{n-1}+q_{n-2}$ recursively.

Let $\tilde{\Sigma}:=(\N^*)^{\Z}\times\mathcal{I}$, consider the homeomorphism $\varphi:\tilde{\Sigma}\to\tilde{\Sigma}$ given by 
\begin{equation*}
    \varphi((a_n)_{n\in\Z},(\varepsilon_0,\varepsilon_1)):=((a_{n+1})_{n\in\Z},(\varepsilon_1,a_2\varepsilon_1+\varepsilon_0\pmod{4})),
\end{equation*}

and height function $f:\tilde{\Sigma}\to\R$
\begin{equation*}
    f((a_n)_{n\in\Z},(\varepsilon_0,\varepsilon_1)):=\begin{cases}
        \alpha_{2}+\beta_{2}, & \text{if $\varepsilon_1=0$}, \\
        2(\alpha_{2}+\beta_{2}), & \text{if $\varepsilon_1=2$}, \\
        \frac{\alpha_{2}+\beta_{2}}{4}, & \shortstack[l]{if $\varepsilon_1$ odd \\ and $\varepsilon_0$ even}, \\
        \max\left\{\frac{\alpha_{2}+\beta_{2}}{4},2\left(\frac{1}{1+\beta_{2}}+\frac{1}{\alpha_{2}-1}\right)\right\}, &\shortstack[l]{if $\varepsilon_1,\varepsilon_0$ odd \\ and $\varepsilon_0=\varepsilon_1$},  \\
        \max\left\{\frac{\alpha_{2}+\beta_{2}}{4},2\left(\frac{1}{1-\beta_{2}}-\frac{1}{\alpha_{2}+1}\right)\right\}, &\shortstack[l]{if $\varepsilon_1,\varepsilon_0$ odd \\ and $\varepsilon_0\neq\varepsilon_1$},  \\
    \end{cases}
\end{equation*}
where $\alpha_{2}=[a_{2};a_{3},\dots]$ and $\beta_{2}=[0;a_1,a_{0},\dots]$. Using this data, we can define the dynamical Lagrange spectrum
\begin{equation*}
    L_{\varphi,f,\tilde{\Sigma}}:=\{\ell_{\varphi,f}(\tilde{x})=\limsup_{n\to\infty}f(\varphi^n(\tilde{x}))<\infty:\tilde{x}\in\tilde{\Sigma}\}.
\end{equation*}

\begin{lemma}\label{lem:model_lemma}

For any $t\geq 2$ we have 
\begin{equation}\label{eq:equality_after_t_2}
    L_{\frac{1}{2}+i\R}\cap[2,t]=L_{\varphi,f,\tilde{\Sigma}}\cap[2,t].
\end{equation}

Moreover, the set $L_{\varphi,f,\tilde{\Sigma}}\cap(-\infty,2)$ is countable. In particular for any $t\in\R$
\begin{equation}\label{eq:equal_dimensions}
    \dim_H\left(L_{\frac{1}{2}+i\R}\cap (-\infty,t)\right)=\dim_H\left(L_{\varphi,f,\tilde{\Sigma}}\cap (-\infty,t)\right).
\end{equation}
\end{lemma}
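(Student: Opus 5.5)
The plan is to identify, for each $\alpha=[a_0;a_1,a_2,\dots]$, a point $\tilde x_\alpha\in\tilde\Sigma$ whose forward orbit under $\varphi$ reproduces the sequence $\eta(n)$ of \Cref{lem:formula_for_k}. We take the bi-infinite sequence $(\dots,1,1,a_1,a_2,\dots)$, filling negative indices with an arbitrary tail, which only affects $\beta$ by terms tending to $0$. We take the initial state $(\varepsilon_0,\varepsilon_1)=(q_{-1},q_0)\bmod 4=(0,1)\in\mathcal I$. Since $q_n=a_nq_{n-1}+q_{n-2}$, an induction shows that the second factor of $\varphi^n(\tilde x_\alpha)$ is $(q_{n-1},q_n)\bmod 4$. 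The first factor is the shifted sequence, so $\alpha_2,\beta_2$ evaluated at $\varphi^n(\tilde x_\alpha)$ agree with $\alpha_{n+1},\beta_{n+1}$ up to an error that goes to $0$ as $n\to\infty$. The indices in the definitions of $\varphi$ and $f$ should be aligned so that this works exactly. The only change from the finite $\beta_{n+1}=[0;a_n,\dots,a_1]$ is the tail, and it contributes $O(2^{-n})$ by the exponential-closeness lemma. Because $f$ is continuous on the relevant region, this gives $\ell_{\varphi,f}(\tilde x_\alpha)=\limsup_n\eta(n)$.

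For the inclusion $\subseteq$ in \eqref{eq:equality_after_t_2}, let $k=k(\tfrac12+i\alpha)\in[2,t]$. If $k>2$, \Cref{lem:formula_for_k} gives $k=\limsup\eta(n)=\ell_{\varphi,f}(\tilde x_\alpha)$. The value $k=2$ must be handled separately. One option is \eqref{eq:inequality_k_less_than_2}, which gives $\limsup\eta\le 2$. A cleaner option is to show directly that $2\in L_{\varphi,f,\tilde\Sigma}$ by exhibiting a point whose $\limsup$ is exactly $2$, for instance via the limit roots from \Cref{thm:uncountably_many_for_c=2} or a periodic point with value $2$.

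For $\supseteq$, take $\tilde x=((a_n),(\varepsilon_0,\varepsilon_1))$ with $\ell(\tilde x)\in[2,t]$. We would like to realize it by some $\alpha=[0;a_1,a_2,\dots]$, but the residues $(\varepsilon_0,\varepsilon_1)$ need not equal $(q_{-1},q_0)\bmod 4$. To fix this, we prepend a finite word $w$ to $(a_n)_{n\ge1}$ chosen so that the residue pair reaching the start of the tail equals $(\varepsilon_0,\varepsilon_1)$. This is possible because the matrices $\begin{pmatrix}a&1\\1&0\end{pmatrix}$ act transitively on $\mathcal I$ (primitive vectors mod $4$), since $SL_2(\Z)\to SL_2(\Z/4)$ is onto and $GL_2$ handles the sign. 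A finite prefix does not change $\limsup$, because $\beta$ forgets the prefix exponentially fast. This gives $\limsup\eta(n)=\ell(\tilde x)\ge2$. If $\ell(\tilde x)>2$, then $k>2$ by \eqref{eq:inequality_k_less_than_2}, and \Cref{lem:formula_for_k} gives equality. If $\ell(\tilde x)=2$, then $k\ge2$. We need $k=2$, which requires ruling out $k>2$; but if $k>2$ the lemma would force $k=\limsup\eta=2$, a contradiction.

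For countability, if $\ell(\tilde x)<2$ then the corresponding $\alpha$ satisfies $k(\tfrac12+i\alpha)\le 2$. By \Cref{thm:schmidt}, $L_{\Q(i)}\cap(-\infty,2)$ is countable. The obstacle is that $\ell<2$ could occur with $k=2$, so the values $\ell<2$ are not automatically Schmidt values. I would instead show directly that $\ell(\tilde x)<2$ forces $a_n\le 4$ eventually, with strong restrictions from the residue cases, since $2(\alpha_2+\beta_2)$ and $\alpha_2+\beta_2$ must stay below $2$ whenever $q_n$ is even. I would then argue, following Schmidt's $C$-expansion analysis (equivalently $k_{\frac12}<2$), that only eventually periodic sequences survive, giving countably many values. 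This step is the main obstacle, and I would try to transport Schmidt's proof of \Cref{thm:schmidt} for $k_{\frac12}$. Finally, \eqref{eq:equal_dimensions} follows because the two sets differ only on $(-\infty,2)$, where both are countable, and countable sets have Hausdorff dimension zero.
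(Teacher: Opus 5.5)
Your treatment of the realization $\alpha\mapsto\tilde x_\alpha$ and of the equality on $[2,t]$ matches the paper's argument. The paper fixes the residue pair by explicit prefixes such as $[0;3,2,a_{N+2},\dots]$, while you use transitivity of the mod-$4$ action on $\mathcal I$; either works. You are also right that the endpoint $2$ needs separate care: the paper only treats values $>2$ and never checks that $2\in L_{\varphi,f,\tilde\Sigma}$. Your suggested witnesses (limit roots, a periodic point of value exactly $2$) are not justified, but a concrete witness is available. Take the residue pair $(1,0)$ and concatenate the blocks $((4,1)^{k_j},1,3,1)$ with $k_j\to\infty$. The residues mod $4$ repeat block by block. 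The case analysis in the lemma on $x_k$ bounds every height by $19/12$ except at two positions per block, and at those positions the heights tend to $2$ as the adjacent block lengths grow. Hence $\ell=2$.

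The genuine gap is the countability of $L_{\varphi,f,\tilde\Sigma}\cap(-\infty,2)$. You leave it as a programme, and its one concrete step is false. From $\ell(\tilde x)<2$ you only get, eventually:
\begin{itemize}
\item no $q_n\equiv 2\pmod 4$ occurs;
\item $a_{n+1}=1$ whenever $q_n\equiv 0\pmod 4$;
\item $\alpha_{n+1}+\beta_{n+1}<8$ when $q_n$ is odd, hence $a_n\le 7$.
\end{itemize}
Partial quotients above $4$ really do occur. For Ford's number, $\alpha=\frac{\sqrt3}{2}=[0;1,\overline{6,2}]$ has all $q_n$ odd and $\eta(n)\to\sqrt3$, so $\ell(\tilde x_\alpha)=\sqrt3<2$ with $a_n=6$ infinitely often.

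Your diagnosis of why the direct route fails is nevertheless correct, and it applies to the paper's own proof. There $\alpha(\tilde x)$ is constructed, the case $k(\frac12+i\alpha(\tilde x))>2$ is ruled out, and the paper then concludes $k<2$ so that \Cref{thm:schmidt} applies. Only $k\le 2$ follows. The $\overline{4,1}$ example given right after the lemma has $\ell(\tilde x)=\sqrt2$, while $\alpha(\tilde x)=\frac12(\sqrt2-1)$ has $k=2$ by \Cref{rem:optimal_inequality}. Since $k=2$ is attained by uncountably many equivalence classes, counting classes cannot give countability.

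What is missing, in your sketch and in the paper, is a Markov-type theorem for the values of $(\varphi,f,\tilde\Sigma)$ below $2$ themselves. One natural route:
\begin{enumerate}
\item By the bounds above and a diagonal argument, find $\tilde y$ in the $\omega$-limit set of $\tilde x$ with $f(\tilde y)=\ell(\tilde x)=\sup_{m\in\Z}f(\varphi^m\tilde y)$.
\item Using a bi-infinite analogue of \Cref{lem:fundamental_lemma} to control the fractions not recorded by $f$, show that the suitably normalized real form $F$ attached to $\tilde y$ satisfies $\abs{F(2r,s)}\ge c_4(s)$ for all coprime $(r,s)$ and $\Disc F<16$. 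This is equivalent to the hypotheses of the characterization lemma in \Cref{sec:binary_forms} (Schmidt's Lemma 5.12).
\item That lemma forces $F\in\{g_\Lambda,h_M\}$. It therefore places $\ell(\tilde x)$ among the countably many numbers $\sqrt{\Disc F}/2$, for instance $\sqrt2$ for $h_1$ and $\sqrt3$ for $g_1$.
\end{enumerate}
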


\begin{proof}
Given $\gamma=[0;b_1,b_2,\dots]\in(0,1)\cap(\R\setminus\Q)$ with $k\left(\frac{1}{2}+i\gamma\right)>2$, defining the point 
\[\tilde{y}=((\dots,1,1,1,b_1,b_2,\dots),(1,b_1\pmod{4}))\in\tilde{\Sigma},\]
we will have by \Cref{lem:formula_for_k} that
\begin{equation*}
    k\left(\frac{1}{2}+i\gamma\right)=\limsup_{n\to\infty}\eta(n)=\ell(\tilde{y})\in L_{\varphi,f,\tilde{\Sigma}}.
\end{equation*}
Conversely, given $\tilde{x}=((a_n)_{n\in\Z},(\varepsilon_0,\varepsilon_1))\in\tilde{\Sigma}$, define inductively $\varepsilon_{n+1}\equiv a_{n+1}\varepsilon_n+\varepsilon_{n-1} \pmod{4}$ with $0\leq\varepsilon_n<4$ for $n\geq 1$. Let $N\geq 0$ be minimum such that $\varepsilon_N=1$ if such an $N$ exists, otherwise let $N$ be minimum such that $\varepsilon_N=3$. Define $\alpha=\alpha(\tilde{x})\in\R\setminus\Q$ by
\begin{equation*}
    \alpha=
    \begin{cases}
        [0;\varepsilon_{N+1},a_{N+2},\dots], & \text{if } \varepsilon_N=1, \varepsilon_{N+1}>0, \\
        [0;a_{N+3},a_{N+4},\dots], & \text{if } \varepsilon_N=1, \varepsilon_{N+1}=0, \\
        [0;3,2,a_{N+2},\dots], & \text{if }\varepsilon_N=3, \varepsilon_{N+1}=3, \\
        [0;3,3,a_{N+2},\dots], & \text{if }\varepsilon_N=3, \varepsilon_{N+1}=2, \\
        [0;3,1,a_{N+2},\dots], & \text{if }\varepsilon_N=3, \varepsilon_{N+1}=0.
    \end{cases}
\end{equation*}
Then for all $n\geq 1$
\begin{equation*}
    q_{n}\equiv \begin{cases}
        \varepsilon_{N+n}\pmod{4}, & \text{if $\varepsilon_N=1, \varepsilon_{N+1}>0$}, \\
        \varepsilon_{N+n+2}\pmod{4}, & \text{if $\varepsilon_N=1, \varepsilon_{N+1}=0$}, \\
        \varepsilon_{N+n-1}\pmod{4}, & \text{if $\varepsilon_N=3$},
    \end{cases}
\end{equation*}
where $p_n/q_n$ is the respective convergent of the $\alpha$ defined above. Therefore, by \eqref{eq:inequality_k_less_than_2}, we will have that
\begin{equation}\label{eq:10}
    k\left(\frac{1}{2}+i\alpha\right)\geq\limsup_{n\to\infty}\eta(n)=\ell(\tilde{x}).
\end{equation}
with equality if $k\left(\frac{1}{2}+i\alpha\right)>2$. In particular, for those $\tilde{x}\in\tilde{\Sigma}$ such that $\ell(\tilde{x})>2$, we will have that $\ell(\tilde{x})=k\left(\frac{1}{2}+i\alpha\right)\in L_{\frac{1}{2}+i\R}$. This finishes the proof of \eqref{eq:equality_after_t_2}.

Because of Schmidt's theorem (\Cref{thm:schmidt}), the set \[L_{\frac{1}{2}+i\R}\cap(-\infty,2)\subset L_{\Q(i)}\cap(-\infty,2),\] is countable. Moreover, also by Schmidt's theorem, we know that $\xi\in\C(i)\setminus\Q(i)$ satisfies $k(\xi)<2$, $k(\xi)\neq\sqrt{\frac{3}{5}\sqrt{41}}$ if and only if $\xi$ is equivalent to some $\theta_\Lambda$, that is, there are $a,b,c,d\in\Z[i]$ with $\abs{ad-bc}=1$ such that $\xi=(a\theta_\lambda+b)/(c\theta_\Lambda+d)$; similarly the same happens when $k(\xi)=\sqrt{\frac{3}{5}\sqrt{41}}$. In particular, $\xi$ belongs to an equivalence class that is countable. Since such equivalence classes are indexed by $\Lambda\in\{1, 5, 29,\dots\}$ except for the four equivalence classes corresponding to $k(\xi)=\sqrt{\frac{3}{5}\sqrt{41}}$, we conclude that there are at most countably many $\xi\in\C(i)\setminus\Q(i)$ with $k(\xi)<2$.  .

Given any $\ell(\tilde{x})=t\in L_{\varphi,f,\tilde{\Sigma}}\cap(-\infty,2)$, by the above discussion we can build $\alpha=\alpha(\tilde{x})\in\R\setminus\Q$ that satisfies \eqref{eq:10}. If $k\left(\frac{1}{2}+i\alpha\right)>2$, then $k\left(\frac{1}{2}+i\alpha\right)=\ell(\tilde{x})=t>2$, a contradiction. In particular we will have that $k\left(\frac{1}{2}+i\alpha\right)<2$ so such an $\alpha$ belongs to a countable set of equivalence classes. Since the set of $\tilde{x}\in\tilde{\Sigma}$ with the same corresponding $\alpha(\tilde{x})$ is also countable, we conclude that $L_{\varphi,f,\tilde{\Sigma}}\cap(-\infty,2)$ is countable.

In conclusion, since $L_{\frac{1}{2}+i\R}\cap(-\infty,2)$ is countable and $L_{\varphi,f,\tilde{\Sigma}}\cap(-\infty,2)$ is also countable, we have $\dim_H(L_{\frac{1}{2}+i\R}\cap (-\infty,t))=\dim_H(L_{\varphi,f,\tilde{\Sigma}}\cap(-\infty,t))=0$ for $t\leq 2$ and for $t>2$, the equality follows from \eqref{eq:equality_after_t_2}. 
\end{proof}

On the other hand, the set $L_{\varphi,f,\tilde{\Sigma}}\cap(-\infty,2)$ is different from $L_{\frac{1}{2}+i\R}\cap(-\infty,2)$. For example, the minimum of $L_{\frac{1}{2}+i\R}$ is $\sqrt{3}$ (because of \Cref{thm:schmidt}) but $\sqrt{2}\in L_{\varphi,f,\tilde{\Sigma}}\cap(-\infty,2)$. Indeed, if we consider the point $\tilde{x}=((a_n)_{n\in\Z},(1,0))$ where $a_{2k+1}=4$ and $a_{2k}=1$ for all $k\in\Z$, then for $m\geq 0$ we have $\varphi^{2m}(\tilde{x})=((a_n)_{n\in\Z},(1,0))$, so  
\begin{equation*}
    f(\varphi^{2m}(\tilde{x}))=[1;\overline{4,1}]+[0;\overline{4,1}]=1+2\left(\frac{1}{2}(\sqrt{2}-1)\right)=\sqrt{2}
\end{equation*}
and $\varphi^{2m+1}(\tilde{x})=((a_{n+1})_{n\in\Z},(0,1))$, so  
\begin{equation*}
    f(\varphi^{2m}(\tilde{x}))=\frac{[4;\overline{1,4}]+[0;\overline{1,4}]}{4}=\frac{4+2(2\sqrt{2}-2)}{4}=\sqrt{2}.
\end{equation*}

\subsection{An alternative formula for $k_{\frac{1}{2}}$}

    \begin{lemma}
        Let $\alpha\in\R\setminus\Q$. Then if $k_{\frac{1}{2}}\left(\frac{1}{2}+i\alpha\right)>2$, we have that $k_{\frac{1}{2}}\left(\frac{1}{2}+i\alpha\right)=\limsup_{n\to\infty} \kappa(n)$, where
        \begin{equation*}
         \kappa(n)=\begin{cases}
            \alpha_{n+1}+\beta_{n+1}, & \text{ if } p_n+q_n\equiv 0 \pmod{2} \text{ and } q_n\equiv 1\pmod{4}, \\
            \frac{\alpha_{n+1}+\beta_{n+1}}{2}, & \text{ else},
        \end{cases}   
        \end{equation*}
        and where $\frac{p_n}{q_n}$ are the convergents of $2\alpha$.
    \end{lemma}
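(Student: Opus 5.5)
The plan is to reparametrize the admissible approximations $p/q$ with $\Re(p/q)=1/2$ by fractions $P/Q$ approximating $2\alpha$, instead of $\alpha$ as in \eqref{eq:denominator_cases}. Any such $p/q$ can be written as
\begin{equation*}
    \frac{p}{q}=\frac{1}{2}+\frac{i}{2}\cdot\frac{P}{Q},\qquad \gcd(P,Q)=1,\ Q>0,
\end{equation*}
so that $p/q=(Q+iP)/(2Q)$. For such $p/q$ we have
\begin{equation*}
    \abs{\frac{1}{2}+i\alpha-\frac{p}{q}}=\frac{1}{2}\abs{2\alpha-\frac{P}{Q}}.
\end{equation*}

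The first step is to compute $\abs{q}^2$ for the reduced form, following the gcd argument used after \eqref{eq:denominator_cases}. Since $\gcd(P,Q)=1$ over $\Z$, it also holds over $\Z[i]$. Hence $\gcd(Q+iP,2Q)=\gcd(Q+iP,2)$, and this is controlled by the norm $P^2+Q^2$.
\begin{itemize}
    \item If $P+Q$ is odd, the norm is odd, the gcd is $1$, and $\abs{q}^2=4Q^2$.
    \item If $P+Q$ is even, then $P,Q$ are both odd and $P^2+Q^2\equiv 2\pmod 4$. So $(1+i)$ divides $Q+iP$ exactly once, $q=2Q/(1+i)$, and $\abs{q}^2=2Q^2$.
\end{itemize}
Consequently
\begin{equation*}
    \abs{q}^{-2}\abs{\frac{1}{2}+i\alpha-\frac{p}{q}}^{-1}=
    \begin{cases}
        \bigl(2Q^2\abs{2\alpha-P/Q}\bigr)^{-1}, & P+Q \text{ odd},\\
        \bigl(Q^2\abs{2\alpha-P/Q}\bigr)^{-1}, & P+Q \text{ even}.
    \end{cases}
\end{equation*}
The condition in the statement, $p_n+q_n\equiv 0\pmod 2$, forces $q_n$ odd. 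I will read it as exactly the second case, checking that the mod $4$ refinement is the bookkeeping of this same parity split and does not exclude any further pairs.

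The second step is to localize to convergents. Since $k_{\frac{1}{2}}>2$, the limsup is governed by pairs whose value exceeds $2$. In both cases above this forces $Q^2\abs{2\alpha-P/Q}<1/2$. By Legendre's theorem, $P/Q$ is then a convergent $p_n/q_n$ of $2\alpha$.

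Conversely, each convergent $p_n/q_n$ yields an admissible $p/q$ with $\Re(p/q)=1/2$. By the exact error formula $\abs{2\alpha-p_n/q_n}=1/((\alpha_{n+1}+\beta_{n+1})q_n^2)$, now with $\alpha_{n+1},\beta_{n+1}$ taken from the expansion of $2\alpha$, its value is precisely $\kappa(n)$. This gives $\alpha_{n+1}+\beta_{n+1}$ when $p_n+q_n$ is even and $(\alpha_{n+1}+\beta_{n+1})/2$ otherwise.

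Finally, I will match the two limsups. As $n\to\infty$ we have $q_n\to\infty$, hence $\abs{q}\to\infty$ and $\abs{p}\to\infty$. So $\limsup\kappa(n)\le k_{\frac12}$. Conversely, every sequence with $\abs{p},\abs{q}\to\infty$ and values eventually $>2$ comes from convergents with $n\to\infty$. Values $\le 2$ cannot affect a limsup that is $>2$, so equality follows.

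The main obstacle I expect is the Gaussian gcd computation together with the reconciliation of the stated mod $4$ condition with the parity dichotomy. Everything else is a direct transcription of the argument in \Cref{lem:formula_for_k}, and is simpler because only Legendre's theorem, not \Cref{lem:legendre_generalized}, is needed.
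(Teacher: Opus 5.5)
Your computation is the paper's argument in slightly more economical coordinates. The paper writes $p/q=\frac12+i\frac rs$, splits into $s$ odd, $s\equiv 0$ and $s\equiv 2\pmod 4$, and then rewrites each case through a convergent of $2\alpha$ ($2r/s$ or $r/(s/2)$). Parametrizing directly by $P/Q\approx 2\alpha$ merges the first two cases into ``$P+Q$ odd'' and turns the third into ``$P+Q$ even''. The Gaussian gcd computation and the appeal to Legendre's theorem are the same. Your values are correct: $\alpha_{n+1}+\beta_{n+1}$ when $p_n+q_n$ is even, and half of that otherwise. Your limsup matching is also correct, and more explicit than in the paper.

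The step you defer, however, cannot be carried out. The refinement $q_n\equiv 1\pmod 4$ is not bookkeeping: it genuinely excludes the convergents with $p_n,q_n$ odd and $q_n\equiv 3\pmod 4$. For those, your own computation, which never uses $q_n \bmod 4$, gives $\alpha_{n+1}+\beta_{n+1}$, not $\kappa(n)=\frac12(\alpha_{n+1}+\beta_{n+1})$. So the lemma as literally stated is false.

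For a concrete counterexample, take $2\alpha=[0;\overline{3,N,3,4,3,4,3,4}]$ with $N\equiv 0\pmod 4$ and $N\geq 16$. Modulo $4$, each consecutive pair $(a,b)$ with $b\equiv 0$ contributes $\left(\begin{smallmatrix}1&a\\0&1\end{smallmatrix}\right)$. Hence the period matrix is $\equiv I$, and for every $n\equiv 1\pmod 8$ one has $(p_n,q_n)\equiv(1,3)\pmod 4$ and $a_{n+1}=N$. At those indices the true value exceeds $N$, so $k_{\frac12}(\frac12+i\alpha)\geq N>2$. Meanwhile $\kappa(n)<N/2+1$ there, and $\kappa(n)<6$ at all other indices, so $\limsup\kappa(n)<N/2+1<N$.

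The paper's proof contains the same slip: it asserts ``$q_n=s_0\equiv 1\pmod 4$'' when only ``$s_0$ odd'' is known. So rather than trying to reconcile the two, state what your argument actually proves. The condition should read $q_n\equiv 1\pmod 2$, which is automatic once $p_n+q_n$ is even, since $\gcd(p_n,q_n)=1$. In other words, $\kappa(n)=\alpha_{n+1}+\beta_{n+1}$ exactly when $p_n$ and $q_n$ are both odd.
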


    \begin{proof}
    Let $\alpha\in\R\setminus\Q$ and let $p,q\in\Z[i]$ be coprime with $\Re(\frac{p}{q})=\frac{1}{2}$. Write $\frac pq=\frac12+i\frac rs$, where $\gcd(r,s)=1$, and assume that \[\left|\frac12+i\alpha-\frac pq\right|<\frac{1}{2|q|^2}.\]
    
    We distinguish three cases according to the residue class of $s$ modulo $4$. Suppose first that $s\equiv2\pmod4$. Writing $s=2s_0$, we have $s_0$ odd and $r$ odd. Since $|s_0+ir|^2=s_0^2+r^2\equiv2\pmod4$, the Gaussian integer $s_0+ir$ is divisible by $1-i$, but not by $2$. Thus
    \[
    p=\frac{s_0+ir}{1-i},\qquad q=s_0(1+i),
    \]
    is the irreducible representation of $\frac12+i\frac{r}{2s_0}$. Consequently,
    \[
    |q|^2\left|\frac12+i\alpha-\frac pq\right|
    =2s_0^2\left|\alpha-\frac{r}{2s_0}\right|
    =|s_0|\,|(2\alpha)s_0-r|
    <\frac12.
    \]
    By Legendre's theorem, $r/s_0=p_n/q_n$ is a convergent of $2\alpha$. Since $p_n+q_n$ is even and $q_n=s_0\equiv1\pmod4$, we obtain
    \[
    |q|^{-2}\left|\frac12+i\alpha-\frac pq\right|^{-1}
    =\alpha_{n+1}+\beta_{n+1}.
    \]
    
    Next suppose that $s\equiv0\pmod4$. Then $s=2s_0$ with $s_0$ even, while $r$ is odd. Since $|s_0+ir|^2=s_0^2+r^2\equiv1\pmod4,$ the fraction is already in lowest terms: $p=s_0+ir,\qquad q=2s_0$. Hence
    \[
    |q|^2\left|\frac12+i\alpha-\frac pq\right|
    =4s_0^2\left|\alpha-\frac{r}{2s_0}\right|
    =2|s_0|\,|s_0(2\alpha)-r|
    <\frac12.
    \]
    Again Legendre's theorem gives $r/s_0=p_n/q_n$, and now $p_n+q_n$ is odd. Therefore,
    \[
    |q|^{-2}\left|\frac12+i\alpha-\frac pq\right|^{-1}
    =\frac{\alpha_{n+1}+\beta_{n+1}}{2}.
    \]
    
    Finally, suppose that $s$ is odd. Writing $\frac rs=\frac{2r}{2s}=\frac{r_0}{2s}$, where $r_0=2r$, we have
    \[
    \frac12+i\frac rs=\frac{s+ir_0}{2s}.
    \]
    Since $|s+ir_0|^2=s^2+r_0^2\equiv1\pmod4$, the Gaussian integers $p=s+ir_0$ and $q=2s$ are coprime. Thus
    \[
    |q|^2\left|\frac12+i\alpha-\frac pq\right|
    =2|s|\,|s(2\alpha)-r|
    <\frac12.
    \]
    Legendre's theorem again shows that $r/s=p_n/q_n$ is a convergent of $2\alpha$, and therefore
    \[
    |q|^{-2}\left|\frac12+i\alpha-\frac pq\right|^{-1}
    =\frac{\alpha_{n+1}+\beta_{n+1}}{2}.
    \]
    
    This proves the claimed formula for $k_{\frac12}\!\left(\frac12+i\alpha\right)$.
    \end{proof}

\section{Reduction of $L_{\frac{1}{2}+i\R}$ to the dynamical Lagrange spectrum associated to a horseshoe and continuity of dimension}\label{sec:fractal_properties}

To prove the results about $L_{\frac{1}{2}+i\R}$, we are going to construct a dynamical representation of $L_{\frac{1}{2}+i\R}$ in a smooth setting, similar to what is done in the case of the classical spectrum. With this, we are going to reduce the study of the interesting parts of $L_{\frac{1}{2}+i\R}$ to a dynamical Lagrange spectrum associated to a horseshoe.

\subsection{The map $\tilde{T}_1$}\label{subsec:the_map_T_1}
Consider the smooth map $\tilde{T}_1:(0,1)^2\times\mathcal{I}\to[0,1)^2\times\mathcal{I}$ given by
\begin{multline*}
    \tilde{T}_1\left((x,y),(\varepsilon_0,\varepsilon_1)\right)=\left(\left(\left\{\frac1{x}\right\},\frac1{y+\lfloor 1/x\rfloor}\right)\right., \\
    \left.\left(\varepsilon_1,\left\lfloor\left\{\frac1{x}\right\}\right\rfloor\varepsilon_1+\varepsilon_0 \pmod{4}\right)\right).
\end{multline*}

Let us see that, similar to the map $T_1$
used in the classical Lagrange spectrum, the map $\tilde{T}_1$ preserves an invariant smooth density. In the construction of the classical spectrum \cite{ArnouxFlotGeodesique}, one uses $T_1:(0,1)\times(0,1)\to [0,1)\times(0,1)$ by
\begin{equation*}
    T_1(x,y)=\left(\left\{\frac1{x}\right\},\frac1{y+\lfloor 1/x\rfloor}\right)
\end{equation*}
and $T:S\to\overline{S}$ where $S=\{(x,y)\in\R^2:0<x<1,0<y<1/(1+x)\}$ by
\begin{equation*}
    T(x,y)=\left(\left\{\frac1{x}\right\},x-x^2y\right).
\end{equation*}
In particular
\begin{equation*}
    T_1([0;a_0,a_1,\dots],[0;b_1,b_2,\dots])=([0;a_1,a_2,\dots],[0;a_0,b_1,b_2,\dots]).
\end{equation*}

Define the real valued map $\tilde{f}:(0,1)^2\times\mathcal{I}\to\R$ by 
\begin{equation*}
    \tilde{f}\left((x,y),(\varepsilon_0,\varepsilon_1)\right)=\begin{cases}
        1/x+y, & \text{if $\varepsilon_1=0$}, \\
        2(1/x+y), & \text{if $\varepsilon_1=2$}, \\
        \frac{1/x+y}{4}, & \shortstack[l]{if $\varepsilon_1$ odd \\ and $\varepsilon_0$ even}, \\
        \max\left\{\frac{1/x+y}{4},2\left(\frac{1}{1+y}+\frac{1}{1/x-1}\right)\right\}, &\shortstack[l]{if $\varepsilon_1,\varepsilon_0$ odd \\ and $\varepsilon_0=\varepsilon_1$},  \\
        \max\left\{\frac{1/x+y}{4},2\left(\frac{1}{1-y}-\frac{1}{1/x+1}\right)\right\}, &\shortstack[l]{if $\varepsilon_1,\varepsilon_0$ odd \\ and $\varepsilon_0\neq\varepsilon_1$},  \\
    \end{cases}
\end{equation*}

If $h:\overline{S}\to[0,1]^2$ is given by $h(x,y)=(x,y/(1-xy))$, then $h$ is a conjugation between $T$ and $T_1$. It is known that $T$ preserves the Lebesgue measure $\Leb$, because $T$ is a bijection (up to a null set) and has derivative
\begin{equation*}
    \begin{pmatrix}
        -1/x^2 & 0 \\
        1-2xy & -x^2
    \end{pmatrix}
\end{equation*}
which has determinant 1. Thus, $T_1$ preserves the smooth measure $h_{*}(\Leb)$, which will be given by the density $d\mu=\frac{1}{(1+xy)^2}dxdy$. Similarly, we can define the map $\tilde{T}:S\times\mathcal{I}\to\overline{S}\times\mathcal{I}$ by
\begin{equation*}
    \tilde{T}\left((x,y),(\varepsilon_0,\varepsilon_1)\right)=\left(T(x,y),\left(\varepsilon_1,\left\lfloor\left\{\frac1{x}\right\}\right\rfloor\varepsilon_1+\varepsilon_0 \pmod{4}\right)\right).
\end{equation*}
Using the same argument as before, we can show that $\tilde{T}$ preserves Lebesgue measure and by using the conjugation $\tilde{h}((x,y),(\varepsilon_0,\varepsilon_1))=(h(x,y),(\varepsilon_0,\varepsilon_1))$ we have that $\tilde{T}_1$ preserves the smooth measure $\tilde{h}_{*}(\Leb)$. 

\subsection{Reduction to a horseshoe}

By \eqref{eq:equal_dimensions}, it suffices to prove that the function
\[
t\in\R\longmapsto \dim_H\left(L_{\varphi,f,\tilde{\Sigma}}\cap(-\infty,t)\right)\in[0,1]
\]
is continuous. To this end, we shall apply the continuity theorem for dynamical spectra associated with horseshoes (\Cref{thm:continuity}).

Given $N\in\N_{>0}$, observe that $\tilde{\Lambda}(N)=C(N)\times C(N)\times\mathcal{I}$ is a maximal invariant set of $\tilde{T}_1$. Moreover, it is the product of the Gauss--Cantor set $C(N)$ and the dynamically defined Cantor set $C(N)\times\mathcal{I}$. Since the derivative of $\tilde{T}_1$ is hyperbolic on $\tilde{\Lambda}(N)$, it follows that $(\tilde{T}_1|_{\tilde{\Lambda}(N)},\tilde{\Lambda}(N))$ is a horseshoe.

The local stable and unstable manifolds of this horseshoe are segments of vertical and horizontal lines. Since these are everywhere transverse to the gradient of the functions defining $\tilde{f}$, we conclude that $\tilde{f}\in\mathcal{R}_{\varphi,\tilde{\Lambda}(N)}$, where $\mathcal{R}_{\varphi,\tilde{\Lambda}(N)}$ is defined in \eqref{eq:R_varphi_Lambda}.

It remains to verify the dimension formula hypothesis in \Cref{thm:continuity}. Every subhorseshoe of $\tilde{\Lambda}(N)$ decomposes, by the spectral theorem, into subhorseshoes that are products of stable and unstable Cantor sets. One of these Cantor sets is precisely a Gauss--Cantor set which is non-essentially affine by \Cref{prop:geometric_properties_prop1}. Therefore, \Cref{thm:gugu_dimension_formula} yields the required dimension formula for every subhorseshoe. Hence all the hypotheses of \Cref{thm:continuity} are satisfied.

Because of the formula for the height function $\tilde{f}$, we can easily see that if $(x,y)\in C(N)\times C(N)$, then $\tilde{f}((x,y),(\varepsilon_0,\varepsilon_1))\leq 4N$. As it is a well known fact that $\dim_H(C(N))>\frac{1}{2}$ for all $N\geq 2$, we have $\dim_H(\tilde{\Lambda}(N))>1$, and by \Cref{prop:dimension_of_subimages}, we get $d_0(4N)=\dim_H(L_{\varphi,f,\tilde{\Sigma}}\cap (-\infty,4N))=1$. In particular, choosing $N=2$, the function $d_0(t)=\dim_H(L_{\varphi,f,\tilde{\Sigma}}\cap (-\infty,t))$ is continuous for all $t\geq 8$. On the other hand, if the continued fraction of $x=[a_0;a_1,\dots]$ starts with some $a_0\geq 16N$, we get that $\tilde{f}((x,y),(\varepsilon_0,\varepsilon_1))>4N$. Therefore, points with Lagrange values at most $4N$ can only have finitely many coefficients $a_n\geq 16N$. A consequence of this is that they must have the same Lagrange value as some point in $\tilde{\Lambda}(16N-1)$ and, conversely, any Lagrange value of points in $\tilde{\Lambda}(16N-1)$ is an element of $L_{\varphi,f,\tilde{\Sigma}}$. As consequence, using \eqref{eq:equality_after_t_2}, we have for all $t\leq 4N$
\begin{gather}
    L_{\varphi,f,\tilde{\Sigma}}\cap (-\infty,t]=L_{\tilde{T}_1,\tilde{f},\tilde{\Lambda}(16N-1)}\cap (-\infty,t], \nonumber \\
    L_{\frac{1}{2}+i\R}\cap[2,t]=L_{\varphi,f,\tilde{\Sigma}}\cap [2,t]=L_{\tilde{T}_1,\tilde{\Lambda}(16N-1),\tilde{f}}\cap [2,t]. \label{eq:equality_between_3_spectra}
\end{gather}
Hence  $\dim_H(L_{\varphi,f,\tilde{\Sigma}}\cap (-\infty,t))=\dim_H(L_{\tilde{T}_1,\tilde{f},\tilde{\Lambda}(31)}\cap (-\infty,t))$ for $t\leq 8$. We can now extract \Cref{thm:main}, \Cref{itm:thmcont} and \Cref{itm:thmcont2} as a consequence of \Cref{thm:continuity}. 

The argument to prove that $L_{\frac{1}{2}+i\R}$ is closed is the following. Since the map $\tilde{f}$ is continuous, we have that $L_{\tilde{T}_1,\tilde{f},\tilde{\Lambda}(16N-1)}\cap [2,4N]$ is closed, so by \eqref{eq:equality_between_3_spectra} one has that $L_{\frac{1}{2}+i\R}\cap[2,4N]$ is also closed, which shows that the whole $L_{\frac{1}{2}+i\R}$ is closed. Similarly, since $\tilde{f}$ is the maximum of smooth functions which have transverse gradient to the vertical and horizontal directions, that is, transverse to the stable and unstable manifolds of $\tilde{\Lambda}(16N-1)$. By \Cref{thm:perfect_set} we have that $L_{\tilde{T}_1,\tilde{f},\tilde{\Lambda}(16N-1)}^{\prime}=L_{\tilde{T}_1,\tilde{\Lambda}(16N-1),\tilde{f}}^{\prime\prime}$, so by \eqref{eq:equality_between_3_spectra} we have  $L_{\frac{1}{2}+i\R}^{\prime}\cap[2,4N]=L_{\frac{1}{2}+i\R}^{\prime\prime}\cap[2,4N]$ so taking $N$ to infinity proves that $L_{\frac{1}{2}+i\R}$ is perfect. Moreover, since $L_{\varphi,f,\tilde{\Sigma}}\cap(-\infty,2)$ is discrete, \Cref{thm:perfect_set} also implies that $\max\{t\in\R:\dim_H(L_{\tilde{T}_1,\tilde{f},\tilde{\Lambda}(16N-1)}\cap(-\infty,t))=0\}=2$, so $\dim_H(L_{\tilde{T}_1,\tilde{\Lambda}(31),\tilde{f}}\cap(2,2+\varepsilon))>0$ for all $\varepsilon>0$. Therefore by \eqref{eq:equality_between_3_spectra} we get that $\dim_H(L_{\frac{1}{2}+i\R}\cap[2,2+\varepsilon])=L_{\tilde{T}_1,\tilde{f},\tilde{\Lambda}(31)}\cap [2,2+\varepsilon])>0$. Since $L_{\frac{1}{2}+i\R}\cap(-\infty,2)$ is discrete this finishes the proof of \Cref{thm:main}.

\subsection{Alternative (explicit) proof that $d_0(2+\varepsilon)>0$}

The fact that $d_0(2+\varepsilon)>0$ is a direct consequence of \Cref{thm:perfect_set}. In this section, we will build explicit dynamically defined Cantor sets contained in $L_{\frac{1}{2}+i\R}\cap(-\infty,2+\varepsilon)$. These Cantor sets will be obtained as images by $f$ of a certain bi-infinite subshifts we are going to describe now.

The basis for our construction is the fact that the real quadratic irrationals $x_k=[0;\overline{(4,1)^k,1,3,1}]$ are such that $k(\frac{1}{2}+ix_k)$ forms a decreasing sequence converging to 2. 

\begin{lemma}
    Given $k\geq 2$, define $x_k=[0;\overline{(4,1)^k,1,3,1}]$. Then, we have that
    \begin{multline}\label{eq:x_k_value}
        k\left(\frac{1}{2}+ix_k\right)=\max\big([1;\overline{1,3,1,(4,1)^k}]+[0;\overline{(4,1)^k,3,1,1}]), \\
        2([0;1,\overline{1,(1,4)^k,1,3}]+[0;2,\overline{1,(4,1)^k,1,3}])\big).
    \end{multline}
    In particular
    \begin{equation}\label{eq:x_k_value_bound}
        2<k\left(\frac{1}{2}+ix_k\right)<2+\frac{1}{2^{2k-2}}.
    \end{equation}
\end{lemma}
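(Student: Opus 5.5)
The plan is to compute $k(\frac12+ix_k)$ directly from \Cref{lem:formula_for_k}, that is, as $\limsup_n \eta(n)$, and then bound the two surviving terms by elementary continued fraction comparisons. First we show that this limsup exceeds $2$, so that \Cref{lem:formula_for_k} applies with equality. Then we identify where the maximum of $\eta(n)$ over one period is attained.

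\textbf{Step 1: residues of $q_n$.} Since $x_k$ is purely periodic with period word $w=(4,1)^k,1,3,1$ of length $2k+3$, we first compute the residues $(q_{n-1},q_n)\bmod 4$ along the period, using $q_n=a_nq_{n-1}+q_{n-2}$. From the remark on $[0;\overline{4,1}]$, inside the block $(4,1)^k$ the pairs alternate between $q_n\equiv 0$ and $q_n\equiv 1\pmod 4$ (up to the starting pair). Only the last few indices, around the tail $1,3,1$ and the re-entry into the next block, produce odd/odd pairs or residues $\equiv 2\pmod 4$. We will check that the pair of residues returns to its initial value after one or two periods, so $\eta(n)$ is eventually periodic.

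\textbf{Step 2: compare the cases of $\eta(n)$.} For indices with $q_n\equiv 0$ and odd $q_n$, the values $\alpha_{n+1}+\beta_{n+1}$ and $(\alpha_{n+1}+\beta_{n+1})/4$ are at most about $5+\tfrac14$ divided by $4$, or at most $1+$ small when $a_{n+1}=1$. Here we must be careful: when $q_n\equiv 0$ and $a_{n+1}=1$, the value is $[1;\dots]+[0;\dots]$. The first term in \eqref{eq:x_k_value} should be exactly such a value, attained at the index where $\alpha_{n+1}=[1;\overline{1,3,1,(4,1)^k}]$. The second term, $2([0;1,\dots]+[0;2,\dots])$, should come from one of the odd/odd cases. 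For instance $2\bigl(\frac1{1+\beta}+\frac1{\alpha-1}\bigr)$ with $\alpha=[3;1,\dots]$ gives $\frac{1}{\alpha-1}=[0;2,1,\dots]$ and $\frac1{1+\beta}=[0;1,\beta^{-1}\dots]$, matching the shape $[0;2,\overline{1,(4,1)^k,1,3}]$. We verify the remaining indices are dominated: values with $a_{n+1}=4$ and $q_n$ odd give roughly $(4.2+0.8)/4<2$, and residue-$2$ indices we check are absent or small. In the limsup, $\beta_{n+1}$ becomes the periodic reverse expansion, which is what yields the purely periodic expressions in \eqref{eq:x_k_value}.

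\textbf{Step 3: the bound \eqref{eq:x_k_value_bound}.} We compare each term with the limiting configuration $k=\infty$, namely $[1;1,3,1,\overline{4,1}]$-type values and $2([0;1,1,\overline{1,4}]+[0;2,1,\overline{4,1}])$. We evaluate these explicitly and check that the relevant one equals $2$ in the limit. This is consistent with $\alpha=\frac12(\sqrt2-1)$ giving $k=2$, since $[0;\overline{4,1}]$ appears. The difference between the finite-$k$ value and the limit is then controlled by \eqref{eq:compare_continued_fractions} and the preceding lemma: the expansions agree for about $2k$ terms, and the parity of the first disagreement fixes the sign, giving $>2$. Since $q$-denominators grow at least like $2^{(n-1)/2}$, and the block $(4,1)$ more than doubles them, the error is $<2^{-(2k-2)}$.

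The main obstacle is Step 2: correctly tracking the mod $4$ residues through the tail $1,3,1$ and confirming which odd/odd case (equal or unequal residues) occurs, so that the exact term matches \eqref{eq:x_k_value} and no other index exceeds it. A secondary obstacle is that \Cref{lem:formula_for_k} requires $k>2$ a priori. This is handled because \eqref{eq:inequality_k_less_than_2} already gives $k\ge\limsup\eta>2$, after which the equality case applies.
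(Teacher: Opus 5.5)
Your plan is correct and is essentially the paper's proof. Within each period $q_n \bmod 4$ runs through $(1,(0,1)^k,1,0)$, so the residue $2$ never occurs and the only odd/odd pair is $(q_{2k},q_{2k+1})\equiv(1,1)$; evaluating $\eta$ over one period then leaves exactly the two terms of \eqref{eq:x_k_value}, whose $k=\infty$ limits both equal $2$ (the paper gets this from $[0;1+u,v]+[0;1,u,v]=1$), and the $2^{-(n-1)}$ comparison lemma gives the upper bound.

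One refinement concerns the strict bound $>2$, which the paper leaves implicit and which you rightly note is needed to use \Cref{lem:formula_for_k} with equality. The parity of the first disagreement does not settle the sign by itself. In the first term, $[0;\overline{(4,1)^k,3,1,1}]$ exceeds $[0;\overline{4,1}]$ (disagreement at depth $2k+1$), while $[1;\overline{1,3,1,(4,1)^k}]$ falls below $[1;1,3,1,\overline{4,1}]$ (disagreement at depth $2k+4$). So you need a short magnitude check via \eqref{eq:compare_continued_fractions}: the excess is roughly $0.05/q_{2k}^2$ and the deficit roughly $0.009/q_{2k}^2$, where $q_{2k}$ is the denominator of $[0;(4,1)^k]$. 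As a sanity check, for $k=2$ the first term equals exactly $\sqrt{4+70^{-2}}$.
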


\begin{proof}
    First, let us notice that because of the periodicity of the continued fraction of $x_k$, we actually have $k(\frac{1}{2}+ix_k)=\ell(\tilde{x}_k)$, where $\tilde{x}_k=((a_n)_{n\in\Z},(1,0))$ and $(a_n)_{n\in\Z}$ is the periodic sequence with period $(a_{1},\dots,a_{2k+3})=((4,1)^k,1,3,1)$. It is easy to see that if $q_m$ denotes the $m$--th convergent of the quadratic irrational $x_k$, then the sequence $(q_m \pmod{4})_{m\geq 0}$ is periodic with period $(1,(0,1)^k,1,0)$. Let us denote $\varepsilon_m\equiv q_m\pmod{4}$ for $m\geq 0$ where $0\leq\varepsilon_m<4$. We want to compute the heights $f(\varphi^m(\tilde{x}_k))$ for $0\leq m< 2k+3$. Note that
    \begin{equation*}
        \varphi^{m}(\tilde{x}_k)=\big((a_{n+m})_{n\in\Z},(\varepsilon_{m},\varepsilon_{m+1})\big).
    \end{equation*}
    
    We have to consider several cases. 
    \begin{itemize}
        \item  If $m=2s$ with $0\leq s< k-1$, then $(\varepsilon_m,\varepsilon_{m+1})=(1,0)$, so 
        \begin{align*}
            f(\varphi^{m}(\tilde{x}_k))=\alpha_{2s+2}+\beta_{2s+2}&=[1;(4,1)^{k-s-1},1,3,1,\dots]+[0;(4,1)^{s+1},3,1,\dots]\\
            &<[1;4]+[0;4]=\frac{3}{2}.
        \end{align*}
        \item If $m=2s+1$ with $0\leq s\leq k-1$ or $m=2k+2$, then $(\varepsilon_m,\varepsilon_{m+1})=(0,1)$, and since $a_n\leq 4$, we have
        \begin{equation*}
            f(\varphi^m(\tilde{x}_k))=\frac{\alpha_{m+2}+\beta_{m+2}}{4}<\frac{6}{4}=\frac{3}{2}.
        \end{equation*}
        \item If $m=2k+1$ then $(\varepsilon_{2k+1},\varepsilon_{2k+2})=(1,0)$, so
        \begin{align*}
            f(\varphi^{2k+1}(\tilde{x}_k))=\alpha_{2k+3}+\beta_{2k+3}&=[1;(4,1)^{k},1,3,1,\dots]+[0;3,1,(1,4)^{k},\dots]\\
            &<[1;4]+[0;3]=\frac{19}{12}.
        \end{align*}
        \item If $m=2k$, then $(\varepsilon_{2k},\varepsilon_{2k+1})=(1,1)$, so
        \begin{equation*}
            f(\varphi^{2k}(\tilde{x}_k))=\max\left(\frac{\alpha_{2k+2}+\beta_{2k+2}}{4},2\left(\frac{1}{1+\beta_{2k+2}}+\frac{1}{\alpha_{2k+2}-1}\right)\right)
        \end{equation*}
        where $\alpha_{2k+2}=[3;\overline{1,(4,1)^k,1,3}]$ and $\beta_{2k+2}=[0;\overline{1,(1,4)^k,1,3}]$. We have
        \begin{align*}
            \frac{1}{1+\beta_{2k+2}}+\frac{1}{\alpha_{2k+2}-1}&=[0;1,\overline{1,(1,4)^k,1,3}]+[0;2,\overline{1,(4,1)^k,1,3}] \\
            &>[0;1,1,1,4]+[0;2,1,4,1]=\frac{237}{238}>\frac{19}{24}>\frac{3}{4},
        \end{align*}
        so $f(\varphi^{2k}(\tilde{x}_k))=2([0;1,\overline{1,(1,4)^k,1,3}]+[0;2,\overline{1,(4,1)^k,1,3}])$.
        \item If $m=2k-2$ then $(\varepsilon_{2k-2},\varepsilon_{2k-1})=(1,0)$, so 
        \begin{equation*}
            f(\varphi^{2k-2}(\tilde{x}_k))=\alpha_{2k}+\beta_{2k}=[1;\overline{1,3,1,(4,1)^k}]+[0;\overline{(4,1)^k,3,1,1}].
        \end{equation*}
    \end{itemize}

    This proves \eqref{eq:x_k_value}. Finally we will use the elementary identity $[0;1+u,v]+[0;1,u,v]=1$ to prove \eqref{eq:x_k_value_bound}. Indeed, 
    \begin{multline*}
        [1;\overline{1,3,1,(4,1)^k}]+[0;\overline{(4,1)^k,3,1,1}]) \\
        <[1;1,3,1,\overline{4,1}]+[0;4,1,\overline{4,1}]+\frac{1}{2^{2k-2}}=2+\frac{1}{2^{2k-2}},
    \end{multline*}
    and
    \begin{multline*}
        2([0;1,\overline{1,(1,4)^k,1,3}]+[0;2,\overline{1,(4,1)^k,1,3}]) \\
        <2([0;1,1,\overline{1,4}]+[0;2,\overline{1,4}]+\frac{1}{2^{2k-1}})=2+\frac{1}{2^{2k-2}}.
    \end{multline*}
\end{proof}

In virtue of the previous lemma, we can define the following bi-infinite  Bernoulli shift 
\begin{equation*}
    \tilde{\Sigma}_k:=\left\{\big((\gamma_i)_{i\in\Z},(1,0)\big) :\gamma_i\in\{((4,1)^k,1,3,1),((4,1)^{k+1},1,3,1)\},\forall i\in\Z\right\}.
\end{equation*}
Because of the proof of the previous lemma and \eqref{eq:x_k_value_bound}, we have that $\ell(\tilde{x})<2+2^{-2k+2}$ for all $\tilde{x}\in\tilde{\Sigma}_k$.

In particular, given any $\varepsilon>0$, we let $k$ be so large so that $2^{-2k+2}<\varepsilon$ and thus
\begin{equation*}
    \ell_{\varphi,f}(\tilde{\Sigma}_k)\subset L_{\varphi,f,\tilde{\Sigma}}\cap(2,2+\varepsilon)=L_{\frac{1}{2}+i\R}\cap(2,2+\varepsilon).
\end{equation*}
By \Cref{prop:dimension_of_subimages} we have that $\dim_H(\ell_{\varphi,f}{\Sigma}_k)=\min\{1,2\cdot \dim_H(K)\}=2\cdot \dim_H(K)$ where $K$ is the regular Gauss-Cantor set
\begin{equation*}
     K =\{[0;(4,1)^{k_1},1,3,1,(4,1)^{k_2},1,3,1,\dots]:k_i\in\{k,k+1\},\forall i\geq0\}.
\end{equation*}

Therefore
\begin{equation*}
    d_0(2+\varepsilon)=\dim_H(L_{\frac{1}{2}+i\R}\cap(2,2+\varepsilon))\geq \dim_H(\ell_{\varphi,f}(\tilde{\Sigma}_k))=2\cdot \dim_H(K)>0.
\end{equation*}

\begin{remark}
This computation gives a explicit lower estimate for the modulus of continuity of $d_0(2+\varepsilon)$. 
\end{remark}

\bibliographystyle{plain}
\bibliography{bibliography}

\begin{thebibliography}{10}

\bibitem{ArnouxFlotGeodesique}
Pierre Arnoux.
\newblock Le codage du flot g\'eod\'esique sur la surface modulaire.
\newblock {\em Enseign. Math. (2)}, 40(1-2):29--48, 1994.

\bibitem{Bombieri}
Enrico Bombieri.
\newblock Continued fractions and the {M}arkoff tree.
\newblock {\em Expo. Math.}, 25(3):187--213, 2007.

\bibitem{Bug}
Yann Bugeaud, Gerardo Gonz\'alez~Robert, and Mumtaz Hussain.
\newblock Metrical properties of {H}urwitz continued fractions.
\newblock {\em Adv. Math.}, 468:Paper No. 110208, 94, 2025.

\bibitem{Cassels}
J.~W.~S. Cassels.
\newblock {\em An introduction to {D}iophantine approximation}, volume No. 45
  of {\em Cambridge Tracts in Mathematics and Mathematical Physics}.
\newblock Cambridge University Press, New York, 1957.

\bibitem{CerqueiraMatheusGugu}
Aline Cerqueira, Carlos Matheus, and Carlos~Gustavo Moreira.
\newblock Continuity of {H}ausdorff dimension across generic dynamical
  {L}agrange and {M}arkov spectra.
\newblock {\em J. Mod. Dyn.}, 12:151--174, 2018.

\bibitem{SecondLagrangeSpectra}
Hao Cheng, Harold Erazo, Carlos~Gustavo Moreira, and Thiago Vasconcelos.
\newblock On the geometry of the second lagrange spectra, 2026.
\newblock ArXiv:2602.09228.

\bibitem{SandoelThesis}
Sandoel de~Brito~Vieira.
\newblock {\em Markov and Lagrange spectra}.
\newblock PhD thesis, Instituto de Matem\'{a}tica Pura e Aplicada, 2020.

\bibitem{Florek}
Jan Florek.
\newblock Roots of markoff quadratic forms as strongly badly approximable
  numbers, 2011.
\newblock ArXiv:1106.1844.

\bibitem{Ford}
Lester~R. Ford.
\newblock On the closeness of approach of complex rational fractions to a
  complex irrational number.
\newblock {\em Trans. Amer. Math. Soc.}, 27(2):146--154, 1925.

\bibitem{Gurwood}
C.~Gurwood.
\newblock {\em Diophantine approximation and the Markov chain}.
\newblock PhD thesis, New York University, 1976.

\bibitem{Harcos}
G.~Harcos.
\newblock Milyen távolságokra eshet egy valós szám az összes racionális
  számtól? [what are the possible distances between a real number and all the
  rationals?], 1996.
\newblock Undergraduate thesis, Eötvös Loránd University.

\bibitem{Hur}
A.~Hurwitz.
\newblock \"uber die {E}ntwicklung complexer {G}r\"ossen in {K}ettenbr\"uche.
\newblock {\em Acta Math.}, 11(1-4):187--200, 1887.

\bibitem{SergeLang}
Serge Lang.
\newblock {\em Introduction to {D}iophantine approximations}.
\newblock Springer-Verlag, New York, second edition, 1995.

\bibitem{Phasetransition}
D.~Lima and C.~G. Moreira.
\newblock Phase transitions on the {M}arkov and {L}agrange dynamical spectra.
\newblock {\em Ann. Inst. H. Poincar\'{e} C Anal. Non Lin\'{e}aire},
  38(5):1429--1459, 2021.

\bibitem{ML}
Davi Lima and Carlos~Gustavo Moreira.
\newblock Dynamical characterization of initial segments of the {M}arkov and
  {L}agrange spectra.
\newblock {\em Monatsh. Math.}, 199(4):817--852, 2022.

\bibitem{LMV}
Davi Lima, Carlos~Gustavo Moreira, and Christian Camilo~Silva Villamil.
\newblock Continuity of fractal dimensions in conservative generic markov and
  lagrange dynamical spectra, 2025.

\bibitem{M1}
Carlos~Gustavo Moreira.
\newblock Geometric properties of the {M}arkov and {L}agrange spectra.
\newblock {\em Ann. of Math. (2)}, 188(1):145--170, 2018.

\bibitem{CartesianProduct}
Carlos~Gustavo Moreira.
\newblock Geometric properties of images of cartesian products of regular
  {C}antor sets by differentiable real maps.
\newblock {\em Math. Z.}, 303(1):Paper No. 3, 19, 2023.

\bibitem{NIT}
Hitoshi Nakada, Shunji Ito, and Shigeru Tanaka.
\newblock On the invariant measure for the transformations associated with some
  real continued-fractions.
\newblock {\em Keio Engrg. Rep.}, 30(13):159--175, 1977.

\bibitem{PT}
Jacob Palis and Floris Takens.
\newblock {\em Hyperbolicity and sensitive chaotic dynamics at homoclinic
  bifurcations}, volume~35 of {\em Cambridge Studies in Advanced Mathematics}.
\newblock Cambridge University Press, Cambridge, 1993.
\newblock Fractal dimensions and infinitely many attractors.

\bibitem{Perron}
Oskar Perron.
\newblock {\"U}ber die approximation irrationaler zahlen durch rationale. ii.
\newblock {\em Sitzungsber. Heidelb. Akad. Wiss.}, (8), 1921.

\bibitem{SergioGuguETDS}
Sergio~Augusto Roma\~na Ibarra and Carlos Gustavo~T. de~A.~Moreira.
\newblock On the {L}agrange and {M}arkov dynamical spectra.
\newblock {\em Ergodic Theory Dynam. Systems}, 37(5):1570--1591, 2017.

\bibitem{Sch}
Asmus~L. Schmidt.
\newblock Diophantine approximation of complex numbers.
\newblock {\em Acta Math.}, 134:1--85, 1975.

\bibitem{SchmidtCforms}
Asmus~L. Schmidt.
\newblock On {$C$}-minimal forms.
\newblock {\em Math. Ann.}, 215:203--214, 1975.

\bibitem{Vulakh}
L.~Ja. Vulah.
\newblock The {M}arkov spectrum of imaginary quadratic fields {$Q(i\surd D)$},
  where {$D\not\equiv 3({\rm mod}\ 4)$}.
\newblock {\em Vestnik Moskov. Univ. Ser. I Mat. Meh.}, 26(6):32--41, 1971.

\bibitem{Zagier}
Don Zagier.
\newblock On the number of {M}arkoff numbers below a given bound.
\newblock {\em Math. Comp.}, 39(160):709--723, 1982.

\end{thebibliography}

\end{document}